\definecolor{darkblue}{rgb}{0.0,0.0,0.6}
\definecolor{darkgreen}{rgb}{0.0,0.6,0.0}
\numberwithin{table}{section}    
\numberwithin{figure}{section}   
\numberwithin{equation}{section} 
\newtheorem{assumption}[theorem]{Assumption}
\newtheorem{remark}[theorem]{Remark}
\newcommand{\dz}{\delta z}
\newcommand{\ff}{\forall\,}
\newcommand{\xx}{\widetilde x}
\newcommand{\eps}{\varepsilon}
\newcommand{\e}{\varepsilon}
\newcommand{\ee}{\quad \text{as }\e \searrow 0}
\newcommand{\yy}{\bar y}
\newcommand{\ld}{L^2(D)}
\newcommand{\hde}{H^s(D\setminus \bar E)}
\newcommand{\hd}{H^1_0(D)}
\newcommand{\gbe}{\bar g_\eps}
\newcommand{\gbec}{\{\gbe=\e z\}}
\newcommand{\gbecc}{\{x \in D \setminus \bar E: \gbe(x)=\e z\}}
\newcommand{\gbs}{\bar g_{\mathfrak{sh}}}
\renewcommand{\ss}{{\mathfrak{sh}}}
\newcommand{\gbsc}{\{\bar g_{\mathfrak{sh}}=0\}}
\newcommand{\ybs}{\bar y_{\mathfrak{sh}}}
\newcommand{\dme}{D \setminus \bar E}
\newcommand{\dbme}{\bar D \setminus E}
\newcommand{\li}{L^\infty(D)}
\renewcommand{\o}{\omega}
\newcommand{\ogg}{\O_{g}}
\newcommand{\ogs}{\O_{\gbs}}
\begin{document}

\title{necessary conditions for the optimal control of a shape optimization problem with non-smooth pde constraints
}
\date{\today}
\author{L.\,Betz\footnotemark[1]}
\renewcommand{\thefootnote}{\fnsymbol{footnote}}
\footnotetext[1]{Faculty of Mathematics,  University of W\"urzburg,  Germany}
\renewcommand{\thefootnote}{\arabic{footnote}}

\maketitle


\begin{abstract}
This paper is concerned with the derivation of necessary  conditions for the optimal shape of a design problem governed by a non-smooth PDE. The main particularity thereof is the lack of differentiability of the nonlinearity  in the state equation, which, at the same time, is solved on an unknown domain. We follow the functional variational approach introduced in \cite{pen} where the set of admissible shapes is parametrized by a large class of continuous mappings. 
 It has been recently established \cite{p1}  that each parametrization associated to an  optimal shape  is the limit of a sequence of global optima of minimization problems with convex admissible set consisting of functions. Though non-smooth, these problems allow for the derivation of an   optimality system equivalent with the first order necessary optimality condition \cite{p2}. In the present manuscript we let  the approximation parameter vanish therein. 
 The  final necessary conditions for the non-smooth shape optimization problem consist of  an adjoint equation, a limit gradient equation that features a measure concentrated on the boundary of the optimal shape and, because of the non-smoothness, an inclusion  that involves its Clarke subdifferential.\end{abstract}

\begin{keywords}
Optimal control of non-smooth PDEs,  shape optimization, topological variations, functional variational approach, Hamiltonian systems, curvilinear integrals \end{keywords}
\begin{AMS}
49Q10, 35Q93, 49K20.
\end{AMS}
  \section{Introduction}

The purpose of this paper is to derive  necessary  conditions for the optimal shape of the following 
non-smooth optimal design problem
\begin{equation}\tag{$P_\O$}\label{p_sh}
 \left.
 \begin{aligned}
  \min_{\O \in \OO, E \subset \O} \quad & \int_E (y_{\O}(x)-y_d(x))^2 \; dx+\alpha\,\int_{\O}\,dx, 
\\     \text{s.t.} \quad & 
  \begin{aligned}[t]
    -\laplace y_{\O} + \beta(y_{\O})&=f \quad \text{a.e.\,in }\O,
   \\y_{\O}&=0  \quad \text{on } \partial \O.\end{aligned} \end{aligned}
 \quad \right\}
\end{equation}
The admissible control set $\OO$ consists of unknown  subdomains of a given, fixed domain $D$. These are the so-called \textit{admissible shapes}. In this manuscript, they are generated by a class of continuous functions, see \eqref{f_s} below. The holdall domain $D\subset \R^2$ is bounded, of class $C^{1,1}$, and $f \in L^2(D)$. The symbol $-\laplace: \hd \to H^{-1}(D)$ denotes the Laplace operator in the distributional sense; note that $\hd$  is the closure of the set {$ C^\infty_c(D) $} w.r.t.\,the $H^1(D)-$norm.
The desired state $y_d$ is an $L^2-$function, which is defined on the  observation set  $E$ (Assumption \ref{assu:E}). The parameter $\alpha$ in the objective is supposed to satisfy $\alpha \geq 0$.  
The essential feature of \eqref{p_sh} is that the mapping $\beta$ appearing in the governing PDE is locally Lipschitz continuous and directionally differentiable, but \textit{not} necessarily \textit{differentiable}; see Assumption \ref{assu:stand}.\ref{it:stand2}-\ref{it:stand3}  for details. We aim at establishing an optimality system for \eqref{p_sh} consisting of an adjoint equation, a gradient inclusion and, because of the non-smoothness, a relation  that involves the Clarke subdifferential of $\beta$, see Theorem \ref{thmm0}. In this context, we also derive sign conditions for the adjoint state and the optimal state.


We underline the complexity of  \eqref{p_sh}. While shape optimization problems exhibit similarities  with optimal control problems \cite{troe}, the crucial difference and difficulty here comes from the fact   that the admissible control set consists of variable geometries \cite{pir,sz, top_book}. Such problems are highly nonconvex and  their investigation is far away from being standard.
When addressing \eqref{p_sh}, one faces an additional challenge, namely its \textit{non-smooth character}.
Even in the case of classical non-smooth control problems (where the domain is fixed), the application of traditional optimization methods is excluded,  as the standard KKT theory cannot be directly employed once the differentiability of the control-to-state map is missing. If the domain is fixed, this may be however overcome by smoothening the problem \cite{bt} or, in certain cases \cite{mcrf,paper}, by direct methods.

Let us put our work into perspective.
Shape optimization problems with non-smooth constraints have been investigated at a theoretical level mostly 
with respect to existence of optimal shapes \cite{dm1, dm2} and   sensitivity analysis \cite{ nsz, hs,sz, sok}.  While there is an increasing number of contributions   concerning  optimal shape design problems governed by VIs, see \cite{dm1,dm2, nsz, hs, ylw, lsw} and the references therein, there are no papers known to the author that address the case where the governing equation is a \textit{non-smooth PDE}. In \cite{ylw, lsw}, the authors   resort to smoothening techniques and optimality systems in qualified form are obtained just for the smoothened problem \cite{ylw}, or, for the original problem \cite{lsw}, but  only under certain rather restricitve assumptions  on the converging sequences. If smoothening is not involved, optimality conditions for the non-smooth shape optimization problem  do not involve an adjoint equation, unless linearity w.r.t.\,direction is assumed \cite{nsz}. Otherwise, these are stated just in a primal form \cite{hs}, i.e., the respective optimality condition only asserts the non-negativity of the shape derivative of the reduced objective functional in feasible directions.
As in most of the literature, the  approaches in all the aforementioned  papers (see also the references therein) are based on variations of the geometry. One of the  most common notions in this context are the 
shape derivative \cite{sz} and the topological derivative \cite{top_book}.

A more novel technique to derive optimality conditions in qualified form, where general functional variations instead of geometrical ones are involved, can be found in \cite{oc_t}. Therein, an optimal design problem governed by a linear PDE with Neumann boundary conditions is investigated. By means of the implicit parametrization theorem combined with  Hamiltonian systems \cite{it_h},  the equivalence of the shape and topology optimization problem with an optimal control problem in function space is established, provided that the set of admissible geometries is generated by a certain class of continuous functions. The respective minimization problem   is then  amenable to the derivation of optimality conditions by a  classical Lagrange multipliers approach. 
The idea that the admissible shapes are parametrized by so called shape functions turned out to be successful  in numerous papers \cite{pen, t_jde, oc_t, nt, nt2, to, mt_new2, mt_new1, hmt}.
 The variations used in all these works have no prescribed geometric form (as usual in the literature) and the  methodology therein provides a unified analytic framework allowing for both boundary and topological variations.


The recent work \cite{p1}  was a first essential step towards the derivation of  optimality systems  for the optimal shape associated to \eqref{p_sh}. 
Therein, we employed the aforementioned functional variational method, that is, we switched from the shape optimization problem \eqref{p_sh} to an optimal control problem where the admissible set consists of functions, see \eqref{p_shh} below.  The main result consisted of showing that each parametrization associated to the optimal shape of \eqref{p_sh} is the limit of global minimizers of  \eqref{p10} (Corollary \ref{cor:os}).  
Having established an optimality system for \eqref{p10}, one may  pass to the limit therein and obtain necessary conditions 
for the optimal control of \eqref{p_shh}, and then transfer these to \eqref{p_sh}. This is pricisely the scope of the present manuscript. 
It continues the investigations from  \cite{p1}, by letting the parameter $\e>0$ vanish  in the optimality conditions for the approximating minimization problem \eqref{p10}; we point out  that these were already derived in \cite{p2}, see Lemma \ref{lem:tool} and Theorem \ref{thm} below.

The reformulation of \eqref{p_sh} (for certain admissible sets  $\OO$, see \eqref{o} below) in terms of   a control problem in  function space reads as follows \begin{equation}\tag{$P$}\label{p_shh}
 \left.
 \begin{aligned}
  \min_{g \in \FF_{\mathfrak{sh}}} \quad & \int_E (y(x)-y_d(x))^2 \; dx+\alpha\,\int_{D}1-H(g)\,dx, 
\\     \text{s.t.} \quad & 
  \begin{aligned}[t]
    -\laplace y + \beta(y)&=f \quad \text{in }\O_g,
   \\y&=0  \quad \text{on } \partial \O_g, \end{aligned} \end{aligned}
 \quad \right\}
\end{equation}where $H:\R \to\{0,1\}$ is the Heaviside function, see \eqref{h} below. The set of admissible shape functions is  \begin{equation}\label{f_s}
\begin{aligned}
\FF_{\mathfrak{sh}}&:=\{g \in C^2(\bar D):g(x) \leq 0\ \forall\,x\in E, \ |\nabla g(x)|+|g(x)|>0 \ \forall\,x\in D, \\&\  \ \ \ \  \ \qquad \qquad \quad g(x)>0 \ \forall\,x\in\partial D,\   g(x)>0 \ \forall\,x \not \in \clos \ogg\} , \end{aligned}\end{equation}
see \cite[Rem.\,2.7]{p1} for more comments regarding this definition. 
For $g\in \FF_{\ss}$, $\O_g$  describes the following open subset of the holdall domain $D$:
\begin{equation}\label{def:og}
 \Omega_g:=\interior\{x \in D: g(x) \leq 0\} .
\end{equation}
We note that $\O_g$ may have many connected components. The admissible shape (domain)  that we use in the definition of $\OO$ below, see \eqref{o}, is the component that contains the subdomain $E$; its existence is guaranteed by the first inequality in \eqref{f_s}. 
Due to the particular properties of $g \in \FF_{\ss}$, the level set $\{g=0\}$ describes the smooth boundary of the subdomain defined in \eqref{def:og}, see Lemma \ref{fs} below. Moreover, each component of  $\{g=0\}$ is parametrized by the solution of a Hamiltonian system, see \eqref{ham}; in particular, $\partial \O_g$ is a finite union of closed disjoint $C^2$ curves, without self intersections, that do not intersect $\partial D$ \cite[Prop.\,2]{pen}. 
With \eqref{f_s} and \eqref{def:og} at hand, we may now define  the admissible set for the shape optimization problem \eqref{p_sh} as 
\begin{equation}\label{o}
\OO:=\{\text{the component of }\O_g \text{ that contains the set }E: g\in \FF_{\mathfrak{sh}}\}.\end{equation} 
We point out that 
 the family of admissible domains is rich, its elements may  be multiply-connected, that is,  the approach we discuss in this paper is related to topological optimization too.

The optimal control problem \eqref{p_shh} preserves the non-smoothness, that is, the standard adjoint calculus is still excluded. The admissible set consisting of functions $\FF_{\mathfrak{sh}}$ is non-convex, while the control does not appear on the right-hand side of the non-smooth PDE, but in the definition of the variable domain on which this is solved. 
 In order to deal with the challenges brought up by  \eqref{p_shh}, we use a fixed domain type method \cite{nt,nt2} and extend  the state equation on the whole reference domain $D$. By proceeding like this, one preserves the non-smooth character, and arrives, for $\e>0$ small, fixed, at the following approximating optimal control problem \cite{p1}:

\begin{equation}\tag{$P_\eps$}\label{p10}
 \left.
 \begin{aligned}
  \min_{g \in   \FF} \quad & \int_E (y(x)-y_d(x))^2 \; dx+\alpha\,\int_{D} (1-H_\eps(g))(x)\,dx+\frac{1}{2}\,\|g-\bar g_{\mathfrak{sh}}\|_{\WW}^2  \\   \text{s.t.} \quad & 
  \begin{aligned}[t]
   -\laplace y + \beta(y)+\frac{1}{\eps}H_\eps(g)y&=f +\eps g\quad \text{in }D,
   \\y&=0\quad \text{ on } \partial D, \end{aligned} 
 \end{aligned}
 \quad \right\}
\end{equation}where $\eps>0$ is a fixed parameter. The mapping $H_\e$ is the regularization of the Heaviside function, cf.\,\eqref{reg_h} below. Here, $\WW$ is the Hilbert space $\ld \cap \hde$, $s\in ( 1,2]$, endowed with the norm 
\[\|\cdot\|_{\WW}^2:=\|\cdot\|_{\ld}^2+\|\cdot\|_{H^s(D\setminus \bar E)}^2,\]
and 
\begin{equation}\label{f_tilde}
 \FF:=\{g \in \WW: g \leq 0\text{ a.e.\,in }E\}.
\end{equation}We underline the fact that the set $\FF$ from \eqref{f_tilde} is a convex subset of a Hilbert space. It is  a suitable replacement of the non-convex set $\FF_{\mathfrak{sh}}$ thanks to an essential density property \cite[Sec.\,3]{p1} that allows us to bridge the gap between these two sets.
In the objective of \eqref{p10}, $\bar g_{\mathfrak{sh}}$ is a fixed local optimum of \eqref{p_shh}. That is, $\gbs \in \FF_{\mathfrak{sh}}$, and the condition $s\leq 2$ implies  that $\gbs \in \WW$ too; this ensures that the last term in the objective of \eqref{p10} is well-defined.  As shown by Theorem \ref{thm:cor}, \eqref{p10} is constructed in such a way  that $\gbs$ arises as the limit of a sequence of local optima of \eqref{p10} \cite{barbu84}. Moreover, the resulting  approximating control problems \eqref{p10} are  amenable to the derivation of strong stationary optimality conditions, cf.\,Theorem \ref{thm}. 
For more comments regarding the particular structure of \eqref{p10}, we refer the reader to \cite[Sec.\,4]{p1}.

The paper is organized as follows. In section \ref{sec:2} we gather the standing assumptions and all the results from the previous contributions \cite{p1,p2} that are needed in the present manuscript. These   include Theorem \ref{thm:cor} and the optimality system associated to \eqref{p10}, see Lemma \ref{lem:tool}, where we intend to let the parameter $\e$ vanish. 

As it turns out, the limit analysis of the sequence $\{H_\e'(\gbe)\}$ appearing in the gradient equation  \eqref{lem:grad_f0} requires the most effort; here,  $\gbe$ denotes a local optimum of \eqref{p10}. While in the particular situation $\gbe=\gbs$ the respective passage to the limit  is rather standard (Remark \ref{rem:conv}), the general setting we deal with is far more involved.
 It involves the description of trajectories corresponding to $\gbe$ via Hamiltonian systems, cf.\,Proposition \ref{prop:per}. Section \ref{sec:grad} is entirely dedicated to their study, including a careful analysis of their  properties  as $\e$ approaches 0. 
The findings therein are applicable for general sequences of mappings that converge in $C^2$ towards an arbitrary mapping from $\FF_{\ss}$ (not necessarily $\gbs$). 
 It turns out that the elements of the converging sequence inherit the essential properties of the limit function on $D \setminus \bar E$, see Lemma \ref{lem:g_e}. To each one of them we can associate subdomains, that, for $\e$ sufficiently small, preserve the topology (the number of holes and components is the same as in the case of $\O_{\gbs} \setminus \bar E$, see Remark \ref{rem:top}).
 Corollary \ref{int_conv0}  establishes the convergence  behaviour of curvilinear integrals associated to optima of the approximating problem \eqref{p10}. This is the final key contribution of section \ref{sec:grad} that makes the passage to the limit in the third term in \eqref{eq_d-e} possible. 
  
 In the final section \ref{pas} we prove our main result by  first deriving an optimality system for  local optima of \eqref{p_shh}. This is contained in Theorem \ref{thmm}. It consists of an adjoint equation, a subdifferential inclusion featuring the Clarke subdifferential of the non-smoothness and an inclusion that corresponds to the limit gradient equation. Moreover, we make use of the fact that we need the standing Assumption \ref{yd_f0} anyway for the approximation property in Theorem \ref{thm:cor}. The alternative requirements in \eqref{f01} and \eqref{f00} then allow us to derive sign conditions for the optimal state and the adjoint state, cf.\,\eqref{p>=0} and \eqref{p<=0}. Finally, we transfer the necessary conditions from Theorem  \ref{thmm} to the shape optimization problem \eqref{p_shh} and arrive at the main result of the present manuscript, which is Theorem \ref{thmm0}.

  \section{Standing assumptions and preliminary results}\label{sec:2}
In this section we  collect the standing assumptions and those results from the previous contributions \cite{p1,p2} that are needed in the present paper.

  \begin{assumption}[The non-smoothness]\label{assu:stand}
 \begin{enumerate}
 \item\label{it:stand2}The  function  $\beta: \R \to \R$ is   
monotone increasing and {locally} Lipschitz continuous {in the following sense: F}or all $M>0$, there exists 
  a constant $L_M>0$ such that
  \begin{equation*}
   |\beta(z_1) - \beta(z_2)| \leq L_M |z_1 - z_2| \quad \forall\, z_1, z_2 \in [-M,M] .
  \end{equation*}
  \item\label{it:stand3}The mapping $\beta$ is directionally differentiable at every point, i.e., 
  \begin{equation*}
   \Big|\frac{\beta(z + \tau \,\dz) - \beta(z)}{\tau} - \beta'(z;\dz)\Big| \stackrel{\tau \searrow 0}{\longrightarrow} 0 \quad \forall \, z,\dz \in \R.
  \end{equation*}
  \end{enumerate}
\end{assumption}

In the rest of the paper, one tacitly supposes that Assumption \ref{assu:stand} is always fulfilled without mentioning it every time. By Assumption \ref{assu:stand}.\ref{it:stand2}, it is straight forward to see that the Nemytskii operator $\beta:\li  \to \li$ is well-defined. Moreover, this is Lipschitz continuous on bounded sets in the following sense:  for every  $M > 0$, 
  there exists $L_M > 0$ so that
  \begin{equation}\label{eq:flip}
   \|\beta(y_1) - \beta(y_2)\|_{L^q(D)} \leq L_M \, \|y_1 - y_2\|_{L^q(D)} \quad \forall\, y_1,y_2 \in \clos{B_{\li}(0,M)},\ \forall\, 1\leq q \leq \infty.
  \end{equation}
 
\begin{assumption}[The observation set]\label{assu:E}
The  set $E\subset D$  is a  subdomain with boundary of measure zero. Moreover, $\dist(\clos E,\partial D)>0,$ where \[\dist(\clos E,\partial D):=\min_{(x_1,x_2)\in \clos E \times \partial D} \dist(x_1,x_2).\]\end{assumption}

\begin{definition}The non-linearity $H_\eps:\R \to [0,1]$ is defined as follows 
 \begin{equation}\label{reg_h}
H_\eps(v):=\left\{\begin{aligned}0,&\quad \text{if }v\leq 0,
\\\frac{v^2(3\eps-2v)}{\eps^3},&\quad \text{if }v\in (0,\eps),
\\1,&\quad \text{if }v\geq \eps.
\end{aligned}\right.\end{equation}
\end{definition}
\\We note that                                            
$H_\e$ is the regularization of the 
Heaviside function $H:\R \to [0,1]$, which is defined as
 \begin{equation}\label{h}
H(v):=\left\{\begin{aligned}0,&\quad \text{if }v\leq 0,
\\1,&\quad \text{if }v>0.
\end{aligned}\right.\end{equation}

For later purposes, we compute here its derivative:
 \begin{equation}\label{reg_h'}
H_\eps'(v):=\left\{\begin{aligned}0,&\quad \text{if }v\leq 0,
\\\frac{6v(\eps-v)}{\eps^3},&\quad \text{if }v\in (0,\eps),
\\0,&\quad \text{if }v\geq \eps.
\end{aligned}\right.\end{equation}
Now we continue with some essential results that were established previously \cite{p1,p2}.
\begin{lemma}[Properties of admissible shapes and $\O_g$, {\cite[Lem.\,2.9]{p1}}]\label{fs}
Let $g \in \FF_{\mathfrak{sh}}$ and denote by $\O \in \OO$ the relevant component of $\O_g,$ that is, the component that contains $E$. Then,
\begin{enumerate}
\item \label{class} $ \O \text{ is a domain of class }C^2$;
\item \label{class2} $\partial \O_g=\{x \in D: g(x) = 0\}$ and $\O_g=\{x \in D: g(x) < 0\};$    
\item \label{ls} $\mu\{x \in D: g(x) = 0\}=0$.
\end{enumerate}
\end{lemma}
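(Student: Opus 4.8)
The plan is to extract everything from the non-degeneracy constraint $|\nabla g(x)| + |g(x)| > 0$ built into $\FF_{\mathfrak{sh}}$: it forces $\nabla g(x) \neq 0$ at every $x \in D$ with $g(x) = 0$, so that $0$ is a regular value of $g$ on its zero set and all three claims reduce to the local structure of a regular level set of a $C^2$ map. The boundary condition $g > 0$ on $\partial D$ keeps this level set compactly inside $D$: since $\partial D$ is compact and $g$ is continuous on $\bar D$, one has $g > 0$ on a full neighborhood of $\partial D$, hence $N := \{x \in D : g(x) = 0\}$ is bounded away from $\partial D$ and closed, thus compact, and all boundary computations may be carried out in $\R^2$.

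First I would prove assertion 2. The inclusion $\{g < 0\} \subseteq \O_g$ is clear, since $\{g < 0\}$ is open and contained in $\{g \leq 0\}$. For the converse, if some $x_0$ with $g(x_0) = 0$ belonged to $\O_g = \interior\{g \leq 0\}$, then $g \leq 0$ on a whole ball about $x_0$; but the first-order expansion $g(x_0 + t\,\nabla g(x_0)) = t\,|\nabla g(x_0)|^2 + o(t)$ with $\nabla g(x_0) \neq 0$ produces points with $g > 0$ arbitrarily close to $x_0$, a contradiction. Hence $\O_g = \{g < 0\}$. For the boundary, continuity gives $\partial \O_g \subseteq \{g = 0\}$, while conversely at any $x_0 \in N$ the same expansion along $-\nabla g(x_0)$ yields points of $\{g < 0\} = \O_g$ converging to $x_0$, so $x_0 \in \clos(\O_g) \setminus \O_g = \partial \O_g$; thus $\partial \O_g = N$.

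Next, for assertions 1 and 3 I would apply the implicit function theorem at each point of $N$. Because $g \in C^2(\bar D)$ and $\nabla g \neq 0$ on $N$, every point of $N$ has a neighborhood in which $N$ is the $C^2$ graph of a single-variable function, i.e.\ an embedded $C^2$ curve; the component $\O$ of $\O_g = \{g < 0\}$ containing $E$ is open, connected, and by assertion 2 lies locally on the side $\{g < 0\}$ of this curve, whence $\partial \O$ is a $C^2$ one-manifold and $\O$ is a domain of class $C^2$ (one may also invoke the Hamiltonian parametrization of the components of $\{g=0\}$ from \cite[Prop.\,2]{pen}). For assertion 3, covering the compact set $N$ by finitely many such graph neighborhoods and using that a $C^1$ graph has two-dimensional Lebesgue measure zero yields $\mu(N) = 0$.

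I expect no deep obstacle here; the single point deserving care is that the regularity condition $|\nabla g| + |g| > 0$ is exactly what excludes the degenerate possibility of $\{g \leq 0\}$ acquiring interior points on its own zero level (a flat touching of the graph at height zero). This transversality is precisely what is needed to pass from $\{g \leq 0\}$ to its interior $\{g < 0\}$ in assertion 2 and to guarantee that $N$ is a genuine $C^2$ curve rather than a thick set in assertions 1 and 3.
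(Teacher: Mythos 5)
Your proof is correct. Note that the paper itself gives no proof of this lemma: it is quoted verbatim from \cite[Lem.\,2.9]{p1}, and in that line of work the structure of the level set $\{g=0\}$ is obtained globally, via the implicit parametrization theorem and the Hamiltonian system of \cite[Prop.\,2]{pen} (the same machinery restated here as Proposition \ref{prop:per}), which represents each component of $\{g=0\}$ as a periodic $C^2$ trajectory of \eqref{ham}. Your route is the purely local counterpart of the same mechanism: the non-degeneracy $|\nabla g|+|g|>0$ makes $0$ a regular value of $g$ on $N=\{x\in D: g(x)=0\}$, the first-order expansion along $\pm\nabla g(x_0)$ correctly rules out zero-level points in $\interior\{g\leq 0\}$ and shows every zero is a boundary point of $\{g<0\}$ (incidentally, your argument shows that the last condition $g>0$ outside $\clos\ogg$ in \eqref{f_s} is automatic once the non-degeneracy holds), and the implicit function theorem plus a finite cover of the compact set $N$ by graph neighborhoods delivers assertions 1 and 3. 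The only point worth spelling out slightly more is the one-sidedness in assertion 1: near $x_0\in\partial\O$ the local sublevel side $\{g<0\}$ is connected, and since points of $\O$ accumulate at $x_0$, that entire side belongs to the single component $\O$ — this is the step that turns the local graph structure into the statement that $\O$ is a domain of class $C^2$. What the global Hamiltonian approach of \cite{pen,p1} buys beyond your argument — and what the present paper genuinely needs later, in Proposition \ref{prop:per} and Section \ref{sec:grad} — is the global picture: finitely many closed disjoint curves without self-intersections and explicit periodic parametrizations; your local argument deliberately avoids this and is entirely sufficient for the three claims as stated.
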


\begin{lemma}[ {\cite[Lem.\,4.11]{p1}}]\label{lem:cp}
Let $\{g_\e\} \subset \FF$ and $g \in \FF_{\mathfrak{sh}}$ so that 
\[g_\e \to g \quad \text{in }\ld \cap L^\infty(D \setminus \bar E)\quad \text{as }\e \searrow 0.\]
Then, for each compact subset $K$ of $\ogg$, 
there exists $\e_0>0$, independent of $x$, so that
\begin{equation}\label{g_k}
g_\e \leq 0 \quad  \ae K,\ \forall\,\e\in(0,\e_0].\end{equation}
\end{lemma}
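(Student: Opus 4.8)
The plan is to exploit that $\ogg$ is the \emph{open} set $\O_g$, so that by Lemma \ref{fs}.\ref{class2} we have $\ogg=\{x\in D:g(x)<0\}$, and any compact $K\subset\ogg$ stays uniformly away from the zero level set of the continuous function $g\in\FF_{\ss}\subset C^2(\bar D)$. Concretely, the first step is to set
\[
c:=-\max_{x\in K}g(x),
\]
which is well defined and strictly positive: $g$ is continuous and $K$ is compact, so the maximum is attained at some $x^\ast\in K\subset\{g<0\}$, whence $\max_K g=g(x^\ast)<0$. Thus $g\le -c<0$ on all of $K$, and this single gap $c$ will drive the whole argument and produce a threshold that does not depend on $x$.

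Next I would decompose $K$ into the part lying in the observation set and the part lying outside it, up to a Lebesgue-null interface. Since $\partial E$ has measure zero by Assumption \ref{assu:E}, it suffices to establish $g_\e\le0$ a.e.\ on $K\cap E$ and a.e.\ on $K\setminus\bar E$ separately. On $K\cap E$ nothing has to be proved beyond the membership $g_\e\in\FF$: by the definition \eqref{f_tilde} of $\FF$ we have $g_\e\le0$ a.e.\ in $E$, hence a.e.\ on $K\cap E$, for \emph{every} $\e$. On $K\setminus\bar E\subset\dme$ I would instead invoke the $L^\infty$-convergence $g_\e\to g$ on $\dme$: choosing $\e_0>0$ so small that $\|g_\e-g\|_{L^\infty(\dme)}\le c$ for all $\e\in(0,\e_0]$, one gets, a.e.\ on $K\setminus\bar E$,
\[
g_\e = g+(g_\e-g)\le -c+\|g_\e-g\|_{L^\infty(\dme)}\le 0.
\]
Combining the two pieces and discarding the null set $K\cap\partial E$ yields $g_\e\le0$ a.e.\ in $K$ for all $\e\in(0,\e_0]$, with $\e_0$ depending only on $c$ (hence on $K$ and $g$) and on the convergence rate, i.e.\ independent of $x$, as required.

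The substantive point — and the only place where care is needed — is the dichotomy between the two mechanisms controlling the two pieces of $K$. Near $\partial\O_g=\{g=0\}$ the $L^\infty$-convergence alone is useless, because $g$ itself vanishes there and $g_\e$ could remain positive; it is precisely the compactness of $K$ inside the \emph{open} set $\O_g$ that rules out such boundary points and produces the uniform gap $c>0$. Symmetrically, inside $E$ the $L^\infty$-convergence is simply not available (only $\ld$-convergence holds there), so the inequality $g_\e\le0$ cannot be obtained analytically and must instead be read off from the structural constraint built into $\FF$. Reconciling these two incompatible sources of control on the two regions, while absorbing the interface $\partial E$ into the ``a.e.'', is the heart of the proof; once the gap $c$ is in hand everything else is routine.
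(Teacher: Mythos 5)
Your proof is correct and is essentially the argument behind the cited result: the compactness gap $c=-\max_{K}g>0$ (valid since $\O_{g}=\{g<0\}$ by Lemma \ref{fs}.\ref{class2}), the constraint $g_\e\leq 0$ a.e.\ in $E$ built into $\FF$ via \eqref{f_tilde}, and the $L^\infty(D\setminus\bar E)$ convergence on $K\setminus\bar E$, with the interface $K\cap\partial E$ absorbed into the null set by Assumption \ref{assu:E}. Note that the present paper does not reproduce a proof of this lemma but quotes it from \cite[Lem.\,4.11]{p1}; your decomposition of $K$ into $K\cap E$, $K\setminus\bar E$, and the null boundary, together with the observation that the $L^2(D)$ convergence is not even needed, is exactly the natural route and fills that gap correctly.
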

 In order to be able to  state the main results of \cite{p1,p2}, we need to recall some definitions.

\begin{definition}[The control-to-state map associated to the state equation in \eqref{p_shh}, {\cite[Def.\,4.9]{p1}}]\label{S}
We define
\begin{equation}
\SS:g \in \FF_{\mathfrak{sh}} \mapsto y_g \in  H^1_0(\O_g) \cap H^2(\O_g), \end{equation}
where $y_g$ solves the state equation in \eqref{p_shh} on the component  of $\O_g$ containing $E$.\end{definition}

\begin{definition}[Local optimum of \eqref{p_shh}]\label{def_sh}
We say that $\bar g_{\mathfrak{sh}} \in \FF_{\mathfrak{sh}}$ is locally optimal for \eqref{p_shh} in the $\ld$ sense, if there exists $r>0$ such that 
\begin{equation}\label{loc_opt_shh}
\JJ(\bar g_{\mathfrak{sh}}) \leq \JJ(g) \quad \ff g \in \FF_{\mathfrak{sh}} \text{ with }\|g-\bar g_{\mathfrak{sh}}\|_{\ld}\leq  r,
\end{equation}where $$\JJ(g):=\int_E (\SS( g)(x)-y_d(x))^2 \; dx+\alpha\,\int_{D} (1-H(g))(x)\,dx.$$\end{definition}

\begin{definition}
The control-to-state map associated to the state equation in \eqref{p10} is denoted by 
 \[S_\e:\ld \to  H_0^1(D) \cap H^2(D).\]

 \end{definition}
For each $g\in \ld$, it holds  
\begin{equation}\label{lem:S}
\|S_\e(g)\|_{H^1_0(D) \cap C(\bar D)} \leq c_1+c_2\,\|g\|_{\ld},
\end{equation}
where $c_1,c_2>0$ are independent of $\eps$, $\beta$, and  $g$. This result has been established in \cite[Lem.\,4.5]{p1}.

\begin{definition}[Local optimum of the approximating control problem, {\cite[Def.\,5.1]{p1}}]\label{def_e}
Let $\e>0$ be fixed and $\bar g_{\mathfrak{sh}} \in \FF_{\mathfrak{sh}}$.
We say that $\bar g_\eps \in \FF$ is locally optimal for \eqref{p10} in the $\ld$ sense, if there exists $r>0$ such that 
\begin{equation}\label{loc_opt_e}
j_\eps(\bar g_\eps) \leq j_\eps(g) \quad \ff g \in \FF \text{ with }\|g-\bar g_\e\|_{\ld}\leq  r,
\end{equation}where $$j_\eps(g):=\int_E (S_\eps( g)(x)-y_d(x))^2 \; dx+\alpha\,\int_{D} (1-H_\eps(g))(x)\,dx+\frac{1}{2}\,\|g-\bar g_{\mathfrak{sh}}\|_{\WW}^2.$$ \end{definition}

\begin{assumption}\label{yd_f0}
For the desired state we require   $ y_d \in H^1_0(E)$ with $\laplace y_d \in L^2(E).$
Moreover, \begin{equation}\label{f01}
f \geq \chi_E(-\laplace y_d+\beta(y_d))+\chi_{D\setminus E}\beta(0)\quad \ae D,\end{equation}or, alternatively, 
\begin{equation}\label{f00}
f \leq \chi_E(-\laplace y_d+\beta(y_d))+\chi_{D\setminus E}\beta(0)\quad \ae D.\end{equation}
\end{assumption}
\begin{theorem}[{Approximation property, {\cite[Thm.\,5.2]{p1}}}]\label{thm:cor}
Let $\bar g_{\mathfrak{sh}} \in \FF_{\mathfrak{sh}}$ be a local minimizer of \eqref{p_shh} in the sense of Definition \ref{def_sh}. Then, there exists a sequence of local minimizers $\{\bar g_\eps\}$ of \eqref{p10} such that 
\[\bar g_\eps \to \bar g_{\mathfrak{sh}} \quad \text{in }\WW, \text{ as }\eps \searrow 0.\]{Moreover,}\[S_\eps(\gbe) \weakly \SS(\bar g_{\mathfrak{sh}})\quad \text{in }\hd, \text{ as }\eps \searrow 0.\]
\end{theorem}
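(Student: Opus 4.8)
\emph{The plan is to} run the Tikhonov regularization scheme of Barbu \cite{barbu84}, taking advantage of the quadratic penalty $\tfrac12\|g-\gbs\|_\WW^2$ built into $j_\eps$ and \emph{centered exactly at} the local optimum $\gbs$ we want to approximate. First I would localize: let $r>0$ be the radius from the local optimality \eqref{loc_opt_shh} of $\gbs$, and for each fixed $\eps>0$ introduce the auxiliary problem of minimizing $j_\eps$ over $\{g\in\FF:\|g-\gbs\|_\ld\le r\}$. This admissible set is convex, closed and bounded in $\ld$, while the penalty renders $j_\eps$ coercive on $\WW$; combining the weak lower semicontinuity of the penalty, the compact embedding of $\hde$ into $C(\dbme)$ (valid since $s>1$), the boundedness of $H_\eps$, and the a priori bound \eqref{lem:S} together with the continuity of $S_\eps$ from \cite{p1}, the direct method furnishes a global minimizer $\gbe$ of this localized problem.

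Next I would set up the energy comparison. Because $\gbs\in\FF_{\ss}\subset\FF$ and $\|\gbs-\gbs\|_\ld=0\le r$, the point $\gbs$ is feasible, so $j_\eps(\gbe)\le j_\eps(\gbs)$. Writing $j_\eps(\gbe)=T_\eps+V_\eps+\tfrac12\|\gbe-\gbs\|_\WW^2$ with the nonnegative tracking and volume terms $T_\eps,V_\eps$, I would first check that $j_\eps(\gbs)\to\JJ(\gbs)$: the volume part converges by dominated convergence, since $H_\eps(\gbs)\to H(\gbs)$ pointwise off the null set $\{\gbs=0\}$ (Lemma \ref{fs}.\ref{ls}), and the tracking part converges because $S_\eps(\gbs)\weakly\SS(\gbs)$ in $\hd$ by \cite{p1}, whence strongly in $L^2(E)$. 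Consequently $\{\gbe\}$ is bounded in $\WW$, so along a subsequence $\gbe\weakly\tilde g$ in $\WW$ with $\tilde g\in\FF$. As the $H^s$-part of $\|\cdot\|_\WW$ controls only $\dme$, this gives $\gbe\to\tilde g$ uniformly on $\dbme$ but merely weakly in $L^2(E)$; I would note that the latter is harmless, since the sign constraint forces $H_\eps(\gbe)=0$ a.e.\ on $E$, and $\gbe$ enters the state equation on $E$ only through the vanishing term $\eps\gbe$.

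The decisive estimate is then the liminf inequality $\liminf_\eps(T_\eps+V_\eps)\ge\JJ(\gbs)$. Granting it, the comparison $T_\eps+V_\eps+\tfrac12\|\gbe-\gbs\|_\WW^2\le j_\eps(\gbs)$ yields
\begin{equation*}
\limsup_\eps\tfrac12\|\gbe-\gbs\|_\WW^2\le\JJ(\gbs)-\liminf_\eps(T_\eps+V_\eps)\le 0,
\end{equation*}
so that $\gbe\to\gbs$ strongly in $\WW$ and $\tilde g=\gbs$. Strong $\WW$-convergence makes the localization constraint $\|g-\gbs\|_\ld\le r$ strictly inactive for all small $\eps$, so each such $\gbe$ is a local minimizer of the unconstrained problem \eqref{p10} in the sense of Definition \ref{def_e}. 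The remaining assertion $S_\eps(\gbe)\weakly\SS(\gbs)$ in $\hd$ then follows from the state-convergence result of \cite{p1}, now applied along $\gbe\to\gbs$ in $\ld\cap\lde$.

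The main obstacle is precisely this liminf inequality, i.e.\ the passage to the limit in the tracking term $T_\eps$ through $S_\eps(\gbe)$. In contrast to the fixed comparison function $\gbs\in\FF_{\ss}$, the weak-$\WW$ limit $\tilde g$ is a priori only an element of the convex set $\FF$, so $\{\tilde g=0\}$ need not be a null set and $\O_{\tilde g}$ need not be a $C^2$ shape; hence $\SS(\tilde g)$ and $\JJ(\tilde g)$ may fail to be defined, and one cannot naively invoke the lower semicontinuity of $\JJ$. Resolving this is exactly where the fixed-domain penalization analysis of \cite{p1} enters: the coefficient $\tfrac1\eps H_\eps(\gbe)$ blows up where $\tilde g>0$ and forces the limit state to vanish there while solving $-\laplace y+\beta(y)=f$ where $\tilde g<0$; identifying this limit rigorously requires the uniform bound \eqref{lem:S}, the compact-set sign information of Lemma \ref{lem:cp}, the null-set property of Lemma \ref{fs}.\ref{ls}, and the density bridge between $\FF$ and $\FF_{\ss}$ of \cite[Sec.\,3]{p1}, after which the local optimality \eqref{loc_opt_shh} of $\gbs$ delivers $\liminf_\eps(T_\eps+V_\eps)\ge\JJ(\gbs)$. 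Performing this identification in the presence of the merely directionally differentiable, non-smooth $\beta$ is where the real effort concentrates.
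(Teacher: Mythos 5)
Your proposal runs exactly the Barbu-type localization/penalization scheme that the paper's construction of \eqref{p10} was designed for and that it attributes to \cite{p1} and \cite{barbu84}: localize over $\{g\in\FF:\|g-\gbs\|_{\ld}\le r\}$, compare $j_\eps(\gbe)\le j_\eps(\gbs)\to\JJ(\gbs)$, extract strong $\WW$-convergence from the penalty term via the liminf inequality, and deactivate the localization constraint — the paper itself does not reprove the theorem here but imports it from \cite[Thm.\,5.2]{p1}. You also correctly isolate the one genuinely hard step (the liminf inequality through $S_\eps$ at the weak limit $\tilde g\in\FF$, which need not lie in $\FF_{\ss}$) together with the right tools to close it — the uniform bound \eqref{lem:S}, the sign information of Lemma \ref{lem:cp}, the null-level-set property of Lemma \ref{fs}, and the density bridge between $\FF$ and $\FF_{\ss}$ of \cite[Sec.\,3]{p1} — which are precisely the ingredients the paper credits for that proof.
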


\begin{corollary}[Approximation of  the optimal shape, {\cite[Cor.\,5.4]{p1}}]\label{cor:os}
Let $\O^\star \in \OO$ be an optimal shape for \eqref{p_sh}. Then, for each  $g^\star \in \FF_{\mathfrak{sh}}$ with $\O_{g^\star}=\O^\star$, there exists  
a sequence of global minimizers $\{\bar g_\eps\}$ of \eqref{p10} such that 
\[\bar g_\eps \to g^\star \quad \text{in }\WW, \text{ as }\eps \searrow 0.\]
{Moreover,}\[S_\eps(\gbe) \weakly \SS(g^\star)\quad \text{in }\hd, \text{ as }\eps \searrow 0.\]
\end{corollary}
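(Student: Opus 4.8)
The plan is to combine the reformulation that produced \eqref{p_shh} with a direct-method (Barbu-type) argument for the \emph{global} minimizers of the penalized problems \eqref{p10}, exploiting that the penalty in \eqref{p10} is centered exactly at the optimizer we wish to recover. First I would translate optimality from the geometric problem \eqref{p_sh} to the functional one \eqref{p_shh}: given the optimal shape $\O^\star \in \OO$ and any $g^\star \in \FF_{\ss}$ with $\O_{g^\star} = \O^\star$, the correspondence \eqref{o} between admissible shapes and shape functions, together with Lemma \ref{fs} (in particular $\mu\{g=0\}=0$, which identifies $\int_D (1-H(g))$ with the area of $\O_g$, and the agreement of the state via $\SS$), shows — as in the reformulation leading to \eqref{p_shh} — that $g^\star$ is a \emph{global} minimizer of $\JJ$ over $\FF_{\ss}$, hence in particular a local optimum in the sense of Definition \ref{def_sh}. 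I then fix $\gbs := g^\star$ in the objective $j_\e$ of \eqref{p10}, so that the penalty term reads $\tfrac12\|g - g^\star\|_\WW^2$.

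Next I would record the existence of a global minimizer $\gbe$ of \eqref{p10} for each fixed $\e>0$: $\FF$ is a closed convex subset of the Hilbert space $\WW$, $j_\e$ is coercive because of the quadratic penalty, and it is weakly lower semicontinuous — here the compact embedding of $\WW$ into $\ld$ (and into $\lde$, since $s>1$ in two dimensions) turns weak $\WW$-convergence into strong $\ld \cap \lde$-convergence, along which the tracking term and the smooth Heaviside term pass by continuity of $S_\e$, cf.\,\eqref{lem:S}. The core comparison is then $j_\e(\gbe) \le j_\e(g^\star)$. Because $g^\star \in \FF_{\ss} \subset \FF$ is admissible and the penalty vanishes at $g^\star$, the right-hand side equals $\int_E (S_\e(g^\star)-y_d)^2\,dx + \alpha\int_D (1-H_\e(g^\star))\,dx$, which converges to $\JJ(g^\star)$ as $\e \searrow 0$ (fixed-domain state convergence $S_\e(g^\star)\to\SS(g^\star)$ and dominated convergence $H_\e(g^\star)\to H(g^\star)$ via Lemma \ref{fs}). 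As all summands of $j_\e(\gbe)$ are nonnegative, this bounds $\{\gbe\}$ in $\WW$; passing to a subsequence, $\gbe \weakly \tilde g$ in $\WW$ and $\gbe \to \tilde g$ in $\ld \cap \lde$, precisely the hypothesis of Lemma \ref{lem:cp}.

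The decisive step is the passage to the limit, and it is the main obstacle: the variable geometry $\O_{\gbe}$ enters \eqref{p10} only implicitly, through the penalization $\tfrac1\e H_\e(g)y$ that drives $y$ to $0$ off $\{g\le 0\}$, and the weak limit $\tilde g$ lives a priori only in the convex relaxation $\FF$, not in $\FF_{\ss}$. Here I would invoke the convergence analysis of \cite{p1} underlying Theorem \ref{thm:cor} — the fixed-domain state convergence together with the density property bridging $\FF$ and $\FF_{\ss}$ — to identify $\tilde g$ as admissible for \eqref{p_shh}, to obtain $S_\e(\gbe)\weakly\SS(\tilde g)$ in $\hd$, and to pass to the liminf in the tracking and area terms. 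Combining this with weak lower semicontinuity of $\|\cdot\|_\WW$ yields
\[\JJ(\tilde g) + \tfrac12\|\tilde g - g^\star\|_\WW^2 \le \liminf_{\e\searrow 0} j_\e(\gbe) \le \limsup_{\e\searrow 0} j_\e(\gbe) \le \lim_{\e\searrow 0} j_\e(g^\star) = \JJ(g^\star).\]

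Global optimality of $g^\star$ gives $\JJ(\tilde g)\ge\JJ(g^\star)$, forcing $\|\tilde g - g^\star\|_\WW = 0$, hence $\tilde g = g^\star$ and equality throughout; in particular $\|\gbe - g^\star\|_\WW \to 0$, which upgrades weak to strong convergence in $\WW$. The usual subsequence–uniqueness argument then promotes this to convergence of the whole sequence, and $S_\e(\gbe)\weakly\SS(g^\star)$ in $\hd$ is the already-established state convergence. The only genuinely new content beyond Theorem \ref{thm:cor} is thus the shape-to-function translation and the comparison $j_\e(\gbe)\le j_\e(g^\star)$, which is what upgrades local minimizers to global ones.
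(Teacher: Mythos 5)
Your overall strategy is the right one and matches how this result is actually obtained (the corollary is recalled from \cite[Cor.\,5.4]{p1}; within the present paper it rests exactly on the two ingredients you identify, namely the shape-to-function translation of Proposition \ref{rem:equiv} and a Barbu-type adapted penalization centered at $g^\star$, with the comparison $j_\e(\gbe)\le j_\e(g^\star)$ supplying the upgrade from the local minimizers of Theorem \ref{thm:cor} to global ones). However, the decisive step contains a genuine gap as written. Your chain hinges on ``identifying $\tilde g$ as admissible for \eqref{p_shh}'' and then invoking $\JJ(\tilde g)\ge \JJ(g^\star)$. This cannot work: the weak $\WW$-limit $\tilde g$ lies only in the convex relaxation $\FF$ of \eqref{f_tilde}, so on $E$ it is a mere $L^2$-function with $\tilde g\le 0$ a.e., whereas membership in $\FF_{\ss}$, cf.\,\eqref{f_s}, requires $C^2(\bar D)$ regularity together with the open, nondegeneracy conditions $|\nabla g|+|g|>0$ in $D$ and $g>0$ on $\partial D$ --- none of which survives weak limits. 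Hence $\SS(\tilde g)$, and with it $\JJ(\tilde g)$, is simply undefined, and the density property of \cite[Sec.\,3]{p1} does not repair this by making $\tilde g$ admissible; its role is to compare \emph{values}, not to confer feasibility on the limit. What is actually needed is the inequality
\begin{equation*}
\liminf_{\e\searrow 0}\Bigl[\int_E (S_\e(\gbe)-y_d)^2\,dx+\alpha\int_D\bigl(1-H_\e(\gbe)\bigr)\,dx\Bigr]\;\ge\;\JJ(g^\star),
\end{equation*}
obtained by identifying the weak $\hd$-limit of the penalized states as the solution of the nonsmooth PDE on the limit domain attached to $\tilde g$ (fixed-domain method, via Lemma \ref{lem:cp} and the arguments behind Theorem \ref{thm:cor}) and then bridging to the infimum over $\FF_{\ss}$ by density --- without ever forming $\JJ(\tilde g)$. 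With this restated, your conclusion does follow: combining it with $\limsup_{\e} j_\e(\gbe)\le \lim_{\e} j_\e(g^\star)=\JJ(g^\star)$ forces the penalty term to vanish, i.e.\ $\|\gbe-g^\star\|_{\WW}\to 0$ directly (no separate weak-to-strong upgrade is needed).

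A secondary inaccuracy occurs in your existence argument: there is no compact embedding of $\WW=\ld\cap H^s(D\setminus\bar E)$ into $\ld$. Compactness is available only on $D\setminus\bar E$ (via $H^s(D\setminus\bar E)\embed\embed L^\infty(D\setminus\bar E)$ for $s>1$); on $E$, weak $\WW$-convergence yields only weak $L^2(E)$-convergence. The direct method still goes through, but for a different reason that should be made explicit: every competitor $g\in\FF$ satisfies $g\le 0$ a.e.\ in $E$, so $H_\e(g)=0$ a.e.\ in $E$ by \eqref{reg_h}, and the nonlinear dependence of the state on $g$ never sees the region lacking compactness, while the remaining dependence through the right-hand side $\e g$ is affine and passes under weak convergence. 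Apart from these two points, the skeleton --- existence of global minimizers of \eqref{p10}, the comparison at $g^\star$ with $j_\e(g^\star)\to\JJ(g^\star)$ (using $\mu\{g^\star=0\}=0$ from Lemma \ref{fs} and dominated convergence for $H_\e(g^\star)\to H(g^\star)$), and the subsequence-uniqueness argument for the full sequence --- is sound and is indeed the expected proof.
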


We continue by recalling the main results from \cite{p2}.
\begin{assumption}\label{assu:reg0}
For each nonsmooth point $z \in \R$ of the mapping $\beta:\R \to \R$, there exists $\delta>0$,  so that $\beta$
  is convex or  concave  on the interval $[z-\delta, z+\delta]$. Moreover, the set of non-smooth points $\NN$ is countable and $\beta$ is continuously differentiable outside the intervals $[z-\delta_z/4, z+\delta_z/4]$, where $z \in \NN$, and $\delta_z>0$ is the associated radius of convexity/concavity.\end{assumption}

\begin{lemma}[Optimality system for the approximating control problem, {\cite[Lem.\,3.4]{p2}}]\label{lem:tool} 
Let $\bar g_\e \in \FF$ be locally optimal for \eqref{p10} in the sense of Definition \ref{def_e} and denote by $\bar y_\e \in \hd \cap H^2(D)$ its associated state. Then, there exists an 
 adjoint state $p_\e \in \hd \cap H^2(D)$ and a multiplier $\zeta_\e \in \li$ so that 
 the optimality system is satisfied:
 \begin{subequations} \label{eq:lem}  \begin{gather}
-\laplace p_\e+\zeta_\e p_\e+\frac{1}{\eps}H_\eps(\bar g_\e)p_\e=2\chi_E(\bar y_\e-y_d) \ \text{ a.e.\,in }D, \quad p_\e=0 \text{ on }\partial D, \label{lem:adj}
\\\zeta_\e(x) \in [\min\{\beta_-'(\yy_\e(x)),\beta_+'(\yy_\e(x))\},\max\{\beta_-'(\yy_\e(x)),\beta_+'(\yy_\e(x))\}] \quad \text{a.e.\,in }D,\label{lem:clarke0}
\\( p_\eps[\eps-\frac{1}{\eps}H_\eps'(\bar g_\eps)\bar y_\eps]-\alpha H_\eps'( \bar g_\eps),h-\bar g_\eps)_{\ld}+(\bar g_\eps-\bar g_{\mathfrak{sh}},h-\bar g_\eps)_{\WW}\geq 0\quad \forall\,h\in \FF,\label{lem:grad_f0}
   \end{gather}\end{subequations}
   where, for an arbitrary $z\in \R$, the left and right-sided derivative of $\beta: \R \to \R$  are defined through
$\beta'_-(z) := -\beta'(z;-1)$ and $ \beta'_+(z) :=\beta'(z;1)$, respectively. Moreover, 
\eqref{lem:grad_f0} is equivalent to the following two relations
\begin{equation}\label{cone}
p_\e[\eps-\frac{1}{\eps}H_\eps'(\bar g_\e)\bar y_\e]-\alpha H_\eps'( \bar g_\e)+\bar g_\e-\bar g_{\mathfrak{sh}} \leq 0 \quad \ae E,
\end{equation}
 \begin{equation}\label{eq_d-e}
  ( p_\e[\eps-\frac{1}{\eps}H_\eps'(\bar g_\e)\bar y_\e]-\alpha H_\eps'( \bar g_\e),\phi)_{L^2(D\setminus E)}+(\bar g_\e-\bar g_{\mathfrak{sh}},\phi)_{L^2(D\setminus E)}+(\bar g_\e-\bar g_{\mathfrak{sh}},\phi)_{\hde}= 0  \end{equation}
   \end{lemma}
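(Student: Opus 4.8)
The plan is to read Lemma \ref{lem:tool} as a strong stationarity statement for the reduced problem $\min_{g\in\FF} j_\e(g)$ and to derive it by the standard programme for non-smooth semilinear control problems, exploiting that the set $\FF$ from \eqref{f_tilde} is convex. First I would show that the control-to-state map $S_\e$ is Hadamard directionally differentiable from $\ld$ into $\hd\cap H^2(D)$: for a direction $\delta g:=h-\gbe$ the sensitivity $\dy:=S_\e'(\gbe;\delta g)$ is the unique solution of the linearized equation
\begin{equation*}
-\laplace\dy+\beta'(\yy_\e;\dy)+\tfrac{1}{\e} H_\e(\gbe)\,\dy=\Big(\e-\tfrac{1}{\e} H_\e'(\gbe)\yy_\e\Big)\delta g\quad\text{in }D,\qquad \dy=0\text{ on }\partial D,
\end{equation*}
which is well posed by the monotonicity of $\beta'(\yy_\e;\cdot)$ (from Assumption \ref{assu:stand}.\ref{it:stand2}-\ref{it:stand3}), the a priori bound \eqref{lem:S}, and \eqref{eq:flip}; the coefficients $H_\e,H_\e'$ are smooth by \eqref{reg_h}, \eqref{reg_h'}. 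Since $H_\e$ is $C^1$ and $\|\cdot\|_\WW^2$ is smooth, local optimality of $\gbe$ (Definition \ref{def_e}) together with convexity of $\FF$ yields the primal variational inequality
\begin{equation*}
\big(2\chi_E(\yy_\e-y_d),\dy\big)_{\ld}-\alpha\big(H_\e'(\gbe),\delta g\big)_{\ld}+(\gbe-\gbs,\delta g)_\WW\ge0\qquad\forall\,h\in\FF,
\end{equation*}
where $\dy=S_\e'(\gbe;\delta g)$.

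Next I would introduce the multiplier $\zeta_\e\in\li$ by setting $\zeta_\e(x)=\beta'(\yy_\e(x))$ wherever $\beta$ is differentiable at $\yy_\e(x)$ and leaving it free in the Clarke interval \eqref{lem:clarke0} on the non-smooth set, and define $p_\e\in\hd\cap H^2(D)$ as the solution of the adjoint equation \eqref{lem:adj} for this $\zeta_\e$; the $H^2$-regularity is inherited because the zeroth-order coefficient $\zeta_\e+\tfrac{1}{\e} H_\e(\gbe)$ is bounded and nonnegative (the latter since $\beta$ is increasing and $H_\e\ge0$). Testing \eqref{lem:adj} with $\dy$ and the linearized equation with $p_\e$ and subtracting, the $\laplace$ and $\tfrac{1}{\e} H_\e(\gbe)$ terms cancel and one obtains
\begin{equation*}
\big(2\chi_E(\yy_\e-y_d),\dy\big)_{\ld}=\big(p_\e[\e-\tfrac{1}{\e} H_\e'(\gbe)\yy_\e],\delta g\big)_{\ld}+\int_D\big(\zeta_\e\,\dy-\beta'(\yy_\e;\dy)\big)p_\e\,dx.
\end{equation*}
Inserting this into the primal inequality reproduces precisely the inner product appearing in \eqref{lem:grad_f0}, up to the last coupling integral, which is the sole obstruction to the claimed system.

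The hard part is controlling this coupling integral on the active set $A:=\{x:\beta'_-(\yy_\e(x))\ne\beta'_+(\yy_\e(x))\}$, where $\beta'(\yy_\e;\cdot)$ is positively homogeneous but genuinely nonlinear, so no single linear multiplier can annihilate the integrand for all directions at once. Here I would invoke Assumption \ref{assu:reg0}: near each non-smooth point $\beta$ is convex or concave, which fixes the order of $\beta'_-,\beta'_+$ and hence, for $\zeta_\e$ in the Clarke interval, the sign of $\zeta_\e\,\dy-\beta'(\yy_\e;\dy)$ for \emph{every} $\dy$ (nonpositive in the convex case, nonnegative in the concave case). The crux is then to establish the matching sign of $p_\e$ on $A$ — nonnegative on the locally convex part, nonpositive on the locally concave part — via a maximum-principle and complementarity analysis of the adjoint equation \eqref{lem:adj}, so that the coupling integral is $\le0$ and may be discarded, strengthening the primal inequality to \eqref{lem:grad_f0}; the countability of $\NN$ and the $C^1$-regularity of $\beta$ off the shrunk intervals in Assumption \ref{assu:reg0} are used to make $\zeta_\e$ measurable and $A$ well structured. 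I expect this sign analysis (equivalently, a Mignot-type argument forcing $p_\e=0$ on the biactive subset where $\dy$ changes sign) to be the main technical obstacle.

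Finally I would prove the equivalence of \eqref{lem:grad_f0} with \eqref{cone} and \eqref{eq_d-e} by the standard decomposition of a variational inequality over $\FF$. Since $\FF$ imposes no constraint on $D\setminus E$, testing \eqref{lem:grad_f0} with $h=\gbe\pm\phi$ for $\phi$ supported in $D\setminus E$ turns the inequality into the equality \eqref{eq_d-e}, after splitting $(\gbe-\gbs,\phi)_\WW=(\gbe-\gbs,\phi)_{L^2(D\setminus E)}+(\gbe-\gbs,\phi)_{\hde}$ by definition of $\|\cdot\|_\WW$. On $E$ the admissible directions are $h-\gbe$ with $h\le0$, and because the $\hde$-inner product does not see $E$ (so only the $L^2(E)$ part of $(\cdot,\cdot)_\WW$ acts there), the usual sign argument for obstacle-type feasible cones yields the pointwise one-sided condition \eqref{cone}; conversely \eqref{cone} and \eqref{eq_d-e} recombine to \eqref{lem:grad_f0} for every $h\in\FF$.
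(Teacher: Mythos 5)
Your scaffolding is sound: the Hadamard directional differentiability of $S_\e$, the primal variational inequality over the convex set $\FF$, and the adjoint identity producing the coupling integral $\int_D\big(\zeta_\e\,\dy-\beta'(\yy_\e;\dy)\big)p_\e\,dx$ are all correct, and your final paragraph is essentially the argument the paper itself imports for the equivalence claim (the beginning of the proof of \cite[Thm.\,3.14]{p2}): zero-extensions of $\phi\in\hde$ give the equality \eqref{eq_d-e}, and sign variations on $E$ give \eqref{cone} --- note only that for the converse direction you also need the complementarity relation $\int_E\big(p_\e[\e-\tfrac{1}{\e}H_\e'(\gbe)\yy_\e]-\alpha H_\e'(\gbe)+\gbe-\gbs\big)\gbe\,dx=0$, which the test choices $h=0$ and $h=2\gbe$ deliver alongside \eqref{cone}.

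The genuine gap is your crux step: the claim that $p_\e\geq 0$ on the locally convex part of the active set and $p_\e\leq 0$ on the concave part, so that the coupling integral can be discarded. This is asserted via an unspecified ``maximum-principle and complementarity analysis'' and cannot hold in the generality of the lemma: the adjoint right-hand side $2\chi_E(\bar y_\e-y_d)$ is sign-indefinite, so no maximum principle attaches a sign to $p_\e$ on a prescribed set (the sign conditions the paper does obtain, \eqref{p>=0}/\eqref{p<=0}, require the extra structural Assumption \ref{yd_f0} and a comparison argument for the \emph{state}, not any complementarity in the adjoint system). Worse, the sign you need is the reverse of the one that is actually available: under pure convexity, strong stationarity (Theorem \ref{thm}, proved in \cite{p2} only under the additional hypothesis \eqref{esssup}) yields $p_\e\leq 0$ where $\yy_\e\in\NN$, which makes your integrand $\big(\zeta_\e\,\dy-\beta'(\yy_\e;\dy)\big)p_\e$ \emph{nonnegative} --- the direction used to pass from the optimality system back to B-stationarity, not the one you need to derive the system from the primal inequality. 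Indeed, if your sign analysis worked it would prove strong stationarity without \eqref{esssup} and without excluding concavity, contradicting the deliberate separation between Lemma \ref{lem:tool} (no sign condition on $p_\e$, min/max interval in \eqref{lem:clarke0}) and Theorem \ref{thm}. The route consistent with \cite{p2}, and mirrored by the paper's own limit analysis in step (II)(ii) of the proof of Theorem \ref{thmm}, is instead to smooth the nonlinearity: replace $\beta$ by $C^1$ regularizations $\beta_\delta$, apply the classical KKT calculus to the smoothed problem with $\zeta_\delta=\beta_\delta'(y_\delta)$ (so no coupling term ever arises), and pass $\delta\searrow 0$ using weak-$\star$ compactness in $\li$ together with the upper semicontinuity of the Clarke generalized derivative $\beta^\circ$ to place the limit multiplier in the interval \eqref{lem:clarke0}.
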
for all $\phi \in \hde.$\begin{proof}The optimality system \eqref{lem:adj}-\eqref{lem:grad_f0} has been established in \cite[Lem.\,3.4]{p2}. The last assertion is due to the arguments employed at the beginning of the proof of \cite[Thm.\,3.14]{p2}.\end{proof}

\begin{theorem}[Strong stationarity, {\cite[Thm.\,3.14, Corollary 4.2]{p2}}]\label{thm}
Assume that only convexity is allowed in Assumption \ref{assu:reg0} and that   
\begin{equation}\label{esssup}\esssup_{D\setminus \bar E} f <\beta(0).\end{equation}
Let $\{\bar g_\e\} \subset \FF$ be a sequence of local optima of \eqref{p10} that converges in $\WW$ and denote by $\{\bar y_\e\} \subset \hd \cap H^2(D)$ the sequence of associated states.  Then, there exists $\e_0>0$ so that for each $\e\in (0,\e_0]$ there exists  an 
 adjoint state $p_\e \in \hd \cap H^2(D)$ and a multiplier $\zeta_\e \in \li$ so that 
 the optimality system is satisfied:
\begin{subequations}  \begin{gather}
-\laplace p_\eps+\zeta_\eps p_\eps+\frac{1}{\eps}H_\eps(\bar g_\eps)p_\eps=2\chi_E(\bar y_\eps-y_d) \text{ a.e.\ in }D, \quad p_\eps=0 \text{ on }\partial D, 
\\\zeta_\eps(x) \in [\beta_-'(\yy_\eps(x)),\beta_+'(\yy_\eps(x))] \text{ a.e.\,in } D,
\\p_\e(x) \leq 0 \ \text{ a.e.\,where  } \yy_\eps(x) \in \NN , \label{pe}
\\( p_\eps[\eps-\frac{1}{\eps}H_\eps'(\bar g_\eps)\bar y_\eps]-\alpha H_\eps'( \bar g_\eps),h-\bar g_\eps)_{\ld}+(\bar g_\eps-\bar g_{\mathfrak{sh}},h-\bar g_\eps)_{\WW}\geq 0\quad \forall\,h\in \FF.
   \end{gather}\end{subequations}
\end{theorem}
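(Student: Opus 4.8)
The plan is to sharpen the weak stationarity already recorded in Lemma~\ref{lem:tool} into a strongly stationary system, the only genuinely new information being the sign condition \eqref{pe}: indeed, under the convexity imposed in Assumption~\ref{assu:reg0} one has $\beta_-'(\yy_\e)\le\beta_+'(\yy_\e)$, so the refined inclusion $\zeta_\e\in[\beta_-'(\yy_\e),\beta_+'(\yy_\e)]$ coincides with the one in \eqref{lem:clarke0} and the last relation is exactly \eqref{lem:grad_f0}. Since $H_\e$ is $C^1$, the only source of non-smoothness is $\beta$, so $S_\e$ is directionally differentiable and, for a feasible direction $\delta g:=h-\gbe$ with $h\in\FF$, the sensitivity $\dy=S_\e'(\gbe;\delta g)$ solves
\[
-\laplace\dy+\beta'(\yy_\e;\dy)+\tfrac1\e H_\e(\gbe)\dy=\Big[\e-\tfrac1\e H_\e'(\gbe)\yy_\e\Big]\delta g\ \text{ in }D,\qquad\dy=0\ \text{ on }\partial D.
\]
Local optimality of $\gbe$ then yields the primal (B-stationary) inequality $j_\e'(\gbe;\delta g)\ge0$, i.e.
\[
2\int_E(\yy_\e-y_d)\,\dy\,dx-\alpha\int_D H_\e'(\gbe)\,\delta g\,dx+(\gbe-\gbs,\delta g)_\WW\ \ge\ 0\qquad\ff\, h\in\FF.
\]

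Next I would introduce the adjoint state $p_\e\in\hd\cap H^2(D)$ and a multiplier $\zeta_\e\in\li$ solving the adjoint equation, test it against $\dy$ and the linearized equation against $p_\e$, and subtract the two identities. This eliminates the Laplacian and the penalty terms and re-expresses the observation term $2\int_E(\yy_\e-y_d)\dy\,dx$ as the sum of a bilinear gradient part and the residual $R:=\int_D[\zeta_\e\dy-\beta'(\yy_\e;\dy)]p_\e\,dx$. Substituting back, B-stationarity becomes $R+G(\delta g)\ge0$ for all $h\in\FF$, where
\[
G(\delta g):=\Big(p_\e\big[\e-\tfrac1\e H_\e'(\gbe)\yy_\e\big]-\alpha H_\e'(\gbe),\,\delta g\Big)_{\ld}+(\gbe-\gbs,\delta g)_\WW
\]
is precisely the left-hand side of \eqref{lem:grad_f0}, already known to be $\ge0$. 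In particular, for directions $\delta g$ with $\pm\delta g$ both feasible (e.g.\ those supported in $D\setminus\bar E$) one has $G(\delta g)=0$, so B-stationarity collapses to $R\ge0$.

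The heart of the argument is to convert $R\ge0$ into \eqref{pe}. On $\{\yy_\e\notin\NN\}$ the map $\beta$ is differentiable, $\beta'(\yy_\e;\dy)=\beta'(\yy_\e)\dy$, and the choice $\zeta_\e=\beta'(\yy_\e)$ annihilates the integrand of $R$ there. On the active set $\{\yy_\e\in\NN\}$ convexity renders $\dy\mapsto\beta'(\yy_\e;\dy)=\beta_+'(\yy_\e)\dy^{+}-\beta_-'(\yy_\e)\dy^{-}$ convex and positively homogeneous, with $[\beta_-'(\yy_\e),\beta_+'(\yy_\e)]$ the subdifferential of this map at $0$; hence any admissible $\zeta_\e$ in this interval gives $\zeta_\e\dy-\beta'(\yy_\e;\dy)\le0$ pointwise, so that $R$ is an integral over the active set of a nonpositive density against $p_\e$. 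To pin down the sign of $p_\e$, I would take $\zeta_\e=\beta_-'(\yy_\e)$ there and construct two-sided feasible directions $\delta g$ whose sensitivities $\dy$ are strictly positive on a prescribed measurable subset of $\{\yy_\e\in\NN\}$; on such a subset the density equals $(\beta_-'(\yy_\e)-\beta_+'(\yy_\e))\dy<0$, and $R\ge0$ then forces $p_\e\le0$ there. Letting the subset vary yields $p_\e\le0$ a.e.\ on $\{\yy_\e\in\NN\}$, that is, \eqref{pe}. Collecting the adjoint equation, the inclusion $\zeta_\e\in[\beta_-'(\yy_\e),\beta_+'(\yy_\e)]$, the sign condition, and the gradient inequality then completes the system.

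I expect the main obstacle to be precisely this construction of sign-definite sensitivities: $\dy$ cannot be prescribed freely, being coupled to $\delta g$ through the non-smooth linearized PDE, and one must realize positive $\dy$ concentrated on a possibly thin active set while keeping $\pm\delta g$ feasible in the convex set $\FF$. Securing this — in particular controlling the interaction between the genuinely nonlinear term $\beta'(\yy_\e;\cdot)$ and the stiff penalty $\tfrac1\e H_\e(\gbe)$ uniformly as $\e\searrow0$, and using \eqref{esssup} to guarantee the required non-degeneracy of $\yy_\e$ on $D\setminus\bar E$ — is what produces the threshold $\e_0$. The remaining steps are standard duality and convex-analysis manipulations.
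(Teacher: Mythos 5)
First, a point of comparison: this manuscript contains no proof of Theorem \ref{thm} at all --- it is recalled verbatim from \cite[Thm.\,3.14, Cor.\,4.2]{p2}, so your attempt can only be measured against the cited source, not against an in-paper argument. On its own terms, your bookkeeping is sound: the linearized state equation you write is correct, testing it against $p_\e$ and the adjoint equation against $\dy$ does yield $j_\e'(\gbe;h-\gbe)=R+G(h-\gbe)$, and under the convexity hypothesis one indeed has, pointwise, $\beta'(\yy_\e;v)-\zeta_\e v=(\beta_+'(\yy_\e)-\zeta_\e)v^{+}+(\zeta_\e-\beta_-'(\yy_\e))v^{-}\geq 0$ for every $\zeta_\e\in[\beta_-'(\yy_\e),\beta_+'(\yy_\e)]$, so the density of $R$ is nonpositive and concentrated on $\{\yy_\e\in\NN\}$. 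You also correctly observe that, given convexity, the interval in \eqref{lem:clarke0} is already ordered, so the genuinely new content is the sign condition \eqref{pe}.

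There are, however, two genuine gaps. First, a circularity: Lemma \ref{lem:tool} furnishes one fixed pair $(p_\e,\zeta_\e)$, and \eqref{lem:grad_f0} is known only for that pair; since $p_\e$ is determined by $\zeta_\e$ through the adjoint equation \eqref{lem:adj}, the moment you ``take $\zeta_\e=\beta_-'(\yy_\e)$'' on the active set you have changed $p_\e$, and the step ``$G(\delta g)\geq 0$ already known'' is no longer available for the new pair. The adjoint equation, the inclusion, the gradient inequality and the sign condition must be verified \emph{simultaneously} for a single multiplier pair --- which is precisely the assertion of the theorem, and it is exactly this that your argument does not deliver. Second, the heart of the matter --- producing two-sided feasible directions whose sensitivities are strictly positive on an arbitrary prescribed measurable subset $M\subset\{\yy_\e\in\NN\}$ while the density vanishes off $M$ --- is announced rather than performed, and it is not a routine step: the map $\delta g\mapsto\dy$ is only positively homogeneous (the sensitivities for $\pm\delta g$ do not cancel, so ``two-sided'' tricks do not linearize $R$), the control acts through the possibly degenerate amplitude $\e-\frac1\e H_\e'(\gbe)\yy_\e$, whose sign on $\{0<\gbe<\e\}$ has to be secured via \eqref{esssup} together with the convergence of $\{\gbe\}$ (this is where the threshold $\e_0$ and the convexity-only restriction actually do their work), and the constraint $g\leq 0$ a.e.\,in $E$ obstructs two-sided variations on part of the region where \eqref{pe} is nonetheless claimed to hold. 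You name these obstacles candidly in your closing paragraph, but overcoming them \emph{is} the theorem; as written, the proposal establishes nothing beyond what Lemma \ref{lem:tool} already provides.
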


\begin{remark}\label{rem}
The strict inequality in \eqref{esssup} implies that \eqref{f01} cannot be true. This means that, for Theorem \ref{thm} to be valid, we need to work under the second alternative requirement in the standing Assumption \ref{yd_f0}. \\We also remark that the optimality system in Theorem \ref{thm} is indeed strong stationary, i.e., equivalent with the first order necessary optimality condition associated to the local minimum of $j_\e$ from Definition \ref{def_e} (see \cite[Thm.\,3.18]{p2}). \end{remark}

\section{Properties and limit behaviour of curves $\gbec$ for $z \in [0,1]$ fixed}\label{sec:grad}

In this section we carry out the preparatory work for passing to the limit in the gradient equation \eqref{lem:grad_f0}. 
Let us point out from the very beginning that all the results in this section stay true if $\{\gbe\}$ is replaced by  an arbitrary sequence of $C^2(\bar D \setminus E)$-functions satisfying the convergence \eqref{c1} below and if $\gbs$ is an arbitrary element of the set $\FF_{\ss}$. Moreover, instead of $\gbec$ we may consider curves $\{\gbe=z_\e\}$ where $\{z_\e\}$ is a real-valued sequence approaching $0$ as $\e \searrow 0$. 

In all what follows, $\gbs \in \FF_{\ss}$ is a local minimizer of \eqref{p_shh} and $\{\bar g_\eps\} \subset \FF$ is a sequence of local optimizers of \eqref{p10} such that 
\[\bar g_\eps \to \bar g_{\mathfrak{sh}} \quad \text{in }\WW \text{ as }\eps \searrow 0.\]Note that its existence  is guaranteed by Theorem \ref{thm:cor}.
Moreover, we fix $s>3$ so that the following embedding is true \cite[p.\,88]{kaballo}
\begin{equation}\label{emb}
H^s(D \setminus \bar E) \embed C^2(\bar D \setminus  E).\end{equation} 
This implies 
\begin{equation}\label{c1}\bar g_\eps \to \bar g_{\mathfrak{sh}} \quad \text{in }C^2(\bar D \setminus  E) \quad \text{ as }\eps \searrow 0,\end{equation}
which will be crucial for the proof of the limit gradient equation \eqref{grad} below. 
Recall that  in introduction we imposed an upper bound on $s$, i.e., $s \leq 2,$ but this was only needed to ensure that  the last term in the objective of \eqref{p10} is well-defined. In order to guarantee that $\gbs \in \FF_{\mathfrak{sh}} \subset \WW$ is still true, we require: 
\begin{assumption}\label{gbs}
There exists $s>3$ so that $\gbs \in H^s(D \setminus \bar E)$.
\end{assumption}

\begin{remark} 
The requirement in Assumption \ref{gbs} is not restrictive, as one may define the set $\FF_{\mathfrak{sh}}$ via $C^{3,\gamma}(\bar D)$-mappings, $\gamma \in (0,1]$, instead of $C^2(\bar D)$-functions; see the comments in \cite[Sec.\,2.1]{p1}. The boundaries of the admissible shapes then have $C^3$ regularity (Lemma \ref{fs}) and  the previous investigations from \cite{p1,p2} remain unaffected by the change in the definition of the set of admissible parametrizations. Of course, the  drawback here is that the admissible  set of  shapes $\OO$, see \eqref{o}, becomes smaller.   \end{remark}

In the rest of the section, Assumption \ref{gbs} is tacitly assumed without mentioning it every time.
Thanks to the uniform convergence \eqref{c1}, the most important properties of $\gbs \in \FF_{\mathfrak{sh}}$, cf.\,\eqref{f_s}, are inherited by $\gbe$ (for $\e$ small enough).

\begin{lemma}\label{lem:g_e}
There exists $\delta>0$, dependent only on $D, E$ and $\gbs$, so that
\begin{equation}\label{eq:gbe0}
|\nabla \gbe|+|\gbe| \geq \delta \quad \text{in }\bar D \setminus  E,\end{equation}
\begin{equation}\label{eq:gbe0+}\gbe \geq \delta \quad \text{on }\partial D ,\end{equation}
for all $\e\in(0,\e_0],$ where $\e_0$ is small, dependent only on $\delta$ and the given data. In particular,  \begin{equation}\label{eq1}
  \min_{\gbe^{-1}[0,\eps]\cap(\bar D \setminus  E)} |\nabla \gbe|\geq \delta/2.\end{equation}
\end{lemma}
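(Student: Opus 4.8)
The plan is to obtain the lower bounds for $\gbe$ by transferring the strict‑positivity properties of the limit $\gbs\in\FF_{\ss}$, encoded in the defining conditions \eqref{f_s}, to the elements of the sequence via the uniform convergence \eqref{c1}. The starting observation is that $\dbme$ is compact: since $E$ is open and $\bar D$ is compact, $\dbme$ is a closed subset of a compact set; moreover $\dist(\clos E,\partial D)>0$ (Assumption \ref{assu:E}) guarantees $\partial D\subset\dbme$, so the boundary values of $\gbe$ and $\gbs$ are controlled by the $C^2(\dbme)$‑convergence as well. Note that only the $C^1$ part of \eqref{c1} (uniform convergence of $\gbe$ and $\nabla\gbe$) is needed here.

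First I would record the bounds for the limit function. The map $x\mapsto |\nabla\gbs(x)|+|\gbs(x)|$ is continuous on $\dbme$ and strictly positive there: on $D$ this is the second condition in \eqref{f_s}, and on $\partial D$ it follows from $\gbs>0$ (third condition in \eqref{f_s}). By compactness it attains a positive minimum, so set $2\delta_1:=\min_{\dbme}(|\nabla\gbs|+|\gbs|)>0$. Likewise $\gbs$ is continuous and strictly positive on the compact set $\partial D$, whence $2\delta_2:=\min_{\partial D}\gbs>0$. Put $\delta:=\min\{\delta_1,\delta_2\}$; note that $\delta$ depends only on $D$, $E$ and $\gbs$, as claimed.

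Next I transfer these bounds. The convergence \eqref{c1} yields, in particular, $\gbe\to\gbs$ and $\nabla\gbe\to\nabla\gbs$ uniformly on $\dbme$. By the reverse triangle inequality,
\[
\big|(|\nabla\gbe|+|\gbe|)-(|\nabla\gbs|+|\gbs|)\big|\leq |\nabla\gbe-\nabla\gbs|+|\gbe-\gbs|\quad\text{on }\dbme,
\]
so the left‑hand side tends to $0$ uniformly. Hence there is $\e_0>0$ (determined by $\delta$ and the rate of \eqref{c1}) such that for all $\e\in(0,\e_0]$ both the uniform error above and $\|\gbe-\gbs\|_{C(\partial D)}$ are at most $\delta$; then $|\nabla\gbe|+|\gbe|\geq 2\delta_1-\delta\geq\delta$ on $\dbme$ and $\gbe\geq 2\delta_2-\delta\geq\delta$ on $\partial D$, which are exactly \eqref{eq:gbe0} and \eqref{eq:gbe0+}.

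Finally, the ``in particular'' claim \eqref{eq1} follows from \eqref{eq:gbe0}: on $\gbe^{-1}[0,\e]\cap\dbme$ one has $0\leq\gbe\leq\e$, hence $|\gbe|\leq\e$, and \eqref{eq:gbe0} gives $|\nabla\gbe|\geq\delta-\e$ there; shrinking $\e_0$ if necessary so that $\e_0\leq\delta/2$, we obtain $|\nabla\gbe|\geq\delta/2$ on this (compact, possibly empty) set. There is no genuine obstacle in this lemma: it is a compactness‑plus‑uniform‑convergence routine, and the only points that require care are the bookkeeping of the constant $\delta$ and the inclusion $\partial D\subset\dbme$, which ensures that the separate positivity condition of \eqref{f_s} on $\partial D$ survives the passage to the limit.
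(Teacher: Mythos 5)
Your proof is correct and follows essentially the same route as the paper's: extract a positive lower bound $2\delta$ for $|\nabla\gbs|+|\gbs|$ on the compact set $\bar D\setminus E$ (and for $\gbs$ on $\partial D$) from \eqref{f_s}, transfer it to $\gbe$ via the uniform convergence \eqref{c1}, and deduce \eqref{eq1} by taking $\e\leq\delta/2$. Your only addition is the explicit (and correct) bookkeeping that the interior condition of \eqref{f_s} covers $D$ while the boundary condition $\gbs>0$ on $\partial D$ covers the rest of $\bar D\setminus E$, which the paper leaves implicit.
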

\begin{proof}In view of Assumption \ref{assu:E}, the set $\bar D \setminus  E$ is compact and since $\gbs \in \FF_{\ss},$ see \eqref{f_s}, we have 
\[|\nabla \gbs|+|\gbs| \geq 2\delta \quad \text{in }\bar D \setminus  E\]for some $\delta>0.$ 
The first desired assertion then follows from  \eqref{c1}. The second assertion can be concluded analogously, as $\partial D$ is compact and $\gbs >0$ on $\partial D,$ cf.\,\eqref{f_s}. The estimate \eqref{eq1} is an immediate consequence of \eqref{eq:gbe0} with $0 <\e \leq \delta/2.$
\end{proof}

 In the rest of the section, $z \in [0,1]$ is an arbitrary but fixed value. We keep the dependency on $z$ of the involved quantities in mind the whole time. Thanks to Lemma \ref{lem:g_e}, we know that there exists $\delta >0$ so that 
\begin{equation}\label{eq:gbe}
|\nabla \gbe|+|\gbe-\e z| \geq \delta/2 \quad \text{in }\bar D \setminus  E,\end{equation}
\begin{equation}\label{eq:gbe+}\gbe-\e z  \geq \delta/2 \quad \text{on }\partial D ,\end{equation}
for all $\e\in(0,\e_0],$ where $\e_0$ is small, dependent only on $\delta$ and the given data.
Note that $\e_0$ and $\delta$ are both independent of $z$. The above relations are of outmost importance as they allow us to conclude that the level sets $\{x \in \dbme:\gbe(x)=\e z\}$ have the same (useful) properties as $\partial \O_{\gbs}$ (Lemma \ref{fs}), see Proposition \ref{prop:per} below. Since the convergence \eqref{c1} is satisfied only on $\bar D \setminus E$ (not on the whole $\bar D$),  we have to make sure that the curves  $\{x \in \dbme:\gbe(x)=\e z\}$ remain in $D \setminus \bar E$ the whole time, i.e., they do not "cut" $\partial E$ (nor $\partial D$, which is already true due to \eqref{eq:gbe+}). This aspect  is guaranteed under the following 

\begin{assumption}\label{assu:d}
In the rest of the paper, we assume that
\[\gbs<0 \quad \text{in }\clos E.\]
\end{assumption}
\begin{remark}
The condition in Assumption \ref{assu:d} is equivalent to \[\partial E \cap \partial \O_{\gbs}=\emptyset,\]in view of Lemma \ref{fs}.\ref{class2}. In the context of optimal control,  the requirement that the boundary of $\O_{\gbs}$ does not touch the boundary of $E$ simply corresponds to  the fact that  "the constraint is not touched by the optimal shape".
\end{remark}

From now on, Assumption \ref{assu:d} is tacitly supposed to be true without mentioning it every time.  In all what follows, the set $\{x \in \dbme:\gbe(x)=\e z\}$ is simply denoted by $\gbec$ and instead of $\partial \O_{\gbs}$ we often write $\gbsc$ (Lemma \ref{fs}.\ref{class2}). As the next result shows, these level sets are indeed included in the interior of $\dbme,$ i.e., $\dme$ (cf.\,Assumption \ref{assu:E}). 


\begin{proposition}[{\cite[Prop.\,2]{pen}}]\label{prop:per}
Suppose that Assumption \ref{assu:d} is true.
Let $\e>0$ be fixed and small. Then, the level set $\gbec$  is a finite union of disjoint closed $C^2$ curves, without self intersections and not intersecting $\partial D$ nor $\partial E$. This is parametrized by the solution of  the Hamiltonian system \begin{equation}\label{ham_e}
 \left\{
 \begin{aligned}
    x_{\e,1} '(t)&=-\frac{\partial \gbe}{\partial x_2} (x_{\e,1} (t),x_{\e,2} (t)) ,
    \\    x_{\e,2} '(t)&=\frac{\partial \gbe}{\partial x_1}(x_{\e,1} (t),x_{\e,2} (t)) ,
   \\   (x_{\e,1} (0),x_{\e,2} (0))&=x^{\e}_0 \in \{\gbe=\e z\}  \end{aligned}
 \quad \right.
\end{equation} when some initial point $x_0^\e \in D \setminus \bar E$ with $\gbe(x^{\e}_0)=\e z$ is chosen on each component. The same assertion holds true for $\gbsc$. Each of its components is parametrized by the solution of
  \begin{equation}\label{ham}
 \left\{
 \begin{aligned}
   \bar x_{1} '(t)&=-\frac{\partial \gbs}{\partial x_2} (\bar x_{1} (t),\bar x_{2} (t)) ,
    \\   \bar  x_{2} '(t)&=\frac{\partial \gbs}{\partial x_1}(\bar x_{1} (t),\bar x_{2} (t)) ,
   \\   (\bar x_{1} (0),\bar x_{2} (0))&=\bar x_0 \in \{\gbs=0\}, \end{aligned}
 \quad \right.
\end{equation}where $\bar x_0 \in D \setminus \bar E$ with $\gbs(\bar x_0)=0$ is an initial point on the respective component. 
\end{proposition}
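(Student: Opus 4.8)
The plan is to treat the level set $\gbec$ as a regular level set of the $C^2$ map $\gbe$ and then read off the Hamiltonian parametrization from the skew-gradient flow. Two facts drive the whole argument: that $\gbe$ attains the value $\e z$ only \emph{regularly} (its gradient does not vanish on the level set), and that the level set is confined to the \emph{open} set $\dme$, away from $\partial D$ and $\partial E$. Everything else is then a combination of the implicit function theorem, the classification of compact one-manifolds, and standard ODE theory.

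First I would establish the confinement, i.e.\ that $\gbec$, a priori a closed subset of $\dbme$, actually avoids $\partial D$ and $\partial E$. On $\partial D$, estimate \eqref{eq:gbe+} gives $\gbe-\e z \geq \delta/2 > 0$, so $\gbe \neq \e z$ there. On $\partial E \subset \bar D \setminus E$, Assumption \ref{assu:d} yields $\gbs \leq -c < 0$ for some $c>0$ by compactness, and the uniform convergence \eqref{c1} then forces $\gbe \leq -c/2 < 0 \leq \e z$ for all $\e$ small, so $\gbe \neq \e z$ on $\partial E$ as well. Consequently $\gbec$ is a compact subset of $\dme$, and the threshold on $\e$ can be taken independent of $z \in [0,1]$ since both bounds are uniform in $z$.

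Next I would identify the manifold structure. Since $z \in [0,1]$ gives $\e z \in [0,\e]$, and $\gbec \subset \gbe^{-1}([0,\e]) \cap (\bar D \setminus E)$, estimate \eqref{eq1} yields $|\nabla \gbe| \geq \delta/2 > 0$ on $\gbec$; thus $\e z$ is a regular value of $\gbe$ on $\dme$. Because $\gbe \in C^2(\bar D \setminus E)$ by \eqref{emb}, the implicit function theorem shows that near each of its points $\gbec$ is a $C^2$ one-dimensional submanifold; being a compact $C^2$ one-manifold without boundary, it is a finite disjoint union of closed $C^2$ curves without self-intersections. For the parametrization I would then note that the right-hand side of \eqref{ham_e} is the skew gradient $V=(-\partial_{x_2}\gbe,\partial_{x_1}\gbe)$, which satisfies $\nabla\gbe\cdot V \equiv 0$; hence $\gbe$ is constant along trajectories of \eqref{ham_e} and $V$ is tangent to $\gbec$. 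On $\gbec$ the field $V$ is $C^1$ and non-vanishing (again by \eqref{eq1}), so standard ODE theory provides, for each chosen initial point $x_0^\e$, a unique maximal solution; compactness of the component together with tangency keeps the trajectory on that component for all time, and a non-vanishing tangent field on a compact connected one-manifold (a circle) admits only periodic orbits, each sweeping out its whole component. This gives the claimed parametrization of every component of $\gbec$.

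The assertion for $\gbsc$ follows identically, with $\e z$ replaced by the regular value $0$: $\{\gbs=0\}$ is confined to $\dme$ (using $\gbs>0$ on $\partial D$ from \eqref{f_s} and Assumption \ref{assu:d} on $\partial E$), and $|\nabla\gbs|>0$ there since $|\nabla\gbs|+|\gbs|>0$ by \eqref{f_s} while $\gbs=0$ on the level set; the structural properties are in any case already recorded in Lemma \ref{fs}, so only the Hamiltonian parametrization via \eqref{ham} needs the ODE argument above. I expect the delicate point to be precisely this last step — verifying rigorously that each trajectory of the skew-gradient flow is periodic and covers exactly one full component — since there the \emph{global} dynamical behaviour, rather than the local regularity supplied by the implicit function theorem, must be controlled.
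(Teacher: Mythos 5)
Your proposal is correct, and in its first two steps it follows the same skeleton as the paper's proof: confinement of the level set to $\dme$ via \eqref{eq:gbe+} on $\partial D$ and Assumption \ref{assu:d} combined with the uniform convergence \eqref{c1} on $\partial E$, and then nonvanishing of $\nabla \gbe$ on the level set via \eqref{eq:gbe}/\eqref{eq1} together with the $C^2$ regularity supplied by \eqref{emb}, which makes the implicit function theorem applicable and gives a global $C^2$ solution of \eqref{ham_e}. Where you genuinely diverge is in the global structure: the paper stops at this point and delegates the remaining assertions --- finitely many disjoint closed $C^2$ curves without self-intersections, periodicity of the trajectories, each trajectory sweeping out a full component --- to the arguments in the proof of \cite[Prop.\,2]{pen}, whereas you derive them self-containedly by first invoking the regular value theorem and the classification of compact one-manifolds without boundary, and only afterwards using the skew-gradient flow (tangency $\nabla\gbe \cdot V \equiv 0$, nonvanishing of $V$ on the level set, periodic orbits of a nonvanishing field on a circle) to recover the Hamiltonian parametrization. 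Your route makes explicit which hypothesis enters where and replaces the external citation with standard differential topology and ODE theory; the paper's route keeps the proof short and stays inside the implicit-parametrization framework of \cite{pen}, in which the Hamiltonian flow itself is the primitive object producing the curves rather than a parametrization grafted onto a manifold obtained by other means. One small point you omit that the paper's proof includes: exploiting the sign change of $\gbe-\e z$ between $\partial D$ (where it is $\geq \delta/2$ by \eqref{eq:gbe+}) and $\partial E$ (where $\gbe<0\leq \e z$) together with the continuity of $\gbe$ on the connected set $\dbme$, the paper verifies that the level set is nonempty, so that an initial point $x_0^\e$ actually exists. Your confinement step already contains both boundary bounds, so this is a one-line addition via the intermediate value theorem, but without it the instruction ``choose an initial point on each component'' has no guaranteed content, and the nonemptiness is what is used later in Section \ref{subsec1}.
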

\begin{proof}
We start by noticing that Assumption \ref{assu:d} guarantees the existence of a point $x_0^\e \in \dme$ so that $\gbe(x_0^\e)=\e z$, provided that $\e>0$ is small enough.
 This is due to \eqref{eq:gbe+}, the fact that $\gbe \leq \frac{1}{2}\max_{\partial E}\gbs<0$ on $\partial E$, see \eqref{c1}, and the continuity of $\gbe$ on the connected set $\dbme$, cf.\,\eqref{emb}.
This also implies  that $\gbec \subset \dme.$
By \eqref{eq:gbe} and the regularity of $\gbe$, see \eqref{emb}, \eqref{ham_e} admits a unique $C^2$ solution $x_\e:(-\infty,\infty) \to \dme$, thanks to the implicit function theorem. 
Then, by the arguments from the proof of {\cite[Prop.\,2]{pen}}, one can show the remaining of desired result. The statement regarding $\gbs$ is due to $\gbs \in \FF_{\mathfrak{sh}}$ and \cite[Prop.\,2]{pen}, see also Lemma \ref{fs}.
\end{proof}

\begin{remark}
In the treatment of \eqref{p_sh}, we are interested only in the component of $\O_{\gbs}$ that contains the observation set $E$, cf.\,\eqref{o}. 
However, the boundary of this particular component may be a union of  several closed curves (disjoint, without self intersections, contained in $\gbsc$), that is, $\O_{\gbs}$ may have holes. Thanks to Proposition \ref{prop:per}, see also \cite[Prop.\,2]{pen}, their number is finite. With a little abuse of notation, we use for each one of them the same notation, that is, $\{\gbs=0\}$. Later, when we integrate over the whole boundary of the relevant component of $ \O_{\gbs}$ (see Corollary \ref{int_conv0} below), we will have a finite sum of similar terms associated to
{each} component of $\partial \O_{\gbs}$, by fixing some initial condition in \eqref{ham} on each closed curve. The same observation applies to $\{\gbe=\e z\}$. 
\end{remark}
\subsection{The existence of a unique approximating curve}\label{subsec1}
Next we focus on showing that to each component $\{\gbs=0\}$ of the boundary of $\O_{\gbs}$ we may associate exactly one closed curve $\{\gbe=\e z\}$, provided that $\e>0$ is small enough. The existence of such an \textit{approximating} curve is established in  Lemma \ref{ex}  below, while its uniqueness follows from {Lemma \ref{lem:unique}}.

 \begin{lemma}\label{lem:l}
For each $\lambda>0$ there exists $\e_0>0$ so that
\[\{x \in \bar D\setminus E: \gbe(x) \in [-\e,\e]\} \subset V_{\lambda} \quad \forall\,\e\in (0,\e_0],\]
where \[V_{\lambda}:=\{x \in \bar D\setminus E:  \dist(x,\partial \O_{\gbs})<\lambda\}.\]
\end{lemma}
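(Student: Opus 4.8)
The plan is to argue by contradiction through a soft compactness argument, the point being that the tube $\{x\in\bar D\setminus E:|\gbe(x)|\le\e\}$ must collapse onto the zero level set $\gbsc=\partial\O_{\gbs}$ as $\e\searrow0$. Suppose the assertion fails. Then there exist $\lambda>0$, a sequence $\e_n\searrow0$ and points $x_n\in\bar D\setminus E$ such that $\bar g_{\e_n}(x_n)\in[-\e_n,\e_n]$ while $\dist(x_n,\partial\O_{\gbs})\ge\lambda$ for every $n$. Since $\bar D\setminus E$ is compact (Assumption \ref{assu:E}), I would extract a subsequence, not relabelled, with $x_n\to\bar x\in\bar D\setminus E$.

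The next step is to identify $\bar x$ as a zero of $\gbs$. Splitting
\[
\bar g_{\e_n}(x_n)-\gbs(\bar x)=\big(\bar g_{\e_n}(x_n)-\gbs(x_n)\big)+\big(\gbs(x_n)-\gbs(\bar x)\big),
\]
the first bracket is bounded in absolute value by $\|\bar g_{\e_n}-\gbs\|_{C(\bar D\setminus E)}$, which tends to $0$ by the uniform convergence in \eqref{c1}, while the second tends to $0$ by continuity of $\gbs$ together with $x_n\to\bar x$. Hence $\bar g_{\e_n}(x_n)\to\gbs(\bar x)$; on the other hand $|\bar g_{\e_n}(x_n)|\le\e_n\to0$, so that $\gbs(\bar x)=0$. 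Because $\gbs>0$ on $\partial D$, see \eqref{f_s}, the limit point $\bar x$ cannot lie on $\partial D$, whence $\bar x\in D$ and therefore $\bar x\in\{x\in D:\gbs(x)=0\}=\partial\O_{\gbs}$ by Lemma \ref{fs}.\ref{class2}. Consequently $\dist(x_n,\partial\O_{\gbs})\le|x_n-\bar x|\to0$, which contradicts $\dist(x_n,\partial\O_{\gbs})\ge\lambda>0$ and proves the claim.

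I do not expect a genuine obstacle here, since the whole argument rests only on uniform convergence and compactness. The single point that requires care is the double passage to the limit: one must combine the uniform convergence of the functions $\bar g_{\e_n}$ with the convergence of the evaluation points $x_n$, which is exactly what the triangle-inequality splitting above handles. The other ingredient worth isolating is the clean identification of the level set $\{\gbs=0\}$ with the boundary $\partial\O_{\gbs}$, for which I invoke Lemma \ref{fs}.\ref{class2} (valid since $\gbs\in\FF_{\ss}$); it is precisely the non-degeneracy built into $\FF_{\ss}$ that prevents $\{\gbs=0\}$ from being thicker than the topological boundary.
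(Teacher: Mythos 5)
Your proof is correct and rests on exactly the same ingredients as the paper's: compactness of $\bar D\setminus E$, the uniform convergence \eqref{c1}, the identification $\partial \O_{\gbs}=\{x\in D:\gbs(x)=0\}$ from Lemma \ref{fs}.\ref{class2}, and $\gbs>0$ on $\partial D$. The paper runs the same compactness argument in direct form---it establishes $|\gbs|\geq \delta$ on the compact set $\bar D\setminus E\setminus V_{\lambda}$, deduces $|\gbe|\geq \delta/2$ there from \eqref{c1}, and chooses $\e_0<\delta/2$---whereas you take the sequential contrapositive, so the two proofs are essentially identical, the direct version merely yielding an explicit threshold $\e_0$ in terms of $\min_{\bar D\setminus E\setminus V_{\lambda}}|\gbs|$.
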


  \begin{proof}Let $\lambda>0$ be arbitrary but fixed. We follow the ideas of the proof of \cite[Prop.\,3]{pen}. We recall that,  according to  Lemma \ref{fs}.\ref{class2}, it holds $\partial \O_{\gbs}=\{x \in D: \gbs(x) = 0\}$. 
Since $\bar D \setminus E \setminus V_{\lambda}$ is compact and since $\gbs$ is continuous, we have 
\[|\gbs| \geq \delta \quad \text{in } \bar D \setminus E \setminus V_{\lambda}\]for some $\delta>0$ dependent only on $\lambda$ and the given data. In view of \eqref{c1}, this implies
\[|\gbe| \geq \delta/2 \quad \text{in } \bar D \setminus E \setminus V_{\lambda} \quad \forall\,\e\in (0,\e_0],\]for some $\e_0>0$ dependent only on $\lambda$ and the given data. By choosing $\e_0<\delta/2,$ we infer the first desired assertion. 
\end{proof}

\begin{lemma}[Existence of an approximating curve]\label{ex}
Let \[d:=\min_{\omega_1 \neq \omega_2, \\ \o_1,\omega_2 \in \MM} \dist({\o_1}, {\o_2})>0\]
and $\MM:=\{\omega: \omega \text{ is a component of }\partial \O_{\gbs}\}\cup \partial E.$
If Assumption \ref{assu:d} is satisfied, then, for each $\widetilde \lambda \in(0,d/3]$ and each  component $\{\gbs=0\}$ of $\partial \O_{\gbs}$ there exists $\e_0>0$ so that 
\begin{equation}\label{inclusion}
\{ \gbe=\e z\} \subset \{x \in  D\setminus \bar E:  \dist(x,\gbsc)<\widetilde \lambda\} \quad \forall\,\e\in (0,\e_0],\end{equation}where $\{ \gbe=\e z\}$ is a closed curve as in Proposition \ref{prop:per}.
In particular,
 we have the convergence 
\begin{equation}\label{h_d_c}
d_{\HH}(\{ \gbe=\e z\} , \gbsc)\to 0 \ee,\end{equation}
where $d_{\HH}(\{ \gbe=\e z\} , \gbsc)$ denotes the Hausdorff-Pompeiu  distance between the compact sets $\{ \gbe=\e z\}$ and $ \gbsc$, i.e.,
\begin{equation}\begin{aligned}
&d_{\HH}(\{ \gbe=\e z\} , \gbsc)\\ &\quad:=\max \{\max_{x \in \gbsc}\dist(x,\{ \gbe=\e z\}),\max_{\widetilde x \in \{ \gbe=\e z\}}\dist(\widetilde x, \gbsc)\}.\end{aligned}\end{equation}
  \end{lemma}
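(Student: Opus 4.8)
The plan is to construct one approximating curve by hand from a transversal crossing point of $\gbe$ and then to trap it inside a thin one-component collar of $\gbsc$ by a connectedness argument; the Hausdorff convergence will then split into an easy ``contained-in'' half coming from the inclusion \eqref{inclusion} and a harder ``sweeps-out'' half.

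\textbf{Step 1: a crossing point near $\gbsc$.} First I would fix a component $\gbsc$ of $\partial \ogs$ and a point $\bar x_0 \in \gbsc$. By Lemma \ref{fs}.\ref{class2} one has $\ogs = \{\gbs < 0\}$, and the non-degeneracy $\nabla \gbs(\bar x_0) \neq 0$ built into $\FF_{\ss}$ (cf.\ \eqref{f_s} and Lemma \ref{lem:g_e}) lets me pick endpoints $a,b$ on the gradient line through $\bar x_0$, at distance $< \widetilde\lambda$ from $\bar x_0$, with $\gbs(a) \leq -c < 0 < c \leq \gbs(b)$ for some $c>0$; the whole segment stays in $\dme$ because $\dist(\gbsc,\partial E)\geq d>0$. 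Since $\gbe \to \gbs$ uniformly by \eqref{c1} and $0 \leq \e z \leq \e$, for $\e$ small one gets $\gbe(a) < \e z < \gbe(b)$, so the intermediate value theorem furnishes $x_0^\e$ on the segment with $\gbe(x_0^\e)=\e z$ and $|x_0^\e - \bar x_0| < \widetilde\lambda$. Note this uses only $\e z \in [0,\e]$, so the resulting threshold is uniform in $z \in [0,1]$.

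\textbf{Step 2: trapping the curve.} By Proposition \ref{prop:per}, $x_0^\e$ lies on a closed $C^2$ curve $\Gamma_\e=\{\gbe=\e z\}$ that is one connected component of the level set $\gbec$. Lemma \ref{lem:l} with $\lambda = \widetilde\lambda$ gives $\gbec \subset \{x : \gbe(x)\in[-\e,\e]\} \subset V_{\widetilde\lambda}$, and because $\widetilde\lambda \leq d/3$ the $\widetilde\lambda$-collars of the finitely many distinct components of $\partial \ogs$ are pairwise disjoint (any point common to two of them would force the mutual distance $\leq 2\widetilde\lambda \leq 2d/3 < d$) and remain inside $\dme$ (here the inclusion of $\partial E$ in $\MM$ is used). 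Since $\Gamma_\e$ is connected and meets the collar of $\gbsc$ at $x_0^\e$, it lies entirely in that single collar, which is precisely \eqref{inclusion}.

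\textbf{Step 3: Hausdorff convergence and the main obstacle.} The inclusion \eqref{inclusion} at once controls one half of $d_{\HH}$: every $\xx \in \Gamma_\e$ has $\dist(\xx,\gbsc) < \widetilde\lambda$, and since $\widetilde\lambda$ may be taken arbitrarily small (shrinking the threshold accordingly), $\max_{\xx \in \Gamma_\e}\dist(\xx,\gbsc)\to 0$ as $\e \searrow 0$. The reverse estimate $\max_{x\in\gbsc}\dist(x,\Gamma_\e)\to 0$ is the crux, and is where I expect the real difficulty: I must know that the \emph{single} curve $\Gamma_\e$ covers all of $\gbsc$, not just sits near a piece of it. My first route would be to cover the compact set $\gbsc$ by finitely many balls $B(\bar x_i,\eta/2)$ and rerun Step 1 at each $\bar x_i$, producing crossing points of $\gbec$ within $\eta/2$ of the $\bar x_i$; each lies in the collar of $\gbsc$, and, once the uniqueness result Lemma \ref{lem:unique} is available to identify it with $\Gamma_\e$, within $\eta$ of $\Gamma_\e$. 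A second, more intrinsic route is to parametrize $\gbsc$ by the periodic solution of \eqref{ham} and $\Gamma_\e$ by the solution of \eqref{ham_e} started at $x_0^\e$, and to invoke continuous dependence of ODE trajectories on the initial datum and on the vector field, the latter converging in $C^1$ by \eqref{c1}; the delicate point then is controlling the period of the closed orbit $\Gamma_\e$ so that the flow traverses the whole curve. Either way, upgrading ``$\Gamma_\e$ lies near $\gbsc$'' to ``$\gbsc$ lies near $\Gamma_\e$'' is the step I expect to cost the most, which is exactly why existence and uniqueness are separated into Lemma \ref{ex} and Lemma \ref{lem:unique}.
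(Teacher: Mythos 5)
Your Steps 1--2 are, in substance, the paper's own proof of \eqref{inclusion}. The paper produces the crossing point globally rather than along a single gradient segment: it introduces the compact sets $K_1:=\{x\in\bar D\setminus E:\dist(x,\gbsc)=\widetilde\lambda/2\}\setminus\ogs$ and $K_2:=\{x\in\bar D\setminus E:\dist(x,\gbsc)=\widetilde\lambda/2\}\cap\ogs$, obtains $\gbs\geq\gamma$ on $K_1$ and $\gbs\leq-\gamma$ on $K_2$, upgrades this via \eqref{c1} to $\gbe\geq\e$ on $K_1$ and $\gbe\leq 0$ on $K_2$ for small $\e$, and concludes by continuity of $\gbe$ on the connected set $\{x\in\bar D\setminus E:\dist(x,\gbsc)\leq\widetilde\lambda/2\}$; your intermediate-value argument along the segment $[a,b]$ is the local version of the same device, and, like the paper's, it is uniform in $z\in[0,1]$. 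Your trapping step (Lemma \ref{lem:l} plus pairwise disjointness of the $\widetilde\lambda$-collars, which is exactly where $\widetilde\lambda\leq d/3$ enters) spells out what the paper compresses into ``the desired assertion then follows by Proposition \ref{prop:per} and from Lemma \ref{lem:l}.''

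Where you genuinely diverge is Step 3, and there your caution exceeds what the paper supplies: the paper disposes of \eqref{h_d_c} in one sentence (``follows from the definition of the Hausdorff-Pompeiu distance and \eqref{inclusion}''), which cleanly covers only the half $\max_{\widetilde x\in\gbec}\dist(\widetilde x,\gbsc)\to 0$; the reverse half requires ruling out that the closed curve sits near only a piece of $\gbsc$ (e.g.\ a small contractible loop in the collar), and, as you correctly observe, this does not follow from \eqref{inclusion} alone. Your route (a) is the economical and non-circular completion: the finite cover of $\gbsc$ together with your Step 1 at each center yields level-set points within $\eta$ of every $\bar x\in\gbsc$, and the only missing ingredient --- that these points all lie on the \emph{same} closed curve --- is exactly Lemma \ref{lem:unique}, whose proof rests only on Lemma \ref{lem:g_e}, the Lipschitz continuity of $\gbs$ and \eqref{c1}, hence does not presuppose \eqref{h_d_c}. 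Route (b), by contrast, must be handled with care: Proposition \ref{lem:vec} as proved in the paper \emph{uses} \eqref{h_d_c} to get convergence of the projected initial points, so invoking it verbatim would be circular; it becomes legitimate only if you feed the Gronwall estimate the initial points produced by your Step 1, and the period control you flag then requires reproducing the arguments of Proposition \ref{lem:vec}(ii). In short: your proof of \eqref{inclusion} matches the paper's, and your treatment of \eqref{h_d_c} is more careful than the paper's one-line justification, at the price of a forward reference to Lemma \ref{lem:unique} that is, however, harmless.
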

\begin{proof}
To show the inclusion \eqref{inclusion}, it suffices to prove that there exists    a point $x_0^\e$ in $\dme$ with $\dist(x_0^\e,\gbsc)<\widetilde \lambda$ that satisfies $\gbe(x_0^\e)=\e z$, where $\e$ is smaller than some fixed value $\e_0>0$. The desired assertion then follows by Proposition \ref{prop:per} and from Lemma \ref{lem:l} with $\lambda:=\widetilde \lambda$. Since $\gbs$ is continuous, there exists  $\gamma>0$ so that 
\[\gbs \geq \gamma \text{ on }K_1 \text{ and }\gbs \leq -\gamma \text{ on }K_2,\]
where $K_1:= \{x \in  \bar D\setminus E:  \dist(x,\gbsc)=\widetilde \lambda/2\} \setminus \O_{\gbs}$
and $K_2:= \{x \in  \bar D\setminus E:  \dist(x,\gbsc)=\widetilde \lambda/2\} \cap  \O_{\gbs}$; note that these sets  are compact and non-empty, since $\gbs \in \FF_{\mathfrak{sh}}$, cf.\,the last inequality in \eqref{f_s} and Lemma \ref{fs}.\ref{class2}. Hence, by \eqref{c1}, we have for $\e$ small enough, dependent on $\widetilde \lambda$ and on $\gbs$, that 
\[\gbe \geq \e \text{ on }K_1 \text{ and }\gbe \leq 0 \text{ on }K_2.\]From here we deduce that 
 $\{x \in \bar D\setminus E: \dist(x,\gbsc)\leq \widetilde \lambda/2, \gbe(x)= \e z\}$ is non-empty, as a consequence of the continuity of $\gbe$ on the connected set $\{x \in \bar D\setminus E: \dist(x,\gbsc)\leq \widetilde \lambda/2\}$.
  The convergence \eqref{h_d_c} then follows from the definition of the Hausdorff-Pompeiu distance and \eqref{inclusion}.\end{proof}

Note that Lemma \ref{ex} does not say anything about the uniqueness of the approximating curve $\gbec$; that is, for the same $\e$ one may have two different approximating closed curves $\gbec$ contained in the set $\{x \in  D\setminus \bar E:  \dist(x,\gbsc)<\widetilde \lambda\}$. However,  we show next that if $\widetilde \lambda$ is chosen even smaller, then  there exists exactly one approximating curve
 in the $\widetilde \lambda-$region surrounding $\gbsc$.

 \begin{lemma}[Uniqueness of the approximating curve]\label{lem:unique}
Suppose that Assumption \ref{assu:d} holds true and let $\{\gbs=0\}$ be a component of $\partial \O_{\gbs}$. 
Then, there exists $\lambda>0$ and  $\e_0(\lambda)>0$ so that for each $\e \in (0,\e_0(\lambda)]$ there is exactly one closed curve $\{ \gbe=\e z\}$ satisfying
\[\{ \gbe=\e z\} \subset \{x \in  D\setminus \bar E:  \dist(x,\gbsc)< \lambda\} .\]  
In particular, it holds
 \begin{equation}\label{grad_u}
|\nabla \gbe(x)| \geq  \delta/2 \quad \forall\,x \in  D\setminus \bar E \text{ with }  \dist(x,\gbsc)< \lambda, \quad \forall\,\eps\in (0,\e_0(\lambda)],\end{equation}where $\delta>0$ is dependent only on $D,E$ and $\gbs$. Thus, for each $\e \in (0,\e_0(\lambda)],$ the mapping $\gbe$  cannot have local extremum points in $\{x \in  D\setminus \bar E:  \dist(x,\gbsc)< \lambda\}$.
\end{lemma}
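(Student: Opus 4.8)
The plan is to localise to a thin tubular neighbourhood of the compact $C^2$-curve $\gbsc$ and to show that, in normal coordinates, the level set $\gbec$ is the graph of a single function over $\gbsc$; being a graph over a connected curve, such a set is automatically connected, which is exactly the uniqueness we want. I would first secure the gradient bound \eqref{grad_u}. Since $\gbs \in \FF_{\ss}$ vanishes on $\gbsc$, the estimate underlying Lemma \ref{lem:g_e} (namely $|\nabla \gbs|+|\gbs|\geq 2\delta$ on the compact set $\dbme$) forces $|\nabla \gbs|\geq 2\delta$ on $\gbsc$. By uniform continuity of $\nabla \gbs$ there is $\lambda>0$ with $|\nabla \gbs|\geq \delta$ on the closure of the tube $\{x\in\dme:\dist(x,\gbsc)<\lambda\}$, and, shrinking $\lambda$ if needed (using Assumption \ref{assu:d} so that $\gbsc$ avoids $\partial E$ and $\partial D$), this tube sits inside $\dme$. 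The uniform convergence $\nabla \gbe \to \nabla \gbs$ from \eqref{c1} then yields $|\nabla \gbe|\geq \delta/2$ on the tube for all $\e\leq \e_0(\lambda)$, which is \eqref{grad_u}. Since the tube is open and $\gbe\in C^1$ has no critical point there, it can have no interior local extremum, proving the final assertion at once.

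Next I would introduce normal coordinates. As $\gbsc$ is a compact $C^2$-curve (Proposition \ref{prop:per}), for $\lambda$ below its reach the map $\Phi(p,t):=p+t\,\nu(p)$, with $\nu(p):=\nabla \gbs(p)/|\nabla \gbs(p)|$ the outward unit normal, is a $C^1$-diffeomorphism from $\gbsc\times(-\lambda,\lambda)$ onto the tube, and $t$ is the signed distance to $\gbsc$. Writing $\widetilde g_\e(p,t):=\gbe(\Phi(p,t))$, I get $\partial_t \widetilde g_\e(p,t)=\nabla \gbe(p+t\,\nu(p))\cdot \nu(p)$. On $\gbsc$ one has $\nabla \gbs(p)\cdot\nu(p)=|\nabla \gbs(p)|\geq 2\delta$; by continuity of $\nabla \gbs$ and a further reduction of $\lambda$, $\nabla \gbs(q)\cdot\nu(p)\geq \delta$ for every $q$ on the normal fibre through $p$, and then \eqref{c1} gives $\partial_t \widetilde g_\e \geq \delta/2>0$ on all of $\gbsc\times(-\lambda,\lambda)$ for $\e$ small. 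Hence $t\mapsto \widetilde g_\e(p,t)$ is strictly increasing for each fixed $p$, so $\widetilde g_\e(p,t)=\e z$ has at most one solution on each fibre.

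To pin down the endpoint values I would argue as in the proof of Lemma \ref{ex}: the compact sets $\{\dist(\cdot,\gbsc)=\lambda\}\setminus \ogs$ and $\{\dist(\cdot,\gbsc)=\lambda\}\cap \ogs$ carry $\gbs\geq\gamma$ and $\gbs\leq-\gamma$ respectively for some $\gamma>0$ (recall $\ogs=\{\gbs<0\}$), so by \eqref{c1} one gets $\widetilde g_\e(p,\lambda)\geq\gamma/2>\e\geq\e z$ and $\widetilde g_\e(p,-\lambda)\leq-\gamma/2<0\leq\e z$ once $\e<\gamma/2$ (here $z\in[0,1]$ is used). Strict monotonicity together with the intermediate value theorem then produces, for each $p\in\gbsc$, a \emph{unique} $t_\e(p)\in(-\lambda,\lambda)$ with $\widetilde g_\e(p,t_\e(p))=\e z$; by the implicit function theorem $t_\e$ is continuous (in fact $C^1$), so $\Phi(\{(p,t_\e(p)):p\in\gbsc\})$ is a single closed $C^2$-curve and coincides with the intersection of $\gbec$ with the tube. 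This is the unique approximating curve claimed.

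I expect the main obstacle to be upgrading the transversality of $\nabla \gbs$ to $\gbsc$, which a priori holds only on the curve, to the uniform estimate $\nabla \gbe(q)\cdot\nu(p)\geq\delta/2$ valid for every $q$ on the whole normal fibre and every small $\e$. This hinges on choosing $\lambda$ small enough, depending only on $\gbs$, so that $\nu(p)$ stays nearly aligned with $\nabla \gbs(q)/|\nabla \gbs(q)|$ along each fibre (uniform continuity of $\nabla \gbs$ on the compact tube), and only afterwards letting $\e\searrow0$ in \eqref{c1}. The quantifier order — first fix $\lambda$ from the data, then fix $\e_0(\lambda)$ — is essential, and I would take care to respect it throughout.
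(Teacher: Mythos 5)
Your proof is correct, but it reaches the uniqueness claim by a genuinely different route than the paper. For the gradient bound \eqref{grad_u} the two arguments already diverge slightly: you use that $\gbs=0$ on $\gbsc$ forces $|\nabla\gbs|\geq 2\delta$ there and propagate this by uniform continuity of $\nabla\gbs$ into the tube, whereas the paper instead bounds $|\gbs|\leq L_{\gbs}\lambda/2$ on the tube via the Lipschitz constant of $\gbs$, deduces $|\gbe|\leq\delta/2$ from \eqref{c1}, and then extracts $|\nabla\gbe|\geq\delta/2$ from the inherited estimate $|\nabla\gbe|+|\gbe|\geq\delta$ of Lemma \ref{lem:g_e}; both derivations are sound and respect the same quantifier order ($\lambda$ fixed from the data first, $\e_0(\lambda)$ second), which you correctly flag as essential. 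The substantive difference is the uniqueness step itself: the paper stops at the absence of critical points and disposes of uniqueness indirectly, with a brief contradiction argument (in effect, a second closed curve at the level $\e z$ inside the $\lambda$-region would force $\gbe-\e z$ to attain a local extremum, contradicting \eqref{grad_u}; cf.\ the sign discussion in Remark \ref{rem:top}), while you argue constructively in normal coordinates: the fiberwise monotonicity $\partial_t \widetilde g_\e\geq\delta/2$, the endpoint signs obtained by the $K_1$/$K_2$ compactness argument borrowed from Lemma \ref{ex}, and the intermediate value theorem give exactly one crossing per normal fiber, so the level set inside the tube is a single graph over $\gbsc$ and hence one closed curve. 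Your route costs the tubular-neighborhood machinery (positive reach of the compact $C^2$ curve, $\Phi$ a $C^1$ diffeomorphism with $t$ the signed distance — all legitimate since $\nu$ is $C^1$), but it buys more than the paper's proof delivers: it re-establishes the existence statement of Lemma \ref{ex} in the same stroke, and it exhibits the approximating curve as a graph $p\mapsto t_\e(p)$ with $\|t_\e\|_{C(\gbsc)}\to 0$, which quantifies the Hausdorff convergence \eqref{h_d_c}. The one point you singled out as the main obstacle — upgrading the transversality $\nabla\gbs(p)\cdot\nu(p)=|\nabla\gbs(p)|\geq 2\delta$ from the curve to the whole fiber uniformly in $\e$ — is handled correctly by shrinking $\lambda$ via uniform continuity of $\nabla\gbs$ \emph{before} invoking \eqref{c1}, so there is no gap.
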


\begin{proof}
Let $\delta>0$ be given by Lemma \ref{lem:g_e}; that is, $\delta>0$ is dependent only on $D,E$ and $\gbs$. This means that \begin{equation}\label{eq:gbe01}
|\nabla \gbe|+|\gbe| \geq \delta \quad \text{in }\bar D \setminus  E,\end{equation}
for all $\e\in(0,\e_0],$ where $\e_0$ is small, dependent only on $\delta$ and the given data.  Set \[\lambda:=\min\{\delta/(2L_{\gbs}),\dist(\gbsc,\partial D), \dist(\gbsc,\partial E)\},\] where $L_{\gbs} >0$ is the Lipschitz constant of $\gbs.$ Note that $\lambda>0,$ by Assumption \ref{assu:d} and since $\gbs>0$ on $\partial D$.
We define $\MM:= \{x \in  D\setminus \bar E:  \dist(x,\gbsc)\leq \lambda/2\}$. Then, for each $x \in \MM$, there exists $\xx \in \gbsc$ so that $|x-\xx|\leq \frac{\lambda}{2}$. By the Lipschitz continuity of $\gbs$,  we have 
\[|\gbs(x)|=| \gbs(x)- \gbs(\xx)|\leq L_{\gbs} |x-\xx|\leq L_{\gbs} \frac{\lambda}{2}.\]

In view of the definition of $\lambda,$ the set $\MM$ is compact. From  \eqref{c1} we further deduce 
\[|\gbe(x)|\leq L_{\gbs} \lambda \leq \frac{\delta}{2} \quad \forall\, x \in \MM,\]for $\e>0$ small enough dependent on $\lambda$ and the given data; in particular, $\e>0$ is independent of $x$. The inequality \eqref{eq:gbe01} now implies that there exists $\e_0(\lambda)>0$ small enough, dependent only on $\lambda$ and the given data, with
 \begin{equation}
|\nabla \gbe| \geq  \delta/2 \quad \text{in }\MM \quad \forall\,\eps\in (0,\e_0(\lambda)].\end{equation}After rescaling $\lambda,$ this proves \eqref{grad_u} and the last statement. Then, the first desired assertion follows by a contradiction argument.
%
\end{proof}

\begin{remark}\label{rem:top}
The uniqueness result in Lemma \ref{lem:unique} has some essential implications. It basically says that, for $\e>0$ small enough, inside the $\lambda-$ region of $\gbs$, the mapping $\gbe-\e z$ has a strict sign in the interior of the curve $\gbec$ and this is different from its strict sign in the exterior of $\gbec$. If this were not the case,  an additional curve $\gbec$ would be found in the proximity of $\gbsc$, in view of Proposition \ref{prop:per}; however, this contradicts the assertion of Lemma \ref{lem:unique}.

Therefrom we also deduce that the number of components of $\gbecc$ is the same as the number of components of $\partial \O_{\gbs}$, i.e., it does not depend on $\e$ (for $\e$ small enough). Recall that  level curves $\gbec$ cannot be found outside the $\lambda-$ region of $\gbs$, as a consequence of Lemma \ref{lem:l} (provided that $\lambda$ and $\e$ take appropriate small values).
This allows us to conclude that $\gbe>\e z$ outside
 the closure of the following subdomain of class $C^2$
\[\widehat \O_{\gbe}:=\{x \in D \setminus \bar E:\gbe(x)<\e z\},\] see Proposition \ref{prop:per} and Lemma \ref{fs}. Similarly to $\O_{\gbs}$, the open set $\widehat \O_{\gbe}$ may have  holes, in which case their number is equal to the number of holes of $\O_{\gbs}$.  In other words, the topology of $\O_{\gbs}$ is preserved by the approximating domains $\widehat \O_{\gbe}$ for $\e$ small enough. Similar observations can be found in \cite[Prop.\,3.1]{oc_t}, in the special case $\gbe=\gbs+\e \kappa,$ where the function $\kappa$ belongs to a set similar to $\FF_{\ss}$. Finally, let us mention that the sequence of subdomains $\{\widehat \O_{\gbe}\}_\e$ parametrically converges to $\O_{\gbs}\setminus \bar E \ee$ \cite[Def.\,A.3.3]{nst_book}.
\end{remark}

\subsection{Convergence properties}\label{subsec2}

In all what follows, we are interested only in the components of $\{\gbs=0\}$ whose (finite) union equals the boundary of  $\O_{\gbs} \in \OO$, see \eqref{o}.   In view of Lemma \ref{lem:unique}, we know that, starting with some $\e>0,$ there is exactly one component of $\{\gbe=\e z\} $ in the $\lambda-$region surrounding $\gbsc$, provided that $\lambda$ is small enough. Note that the choice of $\e$ is dependent only on  $\lambda$ and the given data.

\begin{proposition}\label{lem:vec}
Suppose that Assumption \ref{assu:d} holds true.
Let $\{\gbs=0\}$ be a component of $\partial \O_{\gbs}$ with  periodicity  $T_{\ss}$ and let $\{\gbe=\e z\}$ be its approximating curve with periodicity $T_{\e}$, that is, $\e \in (0,\e_0(\lambda)],$ where $\lambda,\e_0(\lambda)>0$ are as in Lemma \ref{lem:unique}.
Then,
\begin{itemize}
\item[(i)]  For each $B>0$, it holds  $   \| x_{\e}  - \bar x\|_{C^1([0,B];\R^2)} \to 0 $ \text{as }$\e \searrow 0,$ where $x_\e$ and $\bar x$ are  the trajectories of the Hamiltonian systems \eqref{ham_e} and \eqref{ham} associated to $\{\gbe=\e z\}$ and $\{\gbs=0\}$, respectively;
\item[(ii)] $T_\e \to T_{\ss}$ as $\e \searrow 0.$
\end{itemize}
\end{proposition}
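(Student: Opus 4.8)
The plan is to prove (i) by the classical continuous-dependence theorem for ODEs (a Gr\"onwall estimate) and to deduce (ii) from (i) by a transversal-section (Poincar\'e return) argument together with the uniqueness of the approximating curve from Lemma \ref{lem:unique}. Throughout, I write the two Hamiltonian systems \eqref{ham_e} and \eqref{ham} as $x_\e'=F_\e(x_\e)$ and $\bar x'=\bar F(\bar x)$, where $F_\e:=\big(-\tfrac{\partial \gbe}{\partial x_2},\tfrac{\partial \gbe}{\partial x_1}\big)$ and $\bar F:=\big(-\tfrac{\partial \gbs}{\partial x_2},\tfrac{\partial \gbs}{\partial x_1}\big)$, and I choose the initial points compatibly, $x_0^\e\in\gbec$ with $x_0^\e\to \bar x_0$, which is possible by the Hausdorff convergence \eqref{h_d_c}.

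For (i), I would first record that the $C^2(\dbme)$-convergence \eqref{c1} yields $F_\e\to \bar F$ in $C^1(\dbme;\R^2)$; in particular $\bar F$ is Lipschitz on the compact set $\dbme$ with some constant $L$, and $\rho_\e:=\|F_\e-\bar F\|_{C(\dbme)}\to 0$. By Proposition \ref{prop:per} both trajectories remain in $\dme$, so these bounds apply along them. Passing to the integral form, subtracting, and splitting $F_\e(x_\e)-\bar F(\bar x)=(F_\e-\bar F)(x_\e)+\big(\bar F(x_\e)-\bar F(\bar x)\big)$ gives, for $t\in[0,B]$,
\[ |x_\e(t)-\bar x(t)|\le |x_0^\e-\bar x_0|+\rho_\e B + L\int_0^t |x_\e(s)-\bar x(s)|\,ds. \]
Gr\"onwall's lemma then yields $\|x_\e-\bar x\|_{C([0,B];\R^2)}\le(|x_0^\e-\bar x_0|+\rho_\e B)\,e^{LB}\to 0$, and the $C^1$-bound follows directly from the equations, since $|x_\e'(t)-\bar x'(t)|=|F_\e(x_\e(t))-\bar F(\bar x(t))|\le\rho_\e+L\,|x_\e(t)-\bar x(t)|\to 0$ uniformly on $[0,B]$.

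For (ii), I would use that $\bar x$ is periodic with minimal period $T_{\ss}$, traversing the simple closed curve $\gbsc$ once, and likewise $x_\e$ traverses $\gbec$ with minimal period $T_\e$. Since $\gbs(\bar x_0)=0$ and $\gbs\in\FF_{\ss}$ forces $|\nabla\gbs|+|\gbs|>0$ on $D$ (see \eqref{f_s}), we have $\bar F(\bar x_0)\neq 0$, so I fix a short transversal segment $\Sigma=\{x:\langle x-\bar x_0,n\rangle=0,\ |x-\bar x_0|<\rho\}$ with $\langle\bar F(\bar x_0),n\rangle\neq 0$ and $\rho$ small enough that $\gbsc\cap\Sigma=\{\bar x_0\}$. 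For the \emph{upper} bound, I apply (i) on $[0,T_{\ss}+1]$ to $\psi_\e(t):=\langle x_\e(t)-\bar x_0,n\rangle$, which converge in $C^1$ to $\bar\psi(t):=\langle\bar x(t)-\bar x_0,n\rangle$; as $\bar\psi(T_{\ss})=0$ and $\bar\psi'(T_{\ss})=\langle\bar F(\bar x_0),n\rangle\neq 0$, a perturbed-zero (implicit function) argument produces zeros $\tau_\e\to T_{\ss}$ of $\psi_\e$ with $x_\e(\tau_\e)\to\bar x_0$, hence $x_\e(\tau_\e)\in\Sigma$ for small $\e$. Choosing $x_0^\e$ to be the intersection of $\gbec$ with $\Sigma$ near $\bar x_0$ — unique by Lemma \ref{lem:unique} and transversality — forces $x_\e(\tau_\e)=x_0^\e$, so $\tau_\e$ is a return time and $T_\e\le\tau_\e$, giving $\limsup_\e T_\e\le T_{\ss}$. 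For the \emph{lower} bound, I first note $T_\e$ is bounded away from $0$: the speed $|x_\e'|=|\nabla\gbe|$ is bounded above by some $C$ (by \eqref{c1}), while $\operatorname{diam}(\gbec)\ge\operatorname{diam}(\gbsc)/2>0$ for small $\e$ (by \eqref{h_d_c}), and $\gbec=x_\e([0,T_\e])$ lies in a ball of radius $CT_\e$, whence $T_\e\ge\operatorname{diam}(\gbsc)/(4C)$. Then, along any subsequence with $T_\e\to T^*\in(0,T_{\ss})$, the uniform convergence from (i) gives $x_\e(T_\e)\to\bar x(T^*)$, while $x_\e(T_\e)=x_0^\e\to\bar x_0=\bar x(0)$; thus $\bar x(T^*)=\bar x(0)$ with $0<T^*<T_{\ss}$, contradicting the minimality of $T_{\ss}$. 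Combining the two bounds yields $T_\e\to T_{\ss}$.

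I expect the main obstacle to lie in (ii): ensuring that the transversal crossing time $\tau_\e$ is a genuine \emph{return} to the chosen base point $x_0^\e$ (rather than a crossing of $\Sigma$ at a nearby but distinct point of $\gbec$) and excluding spurious short returns. Both are resolved by combining the uniqueness of the approximating curve (Lemma \ref{lem:unique}) with the transversality of $\Sigma$, which guarantee that inside the small $\rho$-ball around $\bar x_0$ the curve $\gbec$ meets $\Sigma$ in exactly one point. Part (i), by contrast, is the routine Gr\"onwall continuous-dependence estimate.
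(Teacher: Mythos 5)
Your proposal is correct, and while part (i) coincides with the paper's argument (the same Gr\"onwall continuous-dependence estimate, differing only in which vector field's uniform Lipschitz constant is used in the splitting), your part (ii) takes a genuinely different route. The paper argues by compactness: it first bounds $T_\e$ from above by $j\,T_{\ss}$, $j<2$, \emph{importing this from the arguments of} \cite[Prop.\,2.6]{to}, and from below via the length estimate $\ell(x_\e)=\int_0^{T_\e}|\nabla \gbe(x_\e(s))|\,ds\geq c$; it then extracts $T_\e\to T\in(0,2T_\ss)$ and identifies $T$ as a period of $\bar x$ through $|\bar x(t)-\bar x(t+T_\e)|\to 0$ (a consequence of \eqref{gronw} and the $T_\e$-periodicity of $x_\e$), forcing $T=T_\ss$ by minimality. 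You instead obtain the sharp upper bound $\limsup_\e T_\e\leq T_\ss$ directly via a Poincar\'e-section return argument, and exclude subsequential limits in $(0,T_\ss)$ by minimality of the period, with the lower bound coming from the speed/diameter estimate. Your approach buys self-containedness --- it avoids the external citation for the a priori bound $T_\e<2T_\ss$ --- and it exploits Lemma \ref{lem:unique} (which the statement already invokes to define the approximating curve) in an essential geometric way; the paper's route is softer and shorter, needing no transversality construction. One detail in your version deserves spelling out: the uniqueness of the intersection $\gbec\cap\Sigma\cap B_\rho(\bar x_0)$ does not follow from Lemma \ref{lem:unique} alone (which only gives one closed curve in the $\lambda$-region), but from the strict monotonicity of $\gbe$ along $\Sigma$, obtained by combining the gradient bound \eqref{grad_u} with the $C^1$-closeness of $\nabla\gbe$ to $\nabla\gbs$ from \eqref{c1} and the transversality $\langle \bar F(\bar x_0),n\rangle\neq 0$; also note that $x_\e(\tau_\e)\in\gbec$ because $\gbe$ is a first integral of the Hamiltonian system \eqref{ham_e}. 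Finally, both proofs tacitly use that $T_\ss$ and $T_\e$ are \emph{minimal} periods, which holds since the curves are simple and traversed at nonvanishing speed.
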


\begin{proof}Fix $\bar x_0 \in \gbsc$ and choose $x_0^\e\in \gbec$ as the  projection of $\bar x_0$ on $\gbec.$ Then, thanks to \eqref{h_d_c}, it holds  \begin{equation}\label{init}
 |x_0^\e - \bar x_0 |\to 0 \quad \text{as }\e \to 0.\end{equation}

Now, consider the Hamiltonian systems \eqref{ham} and \eqref{ham_e} with initial conditions $\bar x_0$ and $x_0^\e$ as above. 
We abbreviate 
\[G_\e:=(-\frac{\partial \gbe}{\partial x_2} , \frac{\partial \gbe}{\partial x_1}): D \setminus \bar E \to \R^2, \quad \bar G:=(-\frac{\partial \gbs}{\partial x_2} , \frac{\partial \gbs}{\partial x_1}): D \setminus \bar E \to \R^2. \]
For all $t \in \R_{+}$, we have 
\begin{equation}
 \begin{aligned}
    | x_{\e} (t)-\bar x(t)|&\leq |x^{\e}_0-\bar x_0|+\int_0^t |G_\e(x_{\e} (s)) \pm 
    G_\e(\bar x (s))-\bar G(\bar x (s))|\,ds ,
\\&\leq |x^{\e}_0-\bar x_0|+c \int_0^t  | x_{\e} (s)-\bar x(s)|\,ds+\int_0^{t}   |G_\e(\bar x (s))-\bar G(\bar x (s))|\,ds,   \end{aligned}
\end{equation}
  where we used that $G_\e$ is Lipschitz continuous with Lipschitz constant $c>0,$ independent of $\e,$ see \eqref{c1}.
  Applying Gronwall's inequality yields 
  \begin{equation}\label{gronw}
 \begin{aligned}
 | x_{\e} (t)-\bar x(t)| &\leq C( |x^{\e}_0-\bar x_0|+\int_0^{t}   |G_\e(\bar x (s))-\bar G(\bar x (s))|\,ds)
 \\& \leq C_\e+tc_\e \quad \forall\,t \in \R_{+},\end{aligned}
\end{equation}where \[C_\e, c_\e \to 0 \ee.\]This is a consequence of \eqref{init} and \eqref{c1}; here we use that $s \mapsto \bar x(s)$ is contained in $D 
\setminus \bar E$.
Going back to \eqref{ham} and \eqref{ham_e}, where this time we use \eqref{gronw} and estimate as above, we get
  \begin{equation}\label{gronw'}
 \begin{aligned}
 | x_{\e} '(t)-\bar x'(t)| &= |G_\e(x_{\e} (t)) \pm 
    G_\e(\bar x (t))-\bar G(\bar x (t))|
    \\&\leq c\,|x_{\e}(t)-\bar x (t)|+  |G_\e(\bar x (t))-\bar G(\bar x (t))|
 \\&\leq c(C_\e+t\,c_\e)+c_\e  \quad \forall\,t \in \R_{+}, \end{aligned}
\end{equation}where \[C_\e, c_\e \to 0 \ee.\]
The estimates \eqref{gronw} and \eqref{gronw'} now give the first desired assertion (i). 

Let us now show (ii). By arguing as in the proof of {\cite[Prop.\,2.6]{to}} we see that $\{T_\e\}$ is uniformly bounded from above by $j\,T_{\ss}, j <2$. To see that $\{T_\e\}$ is uniformly bounded from below by {a strictly positive constant}, we notice that \[\ell(x_\e)\geq c,\]where $c>0$ is dependent only on the length of the curve $\bar x$; this is a consequence of \eqref{gronw}. 
From \eqref{ham_e} we infer that
\[\ell(x_\e)=\int_0^{T_\e} |\nabla \gbe(x_\e(s))| \,ds \geq c,\]cf.\,e.g.\cite{pressley}. Thus, in view of \eqref{c1},  there exists $C>0$, independent of $\e$, so that
\[T_\e \geq C.\]
All the above considerations then imply
\begin{equation}\label{T}
T_\e \to T \in (0,2 T_\ss) \ee,
\end{equation}
up to a subsequence. From \eqref{gronw}, we deduce that for all $t \in \R_{+}$ it holds 
 \begin{equation}
 \begin{aligned}
0=\lim_{\e \searrow 0} | x_{\e} (t)-\bar x(t)|=\lim_{\e \searrow 0} | x_{\e} (t+T_\e)-\bar x(t+T_\e)| =\lim_{\e \searrow 0} | x_{\e} (t)-\bar x(t+T_\e)|.\end{aligned}
\end{equation}Note that in the last identity we employed the fact that $s \mapsto x_\e(s)$ is periodic with periodicity $T_\e$.
The above convergence implies
\[\lim_{\e \searrow 0} | \bar x (t)-\bar x(t+T_\e)|=0 \quad \ff t \in \R_{+}.\]The continuity of $\bar x$ together with \eqref{T} then yields $ \bar x (t)=\bar x(t+T) \ \ff t \in \R_{+}.$ Hence, $T=T_{\ss},$ as $\bar x$ is periodic with periodicity $T_\ss$ and since $T \in (0,2 T_\ss),$ cf.\,\eqref{T}. Since the arguments apply for each convergent subsequence of $\{T_\e\}$, we can thus infer (i) from \eqref{T}. The proof is now complete.
  \end{proof}

  \begin{proposition}\label{int_conv}
  Suppose that Assumption \ref{assu:d} is true.
  Let $\{h_\e\}\subset C(\dbme;\R) $ be a sequence  that converges uniformly to $h$ on $\dbme$. Then,
  \begin{equation}\label{conv_h}
   \int_{\{\gbe=\e z\}} \frac{h_\e}{|\nabla \gbe|} \,d \xi \to  \int_{\{\gbs=0\}}\frac{h}{|\nabla \gbs|}\,d \xi \ee,\end{equation}
   where $\{\gbs=0\}\subset \dme$ is an arbitrary component of $\partial \O_{\gbs}$ and $\gbec \subset \dme$ is its approximating curve.
  Moreover,
  \begin{equation}\label{bound_h} \int_{\{\gbe=\e z\}} \frac{h_\e}{|\nabla \gbe|} \,d \xi \leq c,\end{equation}
where $c>0$ is independent of $\e$ and $z$.
  \end{proposition}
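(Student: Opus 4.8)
The plan is to collapse both curvilinear integrals to ordinary time integrals over the Hamiltonian trajectories, exploiting that the flow speed cancels the weight $1/|\nabla \gbe|$. Parametrizing $\gbec$ by the periodic solution $x_\e:[0,T_\e]\to\dme$ of \eqref{ham_e} and recalling that the arclength element reads $d\xi=|x_\e'(t)|\,dt$, the defining relations in \eqref{ham_e} give $|x_\e'(t)|=|\nabla \gbe(x_\e(t))|$, so that
\begin{equation}\label{eq:redu_e}
\int_{\{\gbe=\e z\}} \frac{h_\e}{|\nabla \gbe|}\,d\xi = \int_0^{T_\e} \frac{h_\e(x_\e(t))}{|\nabla \gbe(x_\e(t))|}\,|\nabla \gbe(x_\e(t))|\,dt = \int_0^{T_\e} h_\e(x_\e(t))\,dt,
\end{equation}
and, in exactly the same way for the limit curve parametrized by $\bar x$ via \eqref{ham},
\begin{equation}\label{eq:redu_sh}
\int_{\{\gbs=0\}} \frac{h}{|\nabla \gbs|}\,d\xi = \int_0^{T_\ss} h(\bar x(t))\,dt.
\end{equation}
Here it is essential that $x_\e$ is a \emph{regular} $C^1$ parametrization, i.e.\ $|x_\e'|=|\nabla \gbe|\geq \delta/2>0$ on a neighbourhood of $\gbec$ by \eqref{grad_u} (Lemma \ref{lem:unique}), so the integrand in \eqref{eq:redu_e} is well-defined and the trajectory traverses the single closed component once over $[0,T_\e]$.

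With \eqref{eq:redu_e}--\eqref{eq:redu_sh} at hand, the convergence \eqref{conv_h} reduces to passing to the limit in $\int_0^{T_\e} h_\e(x_\e(t))\,dt$. I would estimate, via the triangle inequality,
\begin{equation}
\Big| \int_0^{T_\e} h_\e(x_\e(t))\,dt - \int_0^{T_\ss} h(\bar x(t))\,dt \Big| \leq \int_0^{T_\e} |h_\e(x_\e(t)) - h(\bar x(t))|\,dt + \Big| \int_{T_\ss}^{T_\e} h(\bar x(t))\,dt \Big|.
\end{equation}
For the first summand I would split $|h_\e(x_\e) - h(\bar x)| \leq |h_\e(x_\e) - h(x_\e)| + |h(x_\e) - h(\bar x)|$: the first piece is bounded by $\|h_\e - h\|_{C(\dbme)}\to 0$ (uniform convergence of $\{h_\e\}$), while the second tends to zero uniformly on $[0,T_\e]$ since $h$ is uniformly continuous on the compact set $\dbme$ and $x_\e\to \bar x$ uniformly on bounded time windows by Proposition \ref{lem:vec}(i); crucially all trajectories stay in $\dme$ (Proposition \ref{prop:per}), so the arguments lie in the domain of $h,h_\e$. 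As $\{T_\e\}$ is bounded, multiplying these uniform bounds by $T_\e$ still yields $0$ in the limit. The second summand is controlled by $|T_\e-T_\ss|\,\|h\|_{C(\dbme)}\to 0$ using Proposition \ref{lem:vec}(ii). This establishes \eqref{conv_h}.

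For the uniform bound \eqref{bound_h}, the representation \eqref{eq:redu_e} yields at once
\begin{equation}
\int_{\{\gbe=\e z\}} \frac{h_\e}{|\nabla \gbe|}\,d\xi \leq T_\e\,\|h_\e\|_{C(\dbme)},
\end{equation}
where $\{h_\e\}$ is bounded in $C(\dbme)$ (being uniformly convergent), so it remains to bound $T_\e$ uniformly in $\e$ \emph{and} $z$. From $|\nabla \gbe|\geq \delta/2$ near the curves one gets $T_\e \leq \tfrac{2}{\delta}\,\ell(x_\e)$, and since for $z\in[0,1]$ one has $\gbec \subset \{|\gbe|\leq \e\} \subset V_\lambda$ by Lemma \ref{lem:l}, all these level curves lie in one fixed tubular neighbourhood of $\gbsc$; their lengths are then controlled uniformly by the fixed geometry of $\gbsc$ together with the two-sided bounds on $|\nabla \gbe|$. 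The constants $\delta,\e_0,\lambda$ are independent of $z$, as recorded after \eqref{eq:gbe+}, whence the constant $c$ in \eqref{bound_h} is independent of $\e$ and $z$.

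The main obstacle I anticipate is the bookkeeping around the \emph{moving} integration interval $[0,T_\e]$ and the \emph{moving} data $h_\e$: one must make the reduction \eqref{eq:redu_e} rigorous (regular parametrization, single traversal) and then combine the $C^1$-closeness of trajectories from Proposition \ref{lem:vec}, available only on a priori bounded time windows, with $T_\e\to T_\ss$ and $\|h_\e-h\|_{C(\dbme)}\to 0$ in order to control an integral whose length changes with $\e$. The second delicate point is the uniform-in-$z$ bound on $T_\e$ (equivalently, on the curve lengths), which rests precisely on the fact that the constants $\delta,\e_0,\lambda$ do not depend on $z$.
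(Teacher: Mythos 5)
Your proposal follows essentially the same route as the paper's proof: both reduce the curvilinear integrals to time integrals via the Hamiltonian parametrizations (using $|x_\e'(t)|=|\nabla \gbe(x_\e(t))|$, cf.\,\eqref{conv_h1}) and then estimate the difference of $\int_0^{T_\e}h_\e(x_\e)\,ds$ and $\int_0^{T_\ss}h(\bar x)\,ds$ by combining Proposition \ref{lem:vec}(i)--(ii) with the uniform convergence $h_\e \to h$, your tail term $\int_{T_\ss}^{T_\e}h(\bar x)\,ds$ being only a cosmetic variant of the paper's split at $\min\{T_\e,T_\ss\}$. For \eqref{bound_h}, your detour through the curve lengths $\ell(x_\e)$ and the tubular neighbourhood $V_\lambda$ is unnecessary (and its uniform length control is left somewhat loose): the uniform-in-$\e$ upper bound on $T_\e$, together with the $z$-independence of the constants $\delta,\e_0$ noted after \eqref{eq:gbe+}, is exactly what Proposition \ref{lem:vec} supplies, and this is all the paper invokes.
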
\begin{proof}
  We fix $\lambda>0$ and $\e_0(\lambda)>0$  as in Lemma \ref{lem:unique}.
  Throughout this proof, $\e \in (0,\e_0(\lambda)]$.  We begin by noticing that the integrands in \eqref{conv_h} are well-defined in view of \eqref{eq:gbe} and since $\gbs \in \FF_{\ss}$, see \eqref{f_s}.
As a result of Proposition \ref{prop:per}, we have 
\begin{equation}\label{conv_h1}
\begin{aligned}
  \int_{\gbe=\e z} \frac{h_\e}{|\nabla \gbe|} \,d \xi  =\int_0^{T_{\e}}\frac{h_\e(x_\e(s))}{|\nabla \gbe(x_\e(s))|}|x_\e'(s)| \,ds
 =\int_0^{T_{\e}}{h_\e(x_\e(s))} \,ds,  \end{aligned}
  \end{equation}
where $T_\e>0$ is the periodicity of $\gbec.$ Further, we denote by $\bar x:(-\infty,\infty) \to \dme$ the solution of the Hamiltonian system \eqref{ham} associated to the closed curve $\gbsc$ with periodicity $T_{\ss}>0.$
Since $h_\e \to h$ in $C(\dbme)$, by assumption, and in view of Proposition \ref{lem:vec}, it holds
  \begin{equation}\label{conv_h2}\begin{aligned}
 & \int_0^{T_{\e}}{h_\e(x_\e(s))} \,ds-\int_0^{T_{\ss}}{h(\bar x(s))} \,ds \\&\quad =
   \int_0^{\min\{T_{\e},T_{\ss}\}}[{h_\e(x_\e(s))}-{h(\bar x(s))} ] \,ds
   \\&\qquad +\int_{\min\{T_{\e},T_{\ss}\}}^{T_\e}{h_\e(x_\e(s))} \,ds -\int_{\min\{T_{\e},T_{\ss}\}}^{T_{\ss}}{h(\bar x(s))} \,ds
   \\&\quad \leq C \|h_\e -h\|_{C(\dbme)}+ \int_0^{C}|{h(x_\e(s))}-{h(\bar x(s))} | \,ds
   \\&\qquad +(T_\e-{\min\{T_{\e},T_{\ss}\}}) \|h_\e\|_{C(\dbme)}
   \\&\qquad +(T_{\ss}-{\min\{T_{\e},T_{\ss}\}}) \|h\|_{C(\dbme)}
   \\&\quad \to 0 \ee,
   \end{aligned}\end{equation}
   where $C>0$ is  a constant independent of $\e.$
   By Proposition \ref{prop:per}, cf.\,\eqref{ham}, we deduce 
    \begin{equation}
   \int_{\gbs=0}\frac{h}{|\nabla \gbs|}\,d \xi   =\int_0^{T_{\ss}}\frac{h(\bar x(s))}{|\nabla \gbs(\bar x(s))|}|\bar x'(s)| \,ds
 =\int_0^{T_{\ss}}{h(\bar x(s))} \,ds,    \end{equation}
and the desired convergence \eqref{conv_h} now follows from  \eqref{conv_h1} and  \eqref{conv_h2}. Finally, we see that \eqref{bound_h} is a consequence of \eqref{conv_h1}, Proposition \ref{lem:vec} and the uniform boundedness  of $\|h_\e\|_{C(\dbme)}$ w.r.t.\,$\e$.
  \end{proof}
  
By building a finite sum over the terms associated to
each component of $\{x \in D \setminus \bar E: \gbe(x)=\e z\} $, we arrive at the following
consequence of Proposition \ref{int_conv}:
  \begin{corollary}\label{int_conv0}
  Suppose that Assumption \ref{assu:d} is true.
  Let $\{h_\e\}\subset C(\dbme;\R) $ be a sequence  that converges uniformly to $h$ on $\dbme$. Then,
  \begin{equation}\label{conv_h0}
   \int_{\{x \in D \setminus \bar E: \gbe(x)=\e z\} } \frac{h_\e}{|\nabla \gbe|} \,d \xi \to  \int_{\partial \O_{\gbs}}\frac{h}{|\nabla \gbs|}\,d \xi \ee.\end{equation}
  Moreover,
  \begin{equation}\label{bound_h0} \int_{\{x \in D \setminus \bar E: \gbe(x)=\e z\} } \frac{h_\e}{|\nabla \gbe|} \,d \xi \leq c,\end{equation}
where $c>0$ is independent of $\e$ and $z$.
  \end{corollary}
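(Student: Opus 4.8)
The plan is to reduce the statement to the single-component case already settled in Proposition \ref{int_conv} and then to sum over the finitely many components. First I would recall that, by Proposition \ref{prop:per} together with the Remark following it, the boundary $\partial \O_{\gbs}$ of the relevant component of $\O_{\gbs}$ is a finite union of pairwise disjoint closed $C^2$ curves $\Gamma_1,\dots,\Gamma_N$, where the number $N$ is finite and depends only on the fixed topology of $\O_{\gbs}$; in particular $N$ is independent of $\e$ and of $z$. Correspondingly, Proposition \ref{prop:per} tells us that the level set $\gbecc$ is itself a finite union of pairwise disjoint closed $C^2$ curves.

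The structural core of the argument is the bijective correspondence between these two families of curves for $\e$ small. For each fixed $k$, Lemma \ref{ex} produces an approximating curve of $\Gamma_k$ inside a narrow region around $\Gamma_k$, while Lemma \ref{lem:unique} guarantees that there is \emph{exactly one} such approximating component in the $\lambda$-region surrounding $\Gamma_k$; moreover, by Remark \ref{rem:top}, the total number of components of $\gbecc$ equals $N$ once $\e$ is small enough. Since there are only finitely many components, I would take $\e$ below the minimum of the finitely many thresholds $\e_0(\lambda)$ furnished by Lemma \ref{lem:unique}, thereby obtaining a single $\e_0>0$ for which the components of $\gbecc$ are exactly $\Gamma_1^\e,\dots,\Gamma_N^\e$, with $\Gamma_k^\e$ the unique approximating curve of $\Gamma_k$.

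Because the components are pairwise disjoint, the curvilinear integral on each side then splits into a finite sum,
\[
\int_{\gbecc} \frac{h_\e}{|\nabla \gbe|}\,d\xi = \sum_{k=1}^N \int_{\Gamma_k^\e} \frac{h_\e}{|\nabla \gbe|}\,d\xi, \qquad \int_{\partial \O_{\gbs}} \frac{h}{|\nabla \gbs|}\,d\xi = \sum_{k=1}^N \int_{\Gamma_k} \frac{h}{|\nabla \gbs|}\,d\xi,
\]
and I would apply Proposition \ref{int_conv} term by term. For each $k$, the curve $\Gamma_k^\e$ is precisely the approximating curve of the single component $\Gamma_k$, and since $h_\e \to h$ uniformly on $\dbme$, the convergence \eqref{conv_h} yields $\int_{\Gamma_k^\e}\frac{h_\e}{|\nabla \gbe|}\,d\xi \to \int_{\Gamma_k}\frac{h}{|\nabla \gbs|}\,d\xi$. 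Summing the finitely many convergences gives \eqref{conv_h0}. For the uniform estimate \eqref{bound_h0}, I would invoke the bound \eqref{bound_h} on each of the $N$ terms, each bounded by a constant independent of $\e$ and $z$; since $N$ is likewise independent of $\e$ and $z$, the sum of these finitely many constants furnishes the desired $c>0$ in \eqref{bound_h0}.

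As for the expected difficulty: the analytic substance is entirely contained in Proposition \ref{int_conv} (and, upstream, in Proposition \ref{lem:vec} on the convergence of trajectories and periodicities), so the only genuinely delicate point here is the bookkeeping that legitimizes the decomposition, namely securing a single $\e_0$ for which the component-to-component correspondence is a bijection. This rests squarely on the uniqueness statement of Lemma \ref{lem:unique} and the topology-preservation of Remark \ref{rem:top}, and I expect it to be the main (albeit modest) obstacle; once it is in place, the splitting of the integrals and the passage to the limit are immediate.
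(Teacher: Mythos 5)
Your proof is correct and takes essentially the same route as the paper, which derives the corollary from Proposition \ref{int_conv} precisely by summing over the finitely many components of $\{x \in D \setminus \bar E: \gbe(x)=\e z\}$. The component-to-component bijection that you justify via Lemma \ref{ex}, Lemma \ref{lem:unique} and Remark \ref{rem:top} is exactly the bookkeeping the paper leaves implicit, so your write-up only makes explicit what the paper takes for granted.
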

\begin{remark}
As already mentioned, all the results in this section stay true if $\{\gbe\}$ is replaced by  an arbitrary sequence of $C^2(\bar D \setminus E)$-functions satisfying the convergence \eqref{c1}. Moreover, instead of $\gbec$ we may consider curves $\{\gbe=z_\e\}$ where $\{z_\e\}$ is a real-valued sequence approaching $0$ as $\e \searrow 0$. To the best of our knowledge, similar findings to the one proven in this subsection can  be found in the literature only for particular cases. For instance, the result in Proposition \ref{lem:vec} resembles the one in \cite[Prop.\,2.6]{to} if $\gbe:=\gbs+\e \kappa,$ where  the mapping $\kappa$ belongs to a set similar to $\FF_{\ss}.$ 
For the convergence \eqref{conv_h} in the special case when $h_\e=h$ and $\gbe=\gbs$, we refer to \cite[App.\,6]{cm}.
\end{remark}

\section{Passage to the limit $\e \searrow 0$}\label{pas}
In this section, we turn our attention to the original control problem \eqref{p_shh}, which we recall  for convenience:
\begin{equation}\tag{$P$}\label{p}
 \left.
 \begin{aligned}
  \min_{g \in \FF_{\mathfrak{sh}}} \quad & \int_E (y(x)-y_d(x))^2 \; dx+\alpha\,\int_{D}1-H(g)\,dx, 
\\     \text{s.t.} \quad & 
  \begin{aligned}[t]
    -\laplace y + \beta(y)&=f \quad \text{in }\O_g,
   \\y&=0  \quad \text{on } \partial \O_g. \end{aligned} \end{aligned}
 \quad \right\}
\end{equation}
Our next goal is to establish a  limit optimality system for \eqref{p}, by letting $\e \searrow 0$  in the optimality conditions from Lemma \ref{lem:tool}. Then, we will return to the shape optimization problem \eqref{p_sh} and establish a final optimality system for the optimal shape (Theorem \ref{thmm0} below).

\begin{theorem}[Limit optimality system]\label{thmm}
Let $\bar g_{\mathfrak{sh}} \in \FF_{\mathfrak{sh}}$ be a local optimum in the $\ld$ sense of the control problem \eqref{p_shh} with associated state $\yy_\mathfrak{sh}:=\SS(\bar g_{\mathfrak{sh}})\in H^1_0(\O_{\bar g_{\mathfrak{sh}}}) \cap H^2(\O_{\bar g_{\mathfrak{sh}}})$.  If {Assumptions \ref{gbs} and  \ref{assu:d} hold true}, then there exists an adjoint state $p  \in  H^1_0(\O_{\bar g_{\mathfrak{sh}}})\cap H^2(\O_{\bar g_{\mathfrak{sh}}})$ and a non-negative multiplier $\zeta \in \li$ such that 
\begin{subequations} \label{eq:thmm0}  \begin{gather}
-\laplace p+\zeta p=2\chi_E(\yy_\mathfrak{sh}-y_d) \text{ a.e.\ in }\O_{\bar g_{\mathfrak{sh}}}, \quad p=0 \text{ on }\partial \O_{\bar g_{\mathfrak{sh}}}, \label{adj_s0}
\\\zeta(x) \in [\min\{\beta_-'(\yy_\mathfrak{sh}(x)),\beta_+'(\yy_\mathfrak{sh}(x))\},\max\{\beta_-'(\yy_\mathfrak{sh}(x)),\beta_+'(\yy_\mathfrak{sh}(x))\}] \text{ a.e.\,in } D,\label{clarke0_s}
\\\alpha \delta_{\gbs} \in \mathcal{Q}(\gbs,\ybs,p),\label{grad} 
\end{gather}\end{subequations}
where $\delta_{\gbs} \in C(\bar  D \setminus  E)^\star$ corresponds to a measure concentrated on $\partial \O_{\gbs}$, and it is  defined as 
   \begin{equation}\label{delta}
 \delta_{\gbs}(\phi):=    \int_{\gbs=0} \frac{  \phi}{|\nabla \gbs|}\,d \xi \quad \forall\,\phi \in C(\bar  D \setminus  E)\end{equation}
and 
 \begin{equation}\label{q}
 \begin{aligned}
 \mathcal Q & (\gbs,\ybs,p) :=\{w \in C(\bar  D \setminus  E)^\star: -\frac{1}{\e}\mathfrak h_\e  \widetilde y_\e \widetilde p_\e\weakly w \text{ in }H^s( D \setminus \bar E)^\star, \\&\text{ where } \mathfrak h_\e\weakly  \delta_{\gbs} \  \text{in }H^s( D \setminus \bar E)^\star,\  \widetilde y_\e \weakly \ybs \  \text{in }H_0^1(D),\ \widetilde p_\e \weakly p \  \text{in }H_0^1(D) \}.\end{aligned}
 \end{equation}
Moreover, if \eqref{f01} is true in the standing Assumption \ref{yd_f0}, then
\begin{equation}\label{p>=0}
\ybs \geq y_d \text{ in }E, \quad \ybs \geq 0 \text{ in }\O_{\gbs} \setminus E, \quad p \geq 0  \text{ in }\O_{\gbs}.
\end{equation}Otherwise, i.e.,
if  \eqref{f00} is true, then
\begin{equation}\label{p<=0}
\ybs \leq y_d \text{ in }E, \quad \ybs \leq 0 \text{ in }\O_{\gbs} \setminus E, \quad p \leq 0   \text{ in }\O_{\gbs}.
\end{equation}
\end{theorem}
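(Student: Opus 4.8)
The plan is to take the sequence $\{\gbe\}$ of local optima of \eqref{p10} supplied by Theorem \ref{thm:cor}, with associated states $\yy_\e$, adjoints $p_\e$ and multipliers $\zeta_\e$ from Lemma \ref{lem:tool}, and let $\e\searrow 0$ in \eqref{lem:adj}--\eqref{lem:grad_f0}. First I would derive uniform bounds. Testing the adjoint equation \eqref{lem:adj} with $p_\e$ and using that $\zeta_\e\geq 0$ (monotonicity of $\beta$ forces $\beta'_\pm\geq 0$) and that $\tfrac1\e H_\e(\gbe)p_\e^2\geq 0$, together with the $\ld$-bound on $2\chi_E(\yy_\e-y_d)$ coming from the uniform $C(\bar D)$-bound \eqref{lem:S}, gives $\|p_\e\|_{\hd}\leq c$ and, in addition, $\tfrac1\e\int_D H_\e(\gbe)p_\e^2\,dx\leq c$. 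The intervals in \eqref{lem:clarke0} together with the uniform bound on $\yy_\e$ show $\{\zeta_\e\}$ is bounded in $\li$. Hence, up to a subsequence, $p_\e\weakly p$ in $\hd$ (so $p_\e\to p$ in $\ld$ by Rellich), $\zeta_\e$ converges weakly-$\star$ in $\li$ to some $\zeta\geq 0$, while $\yy_\e\weakly\ybs$ in $\hd$ and $\gbe\to\gbs$ in $C^2(\bar D\setminus E)$ are already provided by Theorem \ref{thm:cor} and \eqref{c1}.

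Next I would pass to the limit in \eqref{lem:adj} and \eqref{lem:clarke0}. The bound $\tfrac1\e\int_D H_\e(\gbe)p_\e^2\,dx\leq c$ forces $\int_D H_\e(\gbe)p_\e^2\,dx\to 0$; since $H_\e(\gbe)\to 1$ a.e.\ on $\{\gbs>0\}=D\setminus\clos{\ogs}$ and $p_\e\to p$ in $\ld$, Fatou's lemma yields $p=0$ a.e.\ outside $\ogs$, hence $p|_{\ogs}\in H^1_0(\ogs)$. For $\varphi\in C^\infty_c(\ogs)$, Lemma \ref{lem:cp} (or directly \eqref{c1}) gives $\gbe<0$ on $\operatorname{supp}\varphi$ for $\e$ small, so the penalty term disappears from the weak form of \eqref{lem:adj} tested with $\varphi$; letting $\e\searrow 0$, using the weak-$\star$ convergence of $\zeta_\e$ against the strongly $\ld$-convergent $p_\e\varphi$, produces $-\laplace p+\zeta p=2\chi_E(\ybs-y_d)$ in $\ogs$, with $H^2$-regularity by the $C^2$-smoothness of $\ogs$ (Lemma \ref{fs}); this is \eqref{adj_s0}. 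The inclusion \eqref{clarke0_s} follows by passing to an a.e.\ convergent subsequence of $\{\yy_\e\}$ and combining the weak-$\star$ limit of $\{\zeta_\e\}$ with the closedness of the graph of the interval-valued map $z\mapsto[\min\{\beta'_-(z),\beta'_+(z)\},\max\{\beta'_-(z),\beta'_+(z)\}]$ guaranteed by Assumption \ref{assu:reg0}, through a Mazur-lemma argument.

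The crux --- and the step I expect to be the main obstacle --- is the limit of the gradient equation \eqref{eq_d-e}, yielding \eqref{grad}. The delicate terms carry the concentrating factor $H_\e'(\gbe)$. Using the coarea formula, the substitution $t=\e z$ and the identities $\e H_\e'(\e z)=6z(1-z)$ (cf.\,\eqref{reg_h'}) and $\int_0^1 6z(1-z)\,dz=1$, I would rewrite, for $\phi\in\hde\embed C(\bar D\setminus E)$,
\[\int_{D\setminus E}H_\e'(\gbe)\,\phi\,dx=\int_0^1 6z(1-z)\Big(\int_{\gbec}\frac{\phi}{|\nabla\gbe|}\,d\xi\Big)dz.\]
Corollary \ref{int_conv0} shows that, for each fixed $z$, the inner curvilinear integral converges to $\delta_{\gbs}(\phi)$ (cf.\,\eqref{delta}) and is uniformly bounded, so dominated convergence gives $H_\e'(\gbe)\weakly\delta_{\gbs}$ in $\hde^\star$. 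The remaining terms of \eqref{eq_d-e} are harmless: $(\e p_\e,\phi)_{L^2(D\setminus E)}\to0$ and $(\gbe-\gbs,\phi)_{L^2(D\setminus E)}+(\gbe-\gbs,\phi)_{\hde}\to0$ by Theorem \ref{thm:cor}. Solving \eqref{eq_d-e} for the surviving product term then shows that $-\tfrac1\e H_\e'(\gbe)\yy_\e p_\e$ converges in $\hde^\star$, necessarily to $\alpha\delta_{\gbs}$. With the choices $\mathfrak h_\e:=H_\e'(\gbe)\weakly\delta_{\gbs}$, $\widetilde y_\e:=\yy_\e\weakly\ybs$, $\widetilde p_\e:=p_\e\weakly p$, this is precisely $\alpha\delta_{\gbs}\in\mathcal Q(\gbs,\ybs,p)$. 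The subtlety is that the product $-\tfrac1\e H_\e'(\gbe)\yy_\e p_\e$ cannot be handled directly --- it couples a measure-like concentrating term with two only weakly convergent factors --- so its limit is identified only indirectly, by difference, which is exactly why \eqref{grad} must be phrased as the set-valued inclusion \eqref{q} rather than as an equation.

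Finally, the sign conditions follow from a comparison argument. Under \eqref{f01}, I would take the comparison function $\psi:=y_d$ on $E$ extended by $0$ to $\ogs$; because $\gbs<0$ on $\clos E$ (Assumption \ref{assu:d}) we have $\clos E\subset\ogs$ and $\psi\in H^1_0(\ogs)$, and \eqref{f01} renders $\psi$ a subsolution of the monotone operator $-\laplace+\beta$ on $\ogs$, so that the weak comparison principle applies. This gives $\ybs\geq\psi$, i.e.\ $\ybs\geq y_d$ in $E$ and $\ybs\geq 0$ in $\ogs\setminus E$. Consequently the right-hand side $2\chi_E(\ybs-y_d)$ of \eqref{adj_s0} is non-negative, and since $\zeta\geq 0$ the weak maximum principle for $-\laplace+\zeta$ yields $p\geq 0$ in $\ogs$, establishing \eqref{p>=0}. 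The case \eqref{f00} is identical with all inequalities reversed, giving \eqref{p<=0}.
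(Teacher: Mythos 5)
Your steps (I)--(III) follow the paper's proof in all essentials. The uniform bounds are obtained the same way (your test of \eqref{lem:adj} with $p_\e$ is equivalent to the paper's test with $\e p_\e$ and yields \eqref{hgp}); the localization $p=0$ a.e.\ on $D\setminus\ogs$, the removal of the penalty term via Lemma \ref{lem:cp}, and the limit passage to \eqref{adj_s0} are identical (one small point: ``$p=0$ outside $\ogs$ hence $p|_{\ogs}\in H^1_0(\ogs)$'' is not automatic for an arbitrary open set; as in the paper it uses \cite[Thm.\,2.3]{t_c} together with the $C^2$ regularity of $\ogs$). For the gradient relation you take exactly the paper's route: coarea formula, the rescaling $H_\e'(v)=\tfrac1\e\Psi(v/\e)$ with $\int_0^1\Psi=1$, pointwise convergence plus the uniform bound of the curvilinear integrals from Corollary \ref{int_conv0}, dominated convergence, and identification of the limit of $-\tfrac1\e H_\e'(\gbe)p_\e\yy_\e$ ``by difference'' from \eqref{eq_d-e}. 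Only for the Clarke inclusion \eqref{clarke0_s} do you argue differently (Mazur's lemma plus graph-closedness of $z\mapsto\partial_\circ\beta(z)$, instead of the paper's route through the generalized directional derivative $\beta^\circ$, generalized Fatou, and upper semicontinuity of $\beta^\circ$); both arguments are correct and rest on the same structural consequence of Assumption \ref{assu:reg0} via \cite[Thm.\,7.3.12]{schirotzek}, so this is a harmless variant.

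Step (iv), however, contains a genuine gap. You claim that, under \eqref{f01}, the extension by zero $\psi$ of $y_d$ is a subsolution of $-\laplace+\beta$ on $\ogs$. In general it is not: for $y_d\in H^1_0(E)$ with $\laplace y_d\in L^2(E)$, the distributional Laplacian of the zero extension carries a surface part,
\[
-\laplace\psi+\beta(\psi)=\chi_E\bigl(-\laplace y_d+\beta(y_d)\bigr)+\chi_{\ogs\setminus E}\,\beta(0)+(\partial_\nu y_d)\,\delta_{\partial E}\quad\text{in }\mathcal{D}'(\ogs),
\]
with $\nu$ the outward normal of $E$; this is seen by testing with $0\le\varphi\in C_c^\infty(\ogs)$ whose support crosses $\partial E$ and integrating $\int_E\nabla y_d\cdot\nabla\varphi\,dx$ by parts, using $y_d=0$ on $\partial E$. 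Condition \eqref{f01} dominates only the absolutely continuous part; the boundary term $\int_{\partial E}\partial_\nu y_d\,\varphi\,d\sigma$ has no prescribed sign (no sign is assumed on $y_d$, so $\partial_\nu y_d$ may well be positive on parts of $\partial E$; moreover, Assumption \ref{assu:E} gives so little regularity of $\partial E$ that the normal trace itself is delicate). Consequently the weak comparison principle does not yield $\ybs\ge\psi$ from your argument. The paper avoids this entirely by comparing at the penalized level: it introduces the auxiliary problem \eqref{eq_yde} on the fixed domain $D$, whose right-hand side is the genuine $L^2$ function $\chi_E(-\laplace y_d+\beta(y_d))+\chi_{D\setminus E}\beta(0)+\e\gbe$ (no extension, hence no surface measure), so that \eqref{f01} and the monotonicity of $-\laplace+\beta+\tfrac1\e H_\e(\gbe)\,\cdot$ give $\yy_\e\ge y_{d,\e}$ a.e.\ in $D$ directly; the sign conditions then follow by letting $\e\searrow 0$, using $y_{d,\e}\weakly y_{d,0}$ in $H^1_0(D)$. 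To repair your argument you would either have to adopt this $\e$-level comparison or additionally justify $\partial_\nu y_d\le 0$ on $\partial E$, which the standing assumptions do not provide. (Your final step, $p\ge 0$ from $\zeta\ge 0$ and the nonnegative right-hand side of \eqref{adj_s0} via the maximum principle, coincides with the paper's and is fine once $\ybs\ge y_{d,0}$ is secured.)
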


\begin{proof}(I) \textit{Uniform boundedness.} By Theorem \ref{thm:cor}, there exists a sequence $\{\bar g_\eps\} \subset \FF$ of local optima of \eqref{p10} with 
\begin{equation}\label{conv_g_e}
\bar g_\eps \to \bar g_{\mathfrak{sh}} \quad \text{in }\WW, \quad \text{ as }\eps \searrow 0
\end{equation}
{and}
\begin{equation}\label{conv_y_e}
\bar y_\e:=S_\eps(\gbe) \weakly \yy_\mathfrak{sh} \text{ in }\hd, \quad \text{ as }\eps \searrow 0.
\end{equation}Let $\e>0$ be arbitrary, but fixed. According to Lemma \ref{lem:tool}, there exists an adjoint state 
$p_\e \in \hd \cap H^2(D)$ and a multiplier $\zeta_\e \in \li$ so that it holds 
\begin{subequations} \label{eq:thmm}  \begin{gather}
-\laplace p_\eps+\zeta_\eps p_\eps+\frac{1}{\eps}H_\eps(\bar g_\eps)p_\eps=2\chi_E(\bar y_\eps-y_d) \text{ a.e.\ in }D, \quad p_\eps=0 \text{ on }\partial D, \label{adj_s}
\\\zeta_\eps(x) \in [\min\{\beta_-'(\yy_\eps(x)),\beta_+'(\yy_\eps(x))\},\max\{\beta_-'(\yy_\eps(x)),\beta_+'(\yy_\eps(x))\}] \text{ a.e.\,in } D,\label{clarke0_s0}
\\( p_\eps[\eps-\frac{1}{\eps}H_\eps'(\bar g_\eps)\bar y_\eps]-\alpha H_\eps'( \bar g_\eps),h-\bar g_\eps)_{\ld}+(\bar g_\eps-\bar g_{\mathfrak{sh}},h-\bar g_\eps)_{\WW}\geq 0\quad \forall\,h\in \FF. \label{grad_f0_s}
   \end{gather}\end{subequations}Since $\beta$ is monotonically increasing, we infer from \eqref{clarke0_s0} that $\zeta_\e \geq 0\ \ae D.$ As $H_\e(\gbe)\geq  0$ (see \eqref{reg_h}), we can thus conclude from \eqref{adj_s} and \eqref{conv_y_e} that  
\[\|p_\e\|_{\hd \cap \li} \leq c,\]where $c>0$ is independent of $\eps;$ see e.g.\,\cite[Thm.\,4.8]{troe}.
Thus, there exists  $p \in \hd \cap \li$ so that 
\begin{equation}\label{pw}
p_\e \weakly p \quad \text{in }\hd \quad \text{ as }\eps \searrow 0, \end{equation}
\begin{equation}\label{pw0}
p_\e \weakly^\star p \quad \text{in }\li \quad \text{ as }\eps \searrow 0, \end{equation} for a (not relabeled) subsequence.
Further, by  \eqref{eq:flip}, we have 
\begin{equation}\label{bet}
\|\beta'(\bar y_\eps;\pm 1)\|_{\li}\leq L_{M},\end{equation}
where $M>0$ is such that $\|\bar y_\eps\|_{\li} \leq M.$ Thanks to \eqref{lem:S} and \eqref{conv_g_e}, $M>0$ can be chosen independent of $\eps$. Hence, in light of \eqref{clarke0_s} and \eqref{bet}, we can extract a (not relabeled) subsequence of $\{\zeta_\eps\}$ with
\begin{equation}\label{conv_xi}\zeta_\e \weakly^\star \zeta \quad \text{in }\li \quad \text{ as }\eps \searrow 0.\end{equation}

(II) \textit{Convergence analysis.}
\\(i) \textit{Adjoint equation.} 
Next, we want to pass to the limit in \eqref{adj_s}. We follow  the ideas of the proof of \cite[Lem.\,4.11]{p1}. Testing \eqref{adj_s} with $\eps p_\eps$, and using the uniform boundedness of $\{p_\e\}, \{\zeta_\e\}$ and $\{\bar y_\e\}$, implies
  \begin{equation}\label{hgp}
  \int_D H_\eps(\gbe)p_\eps^2 \,dx \to 0 \text{ as }\eps \searrow 0.
  \end{equation}
 As a result of \eqref{conv_g_e}, we further  have 
\[\mu\{x \in D:\bar g_{\mathfrak{sh}}<0 \text{ and }\bar g_\e \geq 0\} \to 0 \quad \text{as }\e \searrow 0,\]
\[\mu\{x \in D:\bar g_{\mathfrak{sh}}>0 \text{ and }\bar g_\e-\e \leq 0\} \to 0 \quad \text{as }\e \searrow 0.\]
Therefore,
\[\lim_{\e \to 0}  \int_{\{\bar g_{\mathfrak{sh}}<0, \bar g_\e\geq 0\}} H_\eps(\bar g_\e)p_\eps^2 \,dx+\lim_{\e \to 0}  \int_{\{\gbs>0,\bar g_\e\leq \e\}} H_\eps(\bar g_\e)p_\eps^2 \,dx=0.\]
  Thus, by  \eqref{hgp}, Lemma \ref{fs}, \eqref{reg_h} and \eqref{pw} combined with the compact embedding $H^1(D) \embed \embed \ld$, one deduces
\begin{equation}\label{h_y}
\begin{aligned}
0&=\lim_{\e \to 0} \int_D H_\eps(\bar g_\e)p_\eps^2 \,dx 
\\&=\lim_{\e \to 0}  \int_{\{\gbs<0, \bar g_\e<0\}} H_\eps(\bar g_\e)p_\eps^2 \,dx+\lim_{\e \to 0}  \int_{\{\gbs>0,\bar g_\e>\e\}} H_\eps(\bar g_\e)p_\eps^2 \,dx
\\&=\lim_{\e \to 0}  \int_{\{\gbs>0,\bar g_\e>\e\}} p_\eps^2 \,dx=\lim_{\e \to 0}  \int_{\{\gbs>0\}} p_\eps^2 \,dx
=\lim_{\e \to 0}  \int_{D\setminus \ogs} p_\eps^2 \,dx
\\&=\int_{D\setminus  \O_{\gbs}}  p^2 \,dx .\end{aligned}
\end{equation}
%
Hence, $p=0\ \ae D\setminus   \O_{\gbs}$. In view of \cite[Thm.\,2.3]{t_c} (applied for the component of $\ogs$ that contains the set $E$), we have   
\begin{equation}\label{pho1}
p \in H^1_0( \ogs).
\end{equation}
Note that here we use that $\ogs \in \OO$ is a domain of class $C$ (even $C^2$, see Lemma \ref{fs}.\ref{class}).
Testing \eqref{adj_s} with a fixed $\phi \in C_c^\infty( \O_{\bar g_{\mathfrak{sh}}})$, $\ogs \in \OO$, further leads to 
$$\int_{ \O_{\bar g_{\mathfrak{sh}}}} \nabla p_\eps \nabla \phi \,dx +\int_{ \O_{\bar g_{\mathfrak{sh}}}} \zeta_\eps p_\eps \phi \,dx +\int_{ \O_{\bar g_{\mathfrak{sh}}}}\frac{1}{\eps}H_\eps(\bar g_\eps)p_\eps \phi \,dx=\int_{ \O_{\bar g_{\mathfrak{sh}}}} 2\chi_E(\bar y_\eps-y_d)\phi \,dx.$$
Since $\WW \embed \ld \cap L^\infty(D \setminus \bar E)$, one obtains  \[\gbe \to \bar g_{\mathfrak{sh}} \text{ in }\ld \cap L^\infty(D \setminus \bar E)\quad \text{as }\e \searrow 0,\]in light of \eqref{conv_g_e}. We note that  there exists a compact subset $\tilde K$ of $\O_{\bar g_{\mathfrak{sh}}}$ so that 
$\phi =0$ in $\ogs \setminus \tilde K$. 
Hence, by Lemma \ref{lem:cp} and \eqref{reg_h}, the third term in the above variational identity vanishes for $\e>0$ small enough, independent of $x$ (dependent on $\tilde K$, and thus on $\phi$). 
 Passing to the limit $\eps \searrow 0$, where one relies on \eqref{pw}, \eqref{conv_xi}, and \eqref{conv_y_e} then results in
$$\int_{ \O_{\bar g_{\mathfrak{sh}}}} \nabla p \nabla \phi \,dx +\int_{ \O_{\bar g_{\mathfrak{sh}}}} \zeta p \phi \,dx =\int_{ \O_{\bar g_{\mathfrak{sh}}}} 2\chi_E(\yy_\mathfrak{sh}-y_d)\phi \,dx.$$Since $p \in H^1_0(\O_{\bar g_{\mathfrak{sh}}})$, cf.\,\eqref{pho1}, this shows \eqref{adj_s0}. Note that the $H^2$ regularity of $p$ is a consequence of the fact that $\ogs \in \OO$ is a domain of class  $C^2$, see Lemma \ref{fs}.\ref{class}.

(ii) \textit{{Clarke subdifferential}.} Next we want to prove \eqref{clarke0_s}. From \cite[Thm.\,7.3.2b, Prop.\,7.3.9d), Thm.\,7.3.12]{schirotzek}
and 
\eqref{clarke0_s0} we deduce that
\begin{equation}\label{intt}
\int_D \zeta_\e(x)v\phi(x)\,dx \leq  \int_D \beta^\circ(\yy_\e(x);v)\phi(x)\,dx
\end{equation}
for all $v\in \R$ and $\phi \in C_c^\infty(D), \phi \geq 0.$
Moreover, 
\begin{equation}\label{betta}
\beta^\circ(z;v) =\left\{
\begin{aligned}
\beta'(z)(v) &\quad \text{if }\beta \text{ is continuously differentiable at } z,
 \\   \beta'(z;v) &\quad \text{if }\beta \text{ is convex around } z,
\\-\beta'(z;-v) &\quad \text{if }\beta \text{ is concave around } z
\end{aligned}\right.\end{equation}for all $z,v \in \R$, see \cite[Thm.\,7.3.2b]{schirotzek}. In step (I) of the proof, we have shown that \eqref{bet} is true with $M>0$  independent of $\eps$. Hence, due to \eqref{betta}, the integrand on the right hand side  in \eqref{intt} is uniformly bounded. This permits us to apply the generalized Fatou's Lemma (see e.g.\,\cite[p.\,151]{elstrodt}), from which we  infer
\[
  \limsup_{\e \to 0} \int_D \beta^\circ(\yy_\e(x);v)\phi(x)\,dx
\leq   \int_D \limsup_{\e \to 0} \beta^\circ(\yy_\e(x);v)\phi(x)\,dx.
\]
By \eqref{conv_xi} and  the upper semicontinuity of $ \beta^\circ(\cdot;\cdot)$ \cite[Prop.\,7.3.8a]{schirotzek}  in combination with \eqref{conv_y_e}, we then arrive at
\begin{equation}\label{zeta0}
\begin{aligned}
\int_D \zeta(x)v\phi(x)\,dx =
\lim_{\e \to 0} \int_D \zeta_\e(x)v\phi(x)\,dx &\leq  \limsup_{\e \to 0} \int_D \beta^\circ(\yy_\e(x);v)\phi(x)\,dx
\\&\leq   \int_D \limsup_{\e \to 0} \beta^\circ(\yy_\e(x);v)\phi(x)\,dx
\\&\leq   \int_D  \beta^\circ(\yy_\mathfrak{sh}(x);v)\phi(x)\,dx.
\end{aligned}\end{equation}
Now, the fundamental lemma of calculus of variations and \eqref{betta} yield \eqref{clarke0_s}. 
Note that, in light of Assumption \ref{assu:stand}.\ref{it:stand2},  we can deduce from here the non-negativity of $\zeta$.

(iii) \textit{Gradient equation.}
Next we focus on establishing  \eqref{grad}. According to Lemma \ref{lem:tool},  \eqref{grad_f0_s} implies  \eqref{cone} and \eqref{eq_d-e}. In view of $\bar g_\e \leq 0 $ a.e.\ in $E$ and \eqref{reg_h'}, 
letting $\e \searrow 0$ in \eqref{cone} yields  $0\leq 0$, cf.\ \eqref{conv_g_e} and \eqref{pw}. Hence we only need to pass to the limit in \eqref{eq_d-e}, which reads 
  \begin{equation}\label{eq_d-ee}
  ( p_\e[\eps-\frac{1}{\eps}H_\eps'(\bar g_\e)\bar y_\e]-\alpha H_\eps'( \bar g_\e),\phi)_{L^2(D\setminus E)}+(\bar g_\e-\bar g_{\mathfrak{sh}},\phi)_{L^2(D \setminus \bar E)}+(\bar g_\e-\bar g_{\mathfrak{sh}},\phi)_{\hde}= 0  \end{equation}
for all $\phi \in H^s(D \setminus \bar E )$.

%
%

We start by analyzing   the third term in \eqref{eq_d-ee}.
As a result of \eqref{reg_h'}, it holds   \begin{equation}\label{eq2}
  H_\eps'(v)=\frac{1}{\eps}\Psi (\frac{v}{\e}),\end{equation} where   \begin{equation}\label{psi}
\Psi(v):=\left\{\begin{aligned}0,&\quad \text{if }v\leq 0,
\\-6v^2+6v,&\quad \text{if }v\in (0,1),
\\0,&\quad \text{if }v\geq 1.
\end{aligned}\right.\end{equation}Note that $\Psi:\R \to [0,3/2]$ is a continuous function with
\begin{equation}\label{int_psi}
\int_{0}^{1}\Psi(z)\,dz=1.\end{equation}

 Let $\e>0$ be arbitrary but fixed and small enough so that \eqref{eq1} is true. We make use of the fact that $H_\e'$ vanishes outside the interval $[0,\eps]$ (cf.\,\eqref{reg_h'}) and \eqref{eq1}, which allow us to apply the co-area formula, see for instance \cite[Prop.\,3, Sec.\,3.4.2]{eg}. In light of \eqref{eq2}, we  obtain
 \begin{equation}\label{he'ge}
 \begin{aligned}
( H_\eps'( \gbe),\phi)_{L^2(D \setminus \bar E)}&=\int_{\R^2} H_\eps'( \gbe)\chi_{D \setminus \bar E} \phi  \,dx
\\&=\int_0^\e \int_{\gbe=t} \frac{H_\eps'( \gbe)\chi_{D \setminus \bar E}  \phi}{|\nabla \gbe|}\,d \xi \,dt
\\&=\int_0^\e \frac{1}{\eps} \Psi (\frac{t}{\e})  \int_{\gbe=t} \frac{\chi_{D \setminus \bar E} \phi}{|\nabla \gbe|}\,d \xi \,dt
\\&=\int_0^1  \Psi (z)  \underbrace{\int_{\gbe= \e z} \frac{\chi_{D \setminus \bar E} \phi}{|\nabla \gbe|}\,d \xi}_{=:\eta_\e(z)} \,d z.
 \end{aligned}
  \end{equation}
Next we address  the pointwise convergence of the sequence $\{\eta_\e\} \subset C([0,1];\R).$ Note that \[\eta_\e:[0,1]\ni z \mapsto  \int_{\gbe=\e z } \frac{\chi_{D \setminus \bar E} \phi}{|\nabla \gbe|}\,d \xi \in \R\] is indeed continuous; this follows by arguing in the exact same way as in the proof of Proposition \ref{int_conv}. 
We point out that in the definition of $\eta_\e$ we build the integral over the entire level set $\{x \in \dme:\gbe(x)=\e z\}$; recall that this is a finite union of closed disjoint curves without self intersections (Proposition \ref{prop:per}).
By Corollary \ref{int_conv0}, one has
\[ \eta_\e(z) \to \int_{\gbs=0} \frac{\chi_{D \setminus \bar E}  \phi}{|\nabla \gbs|}\,d \xi  \quad \text{as }\e \to 0,\  \forall\,z \in [0,1].\]
Thanks to \eqref{bound_h} and since $\Psi$ is continuous, see \eqref{psi}, we conclude  \begin{equation}\label{iiia0}   \int_0^1  \Psi (z)  \eta_\e(z) \,d z   \overset{ \e \searrow 0}{\to} \int_0^1  \Psi (z) \,d z \int_{\gbs=0} \frac{\chi_{D \setminus \bar E}  \phi}{|\nabla \gbs|}\,d \xi = \int_{\gbs=0} \frac{\chi_{D \setminus \bar E}  \phi}{|\nabla \gbs|}\,d \xi.\end{equation}
Note that the above identity is due to \eqref{int_psi}. Therefore, by \eqref{he'ge}, we immediately get
 \begin{equation}\label{eq_d-ee0}
  (H_\eps'(\bar g_\e),\phi)_{L^2(D\setminus E)}
  \to  \int_{\gbs=0} \frac{\chi_{D \setminus \bar E}  \phi}{|\nabla \gbs|}\,d \xi =\delta_{\gbs}(\phi) \ee  \end{equation}
for all $\phi \in H^s(D \setminus \bar E )$; see \eqref{delta} and recall that $\gbsc \subset \dme$, thanks to Assumption \ref{assu:d} and $\gbs \in \FF_{\mathfrak{sh}},$ cf.\,\eqref{f_s}.
Finally, letting $\e \searrow 0$ in  \eqref{eq_d-ee}, where we use \eqref{pw}, \eqref{conv_g_e} and \eqref{eq_d-ee0}, results in 
 \begin{equation}\label{eq_d-ee00}
  ( -\frac{1}{\eps}H_\eps'(\bar g_\e)p_\e \bar y_\e,\phi)_{L^2(D\setminus E)}
  \to \alpha \int_{\gbs=0} \frac{\chi_{D \setminus \bar E}  \phi}{|\nabla \gbs|}\,d \xi =\alpha \delta_{\gbs}(\phi) \ee  \end{equation}
for all $\phi \in H^s(D \setminus \bar E )$.
The desired inclusion \eqref{grad} is a consequence of  \eqref{eq_d-ee0}, \eqref{conv_y_e}, \eqref{pw}, and the definition of $\QQ,$ see \eqref{q}. We note that this kind of definition can be found  in \cite{bt}.

(iv) \textit{Sign condition for the state and adjoint state.}  By standard arguments \cite{troe}, we see that the equation 
\begin{equation}\label{eq_yde}
  \begin{aligned}
   -\laplace \psi + \beta(\psi)+\frac{1}{\e}H_\e(\gbe)\psi&=\chi_E(-\laplace y_d+\beta(y_d))+\chi_{D \setminus E}\beta(0)+\e\gbe \quad \text{a.e.\,in }D,
   \\\psi&=0\quad \text{on } \partial D,
 \end{aligned}
\end{equation}admits a unique solution $y_{d,\e} \in H_0^1(D) \cap H^2(D);$ cf.\,also the proof of \cite[Lem.\,4.5]{p1}.  Moreover, by using the monotony of $\beta$, the non-negativity of $H_\e$, and \eqref{conv_g_e}, one has 
\[\|y_{d,\e}\|_{H_0^1(D)}\leq c,\]where $c>0$ is independent of $\e.$
Hence, we can extract a subsequence so that \[y_{d,\e}\weakly \widetilde y_d \quad \text{ in }H_0^1(D) \ee.\] By arguing as in the step (II).(i) of the proof, we deduce that
 $\widetilde y_d$ solves 
\begin{equation}\label{eq_yd00}
  \begin{aligned}
   -\laplace \psi + \beta(\psi)&=\chi_E(-\laplace y_d+\beta(y_d))+\chi_{\O_{\gbs} \setminus E}\beta(0) \quad \text{a.e.\,in }\O_{\gbs},
   \\\psi&=0\quad \text{on } \partial \O_{\gbs},
 \end{aligned}
\end{equation}
Thus, $\widetilde y_d=y_{d,0},$ where $y_{d,0} \in H_0^1(D)$ is the extension by zero of $y_d$ and we have 
\begin{equation}\label{yde}
y_{d,\e}\weakly  y_{d,0}  \quad \text{ in }H_0^1(D) \ee.\end{equation}
Let us suppose that \eqref{f01} is true in the standing Assumption \ref{yd_f0}. By comparing \eqref{eq_yde} and the state equation associated to $\gbe$, we see that
\[\yy_\e \geq y_{d,\e} \quad \ae  D.\]
Letting $\e \searrow 0$ in the above inequality, where we employ \eqref{conv_y_e} and \eqref{yde}, results in 
 \[\ybs \geq y_{d,0} \quad \ae  D.\]
A comparison principle employed in \eqref{adj_s0} along with the fact that $\zeta \geq 0$ then yields 
\eqref{p>=0}. The assertion \eqref{p<=0} follows by the exact same arguments. This completes the proof.
\end{proof}

\begin{remark}\label{rem:conv}
When it comes to the converging term in \eqref{eq_d-ee0}, the existing literature only seems to address  the special case $\gbe=\gbs$; this may be approached in at least three alternative ways: by means of distribution theory, by employing the divergence theorem or 
in the same way we approached it \cite[Prop.\,2.2]{cm}. However, when the Dirac sequence  $\{H_\e'\}$ acts on a mapping that depends on $\e$ as well, in our case $\gbe$, the aforementioned methods fail. It seems necessary to prove that the approximating curves associated to $\gbe$ (called $\gbec$ in the previous section) have the same regularity properties as their limit $\gbsc$ (Proposition \ref{prop:per}). The purpose of the entire section \ref{sec:grad} was to gain insight into their convergence behaviour. The findings there ultimately led to the essential Corollary \ref{int_conv0}, 
which, together with the co-area formula, is the key tool for the passage to the limit  \eqref{eq_d-ee0}.
\end{remark}
\begin{remark}\label{rem:p}
Notice that in the proof of Theorem \ref{thmm} we did not pass to the limit in the strong stationary system from Theorem \ref{thm}, but in the one from Lemma \ref{lem:tool}. The reason for this is that there is no need to make use of the sign condition  \eqref{pe} that comes along with strong stationarity. If we do this, the respective limit equation reads
\begin{equation}\label{eq:pp} 
p\leq 0 \quad \text{a.e.\,in }\widetilde D_n,
\end{equation}
where  $\widetilde D_n$ is a measurable subset of  $\{x \in D: \yy_\mathfrak{sh} (x) \in \NN\}.$
This assertion is however more or less void, as $\widetilde D_n$ may have measure zero (e.g.\,if $\NN=\{z\}$ and $\{\yy_\mathfrak{sh}=z\}$ has positive measure while $\yy_\e \not = z$ a.e.\,in $D$ and for all $\e$).
However, as it turns out, this passage to the limit is not necessary at all. When we work under the assumptions from Theorem \ref{thm}, the second alternative requirement \eqref{f00} in the standing Assumption \ref{yd_f0} must hold true  (Remark \ref{rem}), that is, we arrive at \eqref{p<=0}. This already contains the sign condition for $p$ that 
 one would naively expect to get when $\e \searrow 0.$
\end{remark}

Now, we  return to our  non-smooth shape optimization problem which we recall here:

\begin{equation}\tag{$P_\O$}
 \left.
 \begin{aligned}
  \min_{\O \in \OO, E \subset \O} \quad & \int_E (y_{\O}(x)-y_d(x))^2 \; dx+\alpha\,\int_{\O}\,dx, 
\\     \text{s.t.} \quad & 
  \begin{aligned}[t]
    -\laplace y_{\O} + \beta(y_{\O})&=f \quad \text{a.e.\,in }\O,
   \\y_{\O}&=0  \quad \text{on } \partial \O.\end{aligned} \end{aligned}
 \quad \right\}
\end{equation}
The correlation between the optimal shapes of \eqref{p_sh} and the global minimizers of \eqref{p_shh} is contained in the following 
\begin{proposition}[{\cite[Prop.\,2.11]{p1}}]\label{rem:equiv}
Let  $\O^\star \in \OO$ be an optimal shape of \eqref{p_sh}. Then, each of the  functions  $g^\star \in \FF_{\mathfrak{sh}}$ that satisfy $\O_{g^\star}=\O^\star$ is a global minimizer of \eqref{p_shh}. Conversely, if $g^\star \in \FF_{\mathfrak{sh}}$ minimizes \eqref{p_shh}, then the component of $\O_{g^\star}$ that contains $E$ is an optimal shape for \eqref{p_sh}.
\end{proposition}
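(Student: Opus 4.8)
\emph{Proof proposal.} Everything rests on a single comparison between the two reduced cost functionals. Write $F(\O):=\int_E (y_\O-y_d)^2\,dx+\alpha\int_\O dx$ for the reduced objective of \eqref{p_sh}, with $y_\O$ the state on $\O$, and recall $\JJ$ from Definition \ref{def_sh}. For $g\in\FF_{\ss}$ let $\O_g^E$ denote the component of $\O_g$ containing $E$; by \eqref{o} we have $\O_g^E\in\OO$, and conversely every element of $\OO$ arises this way. The plan is to first prove the single inequality
\[
\JJ(g)\ \geq\ F(\O_g^E)\qquad\forall\,g\in\FF_{\ss},
\]
with equality precisely when $\O_g=\O_g^E$, and then to read off both implications from it.

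I would establish this termwise. For the tracking term, $\SS(g)$ solves the state equation on $\O_g^E\supseteq E$ (Definition \ref{S}), so by uniqueness $\SS(g)=y_{\O_g^E}$ on $\O_g^E$, hence on $E$, and the two tracking integrals coincide. For the volume term, $1-H(g)=\chi_{\{g\leq 0\}}$, while Lemma \ref{fs}.\ref{class2} gives $\O_g=\{g<0\}$ and Lemma \ref{fs}.\ref{ls} gives $\mu\{g=0\}=0$; thus $\int_D(1-H(g))\,dx=|\O_g|\geq|\O_g^E|$, since $\alpha\geq 0$. As $\O_g\setminus\O_g^E$ is open, it is null iff it is empty, which pins down the equality case $\O_g=\O_g^E$. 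Summing the two terms yields the displayed inequality.

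For the \textbf{forward} implication, let $\O^\star$ be optimal for \eqref{p_sh} and $g^\star\in\FF_{\ss}$ with $\O_{g^\star}=\O^\star$. Since $\O^\star$ is connected and contains $E$, we get $\O_{g^\star}^E=\O^\star$, so the equality case applies and $\JJ(g^\star)=F(\O^\star)$. For any $g\in\FF_{\ss}$, the inequality together with optimality of $\O^\star$ over $\OO$ (note $\O_g^E\in\OO$) gives $\JJ(g)\geq F(\O_g^E)\geq F(\O^\star)=\JJ(g^\star)$, so $g^\star$ minimizes \eqref{p_shh} globally. For the \textbf{converse}, let $g^\star$ minimize \eqref{p_shh}; I want $F(\O_{g^\star}^E)\leq F(\O)$ for every $\O\in\OO$. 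The inequality gives $F(\O_{g^\star}^E)\leq\JJ(g^\star)$, and global minimality gives $\JJ(g^\star)\leq\JJ(h)$ for any competitor $h$; hence it suffices to produce, for each $\O\in\OO$, some $h\in\FF_{\ss}$ with $\O_h=\O$, for then the equality case gives $\JJ(h)=F(\O)$ and the chain closes. The same construction also shows that the functions $g^\star$ considered in the forward part actually exist, so that statement is not vacuous.

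That construction is the \emph{only substantial point}, and I expect it to be the main obstacle. Given $\O\in\OO$, Lemma \ref{fs}.\ref{class} and \cite{pen} tell us that $\O$ is a $C^2$ domain whose boundary is a finite union of disjoint closed $C^2$ curves not meeting $\partial D$, with $E\subset\O$ and $\dist(\clos E,\partial D)>0$. I would take $h$ to be a smoothed signed-distance defining function: $h=-\dist(\cdot,\partial\O)$ just inside $\partial\O$ and $h=+\dist(\cdot,\partial\O)$ just outside (both $C^2$ there because $\partial\O$ is $C^2$, so $|\nabla h|=1$ on $\partial\O$), extended to a $C^2$ function on $\bar D$ that remains negative throughout $\O$ and positive on $D\setminus\clos\O$, in particular near $\partial D$. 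Then $|\nabla h|+|h|>0$ in $D$, one has $h>0$ on $\partial D$ and off $\clos{\O_h}=\clos\O$, and $h\leq 0$ on $E$ since $E\subset\O$; thus $h\in\FF_{\ss}$ and $\O_h=\interior\{h\leq 0\}=\O$ with no spurious components. The remaining verifications (insensitivity of the tracking and volume terms to extra components, and that an open null set is empty) are routine once the comparison inequality is in hand.
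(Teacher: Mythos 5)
The paper never proves this proposition in-text---it is imported verbatim from \cite[Prop.\,2.11]{p1}---so there is no argument here to compare against line by line; judged on its own terms, your proof is correct and essentially complete. The comparison inequality $\JJ(g)\geq F(\O_g^E)$ (in your notation) is sound: the tracking terms agree because Definition \ref{S} evaluates the state on the component of $\O_g$ containing $E$, and the volume terms compare via $1-H(g)=\chi_{\{g\leq 0\}}$ together with Lemma \ref{fs}.\ref{class2} and Lemma \ref{fs}.\ref{ls}, your ``an open null set is empty'' observation correctly handling the extra components. You also correctly isolated the one substantial ingredient, which the bare definition \eqref{o} does not supply: given $\O\in\OO$ one only knows $\O=\O_g^E$ for some $g$, possibly with $\O_g\supsetneq\O$, so the converse direction genuinely requires a defining function $h\in\FF_{\ss}$ with $\O_h=\O$ \emph{exactly}. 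Your smoothed signed-distance construction delivers this: since $\partial\O$ is a finite union of disjoint closed $C^2$ curves meeting neither $\partial D$ nor each other (Lemma \ref{fs}.\ref{class}, \cite[Prop.\,2]{pen}), the signed distance $d$ is $C^2$ in a tubular strip, and one should make the cap-off explicit, e.g.\ $h=\theta(d)$ with $\theta\in C^2(\R)$ nondecreasing, $\theta(t)=t$ near $0$ and $\theta\equiv\pm c$ outside the strip; then $h\in C^2(\bar D)$ globally, the nondegeneracy $|\nabla h|+|h|>0$ on $D$ is transparent (wherever $\theta'(d)=0$ one has $h\neq 0$), $\interior\{h\leq 0\}=\O$ including around any holes of $\O$, and the remaining conditions of \eqref{f_s} hold since $h>0$ off $\clos{\O}$ and on $\partial D$ (the latter using $\dist(\partial\O,\partial D)>0$). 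Two cosmetic corrections: your claim that equality in $\JJ(g)\geq F(\O_g^E)$ holds \emph{precisely} when $\O_g=\O_g^E$ is false for $\alpha=0$, where equality always holds; since both implications only ever invoke the sufficiency direction, reword it as ``with equality whenever $\O_g=\O_g^E$''. And in the forward direction it is worth stating explicitly that $\O^\star$, being by \eqref{o} a component of some $\O_g$, is connected and contains $E$, which is exactly what makes $\O_{g^\star}^E=\O^\star$ and triggers that equality case.
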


Thus, we may transfer the result in Theorem \ref{thmm}  to the shape optimization problem \eqref{p_sh}. In view of Proposition \ref{rem:equiv}, the optimality system associated to an optimal shape of \eqref{p_sh} is given by

\begin{theorem}[Optimality system for the optimal shape]\label{thmm0}
Let  $\O^\star \in \OO$ be an optimal shape of \eqref{p_sh} with associated state $y_{\O^\star}\in H^1_0(\O^\star) \cap H^2(\O^\star)$.  If {Assumptions \ref{gbs} and  \ref{assu:d} hold  true}, then there exists an adjoint state $p_{\O^\star}  \in  H^1_0(\O^\star)\cap H^2(\O^\star)$ and a multiplier $\zeta \in \li$ such that 
\begin{subequations} \label{eq:thmm00}  \begin{gather}
-\laplace p_{\O^\star}+\zeta p_{\O^\star}=2\chi_E(y_{\O^\star}-y_d) \text{ a.e.\ in }\O^\star, \quad p_{\O^\star}=0 \text{ on }\partial \O^\star, \label{adj_s00}
\\\zeta(x) \in [\min\{\beta_-'(y_{\O^\star}(x)),\beta_+'(y_{\O^\star}(x))\},\max\{\beta_-'(y_{\O^\star}(x)),\beta_+'(y_{\O^\star}(x))\}] \text{ a.e.\,in } {\O^\star},\label{clarke_s0}
\\\zeta(x) \in [\min\{\beta_-'(0),\beta_+'(0)\},\max\{\beta_-'(0),\beta_+'(0)\}] \text{ a.e.\,in } D \setminus {\O^\star},\label{clarke_s0'}
\\{\alpha \mu_{\partial \O^\star} \in \widetilde{\mathcal{Q}}(\partial \O^\star,y_{\O^\star},p_{\O^\star})},\label{grad0} 
\end{gather}\end{subequations}
where $ \mu_{\partial \O^\star} \in C(\bar  D \setminus  E)^\star$ is defined as 
   \begin{equation}\label{delta0}
 \mu_{\partial \O^\star}(\phi):=    \int_{\partial \O^\star} \frac{  \phi}{{|\nabla \gbs|}}\,d \xi \quad \forall\,\phi \in C(\bar  D \setminus  E)\end{equation}
and 
 \begin{equation}\label{q0}
 \begin{aligned}
 \mathcal Q(\partial \O^\star,y_{\O^\star},p_{\O^\star}) :=\{&w \in C(\bar  D \setminus  E)^\star: -\frac{1}{\e}\mathfrak h_\e  \widetilde y_\e \widetilde p_\e\weakly w \text{ in }H^s( D \setminus \bar E)^\star, \\&\text{ where } \mathfrak h_\e\weakly   \mu_{\partial \O^\star} \  \text{in }H^s( D \setminus \bar E)^\star,\\& \quad  \widetilde y_\e \weakly y_{\O^\star} \  \text{in }H_0^1(D),\ \widetilde p_\e \weakly p_{\O^\star}\  \text{in }H_0^1(D) \}.\end{aligned}
 \end{equation}
Moreover, if \eqref{f01} is true in the standing Assumption \ref{yd_f0}, then
\begin{equation}\label{p>=00}
y_{\O^\star} \geq y_d \text{ in }E, \quad y_{\O^\star} \geq 0 \text{ in }\O_{\gbs} \setminus E, \quad p_{\O^\star} \geq 0  \text{ in }\O_{\gbs}.
\end{equation}Otherwise, i.e.,
if  \eqref{f00} is true, then
\begin{equation}\label{p<=00}
y_{\O^\star} \leq y_d \text{ in }E, \quad y_{\O^\star} \leq 0 \text{ in }\O_{\gbs} \setminus E, \quad p_{\O^\star} \leq 0   \text{ in }\O_{\gbs}.
\end{equation}
\end{theorem}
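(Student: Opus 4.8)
The plan is to read Theorem~\ref{thmm0} as a pure translation of Theorem~\ref{thmm} through the dictionary supplied by Proposition~\ref{rem:equiv}; there is no fresh limit analysis to carry out, only a careful identification of the objects living on the variable domain. First I would invoke Proposition~\ref{rem:equiv}: since $\O^\star \in \OO$ is an optimal shape of \eqref{p_sh}, any $g^\star \in \FF_{\mathfrak{sh}}$ whose relevant component $\O_{g^\star}$ equals $\O^\star$ is a \emph{global} minimizer of \eqref{p_shh}. A global minimizer is, \emph{a fortiori}, a local minimizer in the $\ld$ sense of Definition~\ref{def_sh}, and Assumptions~\ref{gbs} and~\ref{assu:d} are exactly the standing hypotheses under which Theorem~\ref{thmm} was established; hence Theorem~\ref{thmm} applies verbatim with $\gbs:=g^\star$. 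I would then define $y_{\O^\star}:=\ybs=\SS(g^\star)$ and $p_{\O^\star}:=p$, both regarded as the zero-extended functions supported on the component $\O^\star$ of $\O_{g^\star}$ containing $E$, consistently with Definition~\ref{S} and the regularity \eqref{pho1} obtained in the proof of Theorem~\ref{thmm}; the $H^2$-regularity on $\O^\star$ is inherited from the corresponding statements there.

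With this relabeling, the adjoint equation \eqref{adj_s0} becomes \eqref{adj_s00} and the sign conditions \eqref{p>=0}, \eqref{p<=0} become \eqref{p>=00}, \eqref{p<=00} at once, since $\O_{\gbs}=\O^\star$. The gradient inclusion \eqref{grad} transfers to \eqref{grad0} once the two measures and the two multiplier sets are matched: by Lemma~\ref{fs}.\ref{class2} the level set $\{g^\star=0\}$ equals $\partial\O_{g^\star}$, and under the abuse of notation fixed after Proposition~\ref{prop:per} — which restricts all boundary integrals to the components bounding the relevant $\O^\star$ — the functional $\delta_{\gbs}$ of \eqref{delta} is literally the functional $\mu_{\partial\O^\star}$ of \eqref{delta0} (the denominators agree because $\gbs=g^\star$). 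Consequently the set $\mathcal Q(\gbs,\ybs,p)$ of \eqref{q} coincides with the set in \eqref{q0} evaluated at $(\partial\O^\star,y_{\O^\star},p_{\O^\star})$, and \eqref{grad} turns into \eqref{grad0}.

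The one step demanding a genuine, if elementary, argument is the splitting of the Clarke inclusion. In Theorem~\ref{thmm} the relation \eqref{clarke0_s} holds a.e.\ on the whole of $D$, with $\ybs$ understood as the extension by zero of the state on $\O^\star$. Restricted to $\O^\star$, where $\ybs=y_{\O^\star}$, this is precisely \eqref{clarke_s0}. On $D\setminus\O^\star$ — which gathers the exterior $D\setminus\O_{g^\star}$ together with any further components of $\O_{g^\star}$ on which $\SS(g^\star)$ vanishes — one has $\ybs\equiv 0$, so evaluating the interval in \eqref{clarke0_s} at the argument $0$ yields exactly \eqref{clarke_s0'}. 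I expect this to be the main hurdle, since it hinges on correctly handling the multi-component structure of $\O_{g^\star}$: the state and adjoint are supported only on the component containing $E$, so the vanishing of $\ybs$ off $\O^\star$ must be deduced from $\ybs\in H^1_0(\O^\star)$ rather than from any behaviour of $g^\star$ on the complement. Once this pointwise splitting is in hand, all four relations \eqref{adj_s00}--\eqref{grad0} together with the sign conditions are in place, completing the transfer and hence the proof.
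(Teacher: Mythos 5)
Your proposal is correct and takes essentially the same route as the paper, which obtains Theorem \ref{thmm0} precisely by combining Proposition \ref{rem:equiv} (parametrizations of an optimal shape are global, hence local, minimizers of \eqref{p_shh}) with Theorem \ref{thmm} and relabelling $\gbs=g^\star$, $\ybs=y_{\O^\star}$, $p=p_{\O^\star}$, $\delta_{\gbs}=\mu_{\partial\O^\star}$. Your splitting of the Clarke inclusion \eqref{clarke0_s} into \eqref{clarke_s0} on $\O^\star$ and \eqref{clarke_s0'} on $D\setminus\O^\star$, deduced from the vanishing of the zero-extended state off the component containing $E$, is exactly the intended reading of the transfer.
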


\begin{remark}Let us give some comments regarding the necessary optimality conditions from Theorem \ref{thmm0}.

Clearly, \eqref{adj_s00} is the classical adjoint equation one would get in the smooth case too, see e.g.\, \cite{ks_diss}. The relations \eqref{clarke_s0}-\eqref{clarke_s0'} are the strongest one could expect, given the fact that  they are obtained by an approximation procedure; note that these are equivalent to 
\[\zeta(x) \in \partial_{\circ} \beta(y_{\O^\star}(x)) \quad \ae D,\]where $ \partial_{\circ} \beta$ stands for the Clarke subdifferential; cf.\,\cite[Thm.\,7.3.12]{schirotzek} and Assumption \ref{assu:reg0}. The sign condition for the adjoint state in \eqref{p<=00} is reasonable and entails more information than expected. For more details with respect to the relations \eqref{p>=00}-\eqref{p<=00}, see Remark \ref{rem:p}.


Concerning \eqref{grad0}, we remark that $ \mu_{\partial \O^\star}$ is a finite regular  measure concentrated on $\partial \O^\star$. Looking at \eqref{ham}, we see that the denominator appearing in the integral in \eqref{delta0}, i.e., $\nabla \gbs$, has the following geometrical significance: it describes the speed along the curve $\partial \O^\star$, see for instance \cite[Def.\,1.2.3]{pressley}.
Unfortunately, we were not able to obtain a more concrete relation in \eqref{grad0}. This is because of  the presence of the term $\frac{1}{\eps}$ and the lack of uniform  convergence of the sequences $\{\yy_\e\}$  and $\{p_\e\}$, which did not allow us 
to obtain a limit for the sequence $\{ -\frac{1}{\eps}H_\eps'(\bar g_\e)p_\e \bar y_\e\}.$ Recall that \eqref{eq_d-ee00} is  just a consequence of the fact that all the other  terms in \eqref{eq_d-ee} converge towards known values.
Looking at optimality systems from the literature \cite[Sec.\,3.3-3.4]{ks_diss}, we think that \[( -\frac{1}{\eps}H_\eps'(\bar g_\e)p_\e \bar y_\e,\phi)_{L^2(D\setminus E)}\to \int_{\gbsc} \nabla \ybs \nabla p   \frac{ \phi }{|\nabla \gbs|}\,d \xi \quad \forall\,\phi \in H^s(D \setminus \bar E) \ee,\] so that, in view of \eqref{eq_d-ee0}, \eqref{pw} and \eqref{conv_g_e}, the inclusion \eqref{grad0} is replaced by 
\begin{equation}\label{..}
\alpha \mu_{\partial \O^\star}+\int_{\partial \O^\star}  \nabla y_{ \O^\star} \nabla p_{ \O^\star}   \frac{ \phi }{|\nabla \gbs|}\,d \xi=0.\end{equation}  Then, 
in the  case that $\beta$ is a differentiable mapping, \eqref{eq:thmm00} (with \eqref{..} instead of \eqref{grad0}) would correspond to the optimality system associated to the optimal shape of a shape optimization problem governed by a smooth PDE \cite[Sec.\,3.4]{ks_diss}. 
\end{remark}

\begin{remark}
To the best of our knowledge, \eqref{eq:thmm00} is the first optimality system obtained for the control (optimal shape) of a  shape optimization problem with non-smooth PDE constraints. For  optimal design problems governed by smooth  PDEs optimality systems were derived in  \cite{oc_t} in the linear case, though more general situations can be considered there, as long as the state equation preserves its differentiability properties. This contribution deals with the more difficult case of Neumann boundary conditions and employs the functional variational approach. The shape optimization problem from \cite{oc_t}  is tackled in a direct manner, without resorting to approximating control problems such as \eqref{p10}. We point out  that this way of handling \eqref{p_sh} does not work in the present paper because of the lack of smoothness. There are of course other contributions dealing with optimality systems for the control of shape optimization problems where the state equation is a smooth PDE, see for instance \cite[Sec 3.3]{ks_diss}, \cite{k_sturm, kun, kk} and the references therein. In these works, the variational approach is purely  geometrical  and the aim is often to compute the shape derivative of the objective without the need of investigating the differentiability of the control to state map.\end{remark}

\section*{Acknowledgment}
This work was supported by the DFG grant BE 7178/3-1 for the project "Optimal Control of Viscous
Fatigue Damage Models for Brittle Materials: Optimality Systems".  
\\The author would like to thank Dr.\,Dan Tiba (IMAR Bucharest) for various fruitful discussions on the topic of shape optimization.
\bibliographystyle{plain}
\bibliography{strong_stat_coupled_pde}

\end{document}